\pdfoutput=1
\documentclass[11pt,a4paper,reqno]{amsart}
\usepackage[includehead,includefoot,margin=25mm]{geometry}
\usepackage{times}
\usepackage{amsfonts} %
\usepackage{amsmath} %
\usepackage{amssymb} %
\usepackage{amsthm} %
\usepackage{enumerate} %
\usepackage{bbm} %
\usepackage{graphicx} %
\usepackage{nicefrac} %
\usepackage{color} %
\usepackage{hyperref}
\usepackage{tikz-cd}
\usepackage{mathrsfs}

\allowdisplaybreaks

\usepackage{graphicx} 
\usepackage{tikz}

\usetikzlibrary{circuits.logic.US,circuits.logic.IEC,fit}

\hypersetup{
	colorlinks=true,       
	linkcolor=blue,          
	citecolor=blue,        
	filecolor=blue,      
	urlcolor=blue           
}

\definecolor{zzqqtt}{rgb}{0.6,0.,0.2}
\definecolor{qqttcc}{rgb}{0.,0.2,0.8}
\definecolor{zzttqq}{rgb}{0.6,0.2,0.}
\definecolor{wwccff}{rgb}{0.4,0.8,1.}

\usepackage[all]{xy}
\usepackage{calrsfs} 

\usepackage{imakeidx}
\makeindex


\theoremstyle{plain}
\newtheorem{thm}{Theorem}[section]
\theoremstyle{plain}
\newtheorem{conj}[thm]{Conjecture}
\newtheorem{lem}[thm]{Lemma}
\theoremstyle{definition}
\newtheorem{defi}[thm]{Definition}
\theoremstyle{definition}
\newtheorem{example}[thm]{Example}
\theoremstyle{plain}
\newtheorem{prop}[thm]{Proposition}
\theoremstyle{plain}
\newtheorem{cor}[thm]{Corollary}
\theoremstyle{remark}

\theoremstyle{remark}
\newtheorem{remark}[thm]{Remark}


\newcommand{\F}{\mathbb{F}}

\usepackage{scalerel,stackengine}
\stackMath
\newcommand\bbhat[1]{%
	\savestack{\tmpbox}{\stretchto{%
			\scaleto{%
				\scalerel*[\widthof{\ensuremath{#1}}]{\kern-.6pt\bigwedge\kern-.6pt}%
				{\rule[-\textheight/2]{1ex}{\textheight}}
			}{\textheight}%
		}{0.5ex}}%
	\stackon[1pt]{#1}{\tmpbox}%
}
\parskip 1ex

\newcommand{\Zet}{\operatorname{Z}}
\newcommand{\A}{\mathbb{A}}
\newcommand{\R}{\mathbf{R}}
\newcommand{\bS}{\mathbf{S}}

\newcommand{\PP}[0]{\ensuremath{\mathbb{P}}}
\newcommand{\CC}[0]{\ensuremath{\mathbb{C}}}
\newcommand{\ZZ}[0]{\ensuremath{\mathbb{Z}}}

\newcommand{\QQ}[0]{\ensuremath{\mathbb{Q}}}
\newcommand{\RR}[0]{\ensuremath{\mathbb{R}}}

\newcommand{\CO}[0]{\ensuremath{\mathcal{O}}}

\newcommand{\Pic}[0]{\ensuremath{\operatorname{Pic}}}

\newcommand{\FF}{\mathbb{F}}

\newcommand{\rk}{\operatorname{rk}}

\newcommand{\Div}{\operatorname{Div}}

\newcommand{\Val}{\operatorname{Val}}
\newcommand{\TT}{\mathbb{T}}

\usepackage[normalem]{ulem}
\usepackage{dynkin-diagrams}
\usepackage{multirow}


\makeatletter
\let\@wraptoccontribs\wraptoccontribs
\makeatother

\begin{document}
	\title[]{Counting rational points on Hirzebruch--Kleinschmidt varieties over global function fields}
	
 \author{Sebastián Herrero}
\address{Universidad de Santiago de Chile, Dept.~de Matem\'atica y Ciencia de la Computaci\'on, Av.~Libertador Bernardo O'Higgins 3363, Santiago, Chile, and ETH, Mathematics Dept., CH-8092, Z\"urich, Switzerland}
  \email{sebastian.herrero.m@gmail.com}
   
  \author{Tobías Martínez}
\address{Departamento de Matem\'aticas, Universidad T\'ecnica
  Fe\-de\-ri\-co San\-ta Ma\-r\'\i a, Av.~Espa\~na 1680, Valpara\'\i
  so, Chile}
\email{tobias.martinez@usm.cl}

\author{Pedro Montero}
\address{Departamento de Matem\'aticas, Universidad T\'ecnica
  Fe\-de\-ri\-co San\-ta Ma\-r\'\i a, Av.~Espa\~na 1680, Valpara\'\i so, Chile}  \email{pedro.montero@usm.cl}

\begin{abstract}
	Inspired by Bourqui's work on anticanonical height zeta functions on Hirzebruch surfaces, we study height zeta functions of split toric varieties with Picard rank 2 over global function fields, with respect to height functions associated with big metrized line bundles. We show that these varieties can be naturally decomposed into a finite disjoint union of subvarieties, where precise analytic properties of the corresponding height zeta functions can be given. As application, we obtain asymptotic formulas for the  number of rational points of large height on each subvariety, with explicit leading constants and controlled error terms.
	\end{abstract}

\subjclass[2020]{14G05, 11G50, 11M41 (primary), 14M25, 11G35 (secondary)}
 
	\maketitle




	
	\setcounter{tocdepth}{1}
	\tableofcontents
 
	\section{Introduction}
 In the papers~\cite{Fran/Man/Tsch89} and~\cite{Bat/Man90}, a program was initiated to understand the asymptotic behavior of the number of rational points of bounded height on Fano algebraic varieties over global fields.  In the case of number fields, a refined version of the original expectations is as follows (see, e.g., \cite[{\it Formule empirique} 5.1]{Peyre2003PHBTA} or \cite[Conjecture~6.3.1.5]{Arzhantsev_Derenthal_Hausen_Laface_2014CoxRings} for precise definitions).

\begin{conj}[Manin--Peyre]\label{conj MP}
Let $X$ be an almost Fano variety\footnote{Following~\cite[{\it D\'efinition}~3.1]{Peyre2003PHBTA}, an almost Fano variety is a smooth, projective, geometrically integral variety~$X$ defined over a field~$K$, with~$H^1(X, \CO_X) = H^2(X, \CO_X) = 0$, torsion-free geometric Picard group~$\Pic(X_{\overline{K}})$ and $-K_X$ big.} over
a number field $K$, with dense set of rational points $X(K)$, finitely generated effective cone~
$\Lambda_{\textup{eff}}(X_{\overline{K}})$ and trivial Brauer group $\operatorname{Br}(X_{\overline{K}})$. Let $H=H_{-K_X}$ be the anticanonical height function, and assume that there is an open subset $U$ of $X$ that is the complement
of the weakly accumulating subvarieties on $X$ with respect to $H$. Then, there is a constant $C>0$  such that
\begin{equation}\label{eq MP Conjecture}
N(U, H,B):=\#\{ P\in U(K):H(P)\leq B \} \sim C B(\log B)^{\rk \operatorname{Pic}(X)-1} \quad \text{as } B\rightarrow \infty.    
\end{equation}
Moreover, the leading constant is of the form
$$C=\alpha(X)\beta(X)\tau_H(X),$$
where
\begin{align*}
       \alpha(X)&:=\frac{1}{\left(\rk \Pic(X)-1\right)!}\int_{\Lambda_{\textup{eff}}(X)^\vee} e^{-\left \langle -K_X,\mathbf{y}\right \rangle }\textup{d}\mathbf{y},  \\
       \beta(X)&:=\#H^1(\textup{Gal}(\overline{K}/K), \Pic(X_{\overline{K}})), 
\end{align*}
and~$\tau_H(X)$ is the Tamagawa number of $X$ with respect to $H$ defined by Peyre in~\cite[Section~4]{Peyre2003PHBTA}.
\end{conj}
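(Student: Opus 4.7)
Since Conjecture \ref{conj MP} is famously open in general, what follows is not a proof but an account of the height--zeta--function strategy that underlies every presently known case of the conjecture, together with its principal obstructions. Define
\[
Z(U, H, s) := \sum_{P \in U(K)} H(P)^{-s},
\]
which converges absolutely for $\Re(s) \gg 0$. A Tauberian theorem of Landau type --- quantitatively strengthened in this setting by Chambert--Loir and Tschinkel --- reduces \eqref{eq MP Conjecture} to proving that $Z(U, H, s)$ extends meromorphically to some half-plane $\Re(s) > 1 - \delta$, has a single pole at $s = 1$ of order $b := \rk \Pic(X)$, and satisfies $\lim_{s \to 1}(s-1)^{b}\, Z(U, H, s) = C/(b-1)!$ with $C = \alpha(X)\beta(X)\tau_H(X)$.

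The first step is geometric: one must identify the abscissa of convergence with $1$ on a suitable open set, isolate the weakly accumulating subvarieties whose removal defines $U$, and unpack $\alpha(X)$ as an integral over the dual cone $\Lambda_{\textup{eff}}(X)^\vee$. The finite generation of $\Lambda_{\textup{eff}}(X_{\overline{K}})$ and the bigness of $-K_X$ enter here both to guarantee the convergence of that cone integral and to ensure that the expected main term has the shape predicted by Peyre.

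The core analytic step is the meromorphic continuation together with the identification of the leading coefficient. All known cases of Conjecture \ref{conj MP} exploit one of two types of auxiliary structure. The \emph{harmonic-analytic route} applies when $X$ admits a large group action: for toric varieties (Batyrev--Tschinkel), equivariant compactifications of unipotent or semisimple groups (Chambert--Loir--Tschinkel, Shalika--Takloo-Bighash--Tschinkel), and certain flag varieties (Franke--Manin--Tschinkel), one writes $Z$ as an adelic integral, decomposes it via Poisson summation on the character group of an adelic quotient, and shows that the trivial character contributes the main term while non-trivial characters contribute holomorphically in a strip $\Re(s) > 1 - \delta$; matching the local factors of the trivial-character term with Peyre's local Tamagawa measures collapses it into $\alpha(X)\tau_H(X)$. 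The \emph{universal-torsor route}, developed for arithmetic of del Pezzo and Châtelet surfaces (Salberger, Peyre, de la Bretèche, Browning, Derenthal, Heath-Brown and others), parametrises $U(K)$ by integral points on the universal torsor of $X$ and estimates those via lattice-point counting in an adelic region, with the factor $\beta(X)$ emerging as the number of isomorphism classes of twisted torsors.

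The principal obstacle is precisely this last step in complete generality: no mechanism is known for the required meromorphic continuation when $X$ lacks both a large group action and a sufficiently tractable Cox ring --- famously, Conjecture \ref{conj MP} remains open for smooth cubic surfaces. A secondary obstacle, once continuation is available, is the delicate matching of the leading coefficient with $\alpha(X)\beta(X)\tau_H(X)$; this identification is non-formal and historically required re-examining what the ``right'' constant should be (culminating in Peyre's refinement of Batyrev--Manin). The present paper implements the harmonic-analytic route in the split toric Picard-rank-$2$ setting over a global function field, where the torus action and the explicit combinatorics of the Cox ring make both the continuation and the matching of constants tractable.
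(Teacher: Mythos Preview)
The statement in question is a \emph{conjecture}, and the paper does not prove it; it is recorded in the introduction purely as background and motivation. You correctly recognise this and, rather than attempting a proof, give a well-informed survey of the height-zeta-function strategy and its two main incarnations (adelic harmonic analysis \`a la Batyrev--Tschinkel, and universal torsors). That is the appropriate response, and your summary of the Tauberian reduction and of the known obstructions is accurate.

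One small correction to your closing sentence: the present paper does \emph{not} implement the harmonic-analytic route (Poisson summation on an adelic character group) in the Batyrev--Tschinkel sense. Following Bourqui, it works instead by a direct, explicit computation: rational points in the good open subset are parametrised by affine coordinates (Lemma~\ref{lem:explicit_U}), the height is rewritten as the degree of an explicit divisor, and a M\"obius-inversion argument on~$\Div^+(K)$ expresses the height zeta function as a rational expression in shifts of~$\zeta_K$. This is closer in spirit to an elementary lattice-point count than to adelic Fourier analysis, and it is precisely the low Picard rank and the explicit Kleinschmidt equations that make this tractable. Note also that Conjecture~\ref{conj MP} as stated is over number fields; over global function fields the paper works with the variant formulation in terms of poles of the height zeta function (since, as explained after the conjecture, the asymptotic~\eqref{eq MP Conjecture} cannot hold verbatim when heights lie in~$q^{\ZZ}$).
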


\begin{remark}
In general, the union of all weakly accumulating subvarieties of~$X$ is not necessarily contained within a proper closed subset, and more general versions of the Manin--Peyre conjecture suggest that there exists a thin\footnote{Following \cite{Serre1989}, a subset $Z \subset X(K)$ is called \emph{thin} if it can be expressed as a finite union $Z=\bigcup_i \pi_i(Y_i(K))$ where each $\pi_i:Y_i \to X$ is a morphism that is generically finite onto its image and has no rational sections.
} subset~$Z\subset X(K)$ such that the asymptotic formula~\eqref{eq MP Conjecture} holds for the number of rational points in~$X(K)\setminus Z$ of height~$\leq B$ (see, e.g., \cite{LT17}, \cite{LR19} and references therein). However, for the purpose of this paper, the above formulation is sufficient.
\end{remark}

There is also a conjecture, originating in the work of Batyrev and Manin~\cite{Bat/Man90}, concerning the asymptotic growth of the number
$$N(U,H_L,B):=\#\{P\in U(K):H_L(P)\leq B\},$$
when considering height functions~$H_L$ associated with big metrized line bundles~$L$ on a variety~$X$ as above, and for appropriate open subsets~$U\subseteq X$. More precisely, if we write $\tau \prec \sigma$ whenever $\tau$ is a face of a cone $\sigma$, then one defines  the following classical numerical invariants for a line bundle class~$L$ on $X$:
\begin{equation*}
    \begin{split}
       a(L)&:=\inf\{a\in \RR: aL+K_X\in \Lambda_{\textup{eff}}(X)\}, \\
       b(L)&:=\max\{\textup{codim}(\tau): a(L)L+K_X\in \tau\prec \Lambda_{\textup{eff}}(X)\},
    \end{split}
\end{equation*}
which measure the position of $L$ inside the effective cone~$\Lambda_{\textup{eff}}(X)$. With this notation, the more general version of the above conjecture states that there exists a constant~$C>0$ such that
\begin{equation}\label{eq Batirev-Manin conjecture}
  N(U,H_L,B)\sim CB^{a(L)}(\log B)^{b(L)-1} \quad \text{as }B\to \infty.  
\end{equation}

Note that~$a(-K_X)=1$ and~$b(-K_X)=\rk \operatorname{Pic}(X)$, hence~\eqref{eq Batirev-Manin conjecture} is indeed a generalization of~\eqref{eq MP Conjecture}.

The above conjectures have been proven by various authors, either in specific examples or in certain families of varieties (see for instance~\cite{Peyre2003PHBTA}, \cite[Section~3]{Tschinkel03}, \cite{Bro07} and~\cite[Section~4]{Tsch008} for accounts of such results).

Typically, asymptotic formulas of the form~\eqref{eq MP Conjecture} and~\eqref{eq Batirev-Manin conjecture} are deduced, via a Tauberian theorem, from the analytic properties of the associated height zeta functions. More precisely, considering~$U\subseteq X$ and a height function~$H_L$ associated to a big metrized line bundle~$L$, the corresponding height zeta function is defined as
$$\zeta_{U,L}(s):=\sum_{P\in U(K)}H_L(P)^{-s}. $$ 
If~$\zeta_{U,L}(s)$ converges absolutely in the half-plane~$\Re(s)>a>0$,  has  analytic continuation to~$\Re(s)\geq a, s\neq a$ and  has a pole of order~$b\geq 1$ at~$s=a$, then one obtains
\begin{equation}\label{eq asymptotic formula from Tauberian thm}
 N(U,H_L,B)\sim CB^{a}(\log B)^{b-1} \quad \text{as }B\to \infty,   
\end{equation}
with
$$C:=\frac{1}{(b-1)!a}\lim_{s\to a}(s-a)^{b}\zeta_{U,L}(s)$$
(see, e.g., \cite[Th\'eor\`eme~III]{DelangeTauberianThm}).

In the case where~$K$ is a global function field of positive characteristic, the relevant height functions typically  have values contained in~$q^{\ZZ}$, where~$q$ the cardinality of the constant subfield~$\F_q\subset K$. This implies that the associated height zeta functions are invariant under~$s\mapsto s+\frac{2\pi i}{\log(q)}$. In particular, having a pole at a point~$s=a>0$ implies the existence of infinitely many poles on the line~$\Re(s)=a$, and the strategy described above breaks down. Moreover, in this setting, it is hopeless to expect an asymptotic formula of the form~\eqref{eq asymptotic formula from Tauberian thm} since~$N(U,H_L,q^n)=N(U,H_L,q^{n+\frac{1}{2}})$ would imply~$\sqrt{q}=1$ (as explained in~\cite[Section~1.1]{Bourqui2011VarietiesNonSplit}).

For these reasons, for varieties defined over global fields of positive characteristic, the analogue of Conjecture~\ref{conj MP} is sometimes presented in terms of the analytic properties of the respective height zeta function. This is the approach taken by Peyre in~\cite{Pey12PHB} in the case of flag varieties, and by Bourqui in~\cite{BOURQUI2002}, \cite{Bourqui2003VarietiesSplit} and~\cite{Bourqui2011VarietiesNonSplit,Bou16} in the case of Hirzebruch surfaces, split toric varieties, and general toric varieties, respectively. As a variant of Conjecture~\ref{conj MP}, it is then expected that the anticanonical height zeta function~$\zeta_{U,-K_X}(s)$, for appropriate open subsets~$U\subseteq X$, converges absolutely for~$\Re(s)>1$, it admits meromorphic continuation to~$\Re(s)\geq 1$ and  has a pole at~$s=1$ of order~$\rk \operatorname{Pic}(X)$ satisfying
\begin{equation*}\label{eq MP for global fields positive characteristic}
 \lim_{s\to 1}(s-1)^{\rk \operatorname{Pic}(X)}\zeta_{U,-K_X}(s)=\alpha^\ast(X)\beta(X)\tau_H(X),   
\end{equation*}
where
\begin{align*}
       \alpha^\ast(X)&:=\int_{\Lambda_{\textup{eff}}(X)^\vee} e^{-\left \langle -K_X,\mathbf{y}\right \rangle }\textup{d}\mathbf{y},  \\
       \beta(X)&:=\#H^1(\textup{Gal}(K^{\mathrm{sep}}/K), \Pic(X_{K^{\mathrm{sep}}})), 
\end{align*}
and~$\tau_H(X)$ is the Tamagawa number of~$X$ with respect to the anticanonical height function~$H=H_{-K_X}$ defined by Peyre in~\cite[Section~2]{Pey12PHB} in the case of global fields of positive characteristic. Similarly, as a variant of~\eqref{eq Batirev-Manin conjecture}, one expects~$\zeta_{U,L}(s)$ to converge absolutely for~$\Re(s)> a(L)$ and have meromorphic continuation to~$\Re(s)\geq  a(L)$ with a pole at~$s=a(L)$ of order~$b(L)$.

Remaining in the context of varieties defined over global function fields, there is yet another approach that focuses instead on the asymptotic behaviour of the number
$$\tilde{N}(U,H_L,B):=\#\{P\in U(K):H_L(P)=B\}$$
of rational points of large height. Asymptotic formulas for these numbers can be deduced, via another Tauberian theorem, from finer analytic properties of the corresponding height zeta functions. This is the approach that we take in this paper.

\begin{remark}
    Given a projective variety~$V$ and a smooth projective curve~$\mathcal{C}$, both defined over a finite field~$\FF_q$, the study of the number of points in~$V(\FF_q(\mathcal{C}))$ of large height  is equivalent to the problem of counting morphisms~$\mathcal{C}\to V$ of large degree. This geometric perspective has been pursued by several authors (see, e.g., \cite{Bou2011}, \cite{Bou16}, \cite{LT19}, \cite{Tan21}, \cite{BLRT22} and references therein).
\end{remark}

In the present paper we focus on  smooth projective split toric varieties with Picard rank 2 over a global field~$K$ of positive characteristic, and consider height zeta functions associated to  big metrized line bundles. Our results show that these varieties can be naturally decomposed into a finite disjoint union of subvarieties for which finer analytic properties of the corresponding height zeta functions can be given. This allows us to obtain 
asymptotic formulas for the number of rational points of large height on each component of our disjoint union decomposition, going beyond the scope of the classical expectations. Moreover, our asymptotic formulas have controlled error terms and explicit leading coefficients. We achieve these results by using concrete defining equations for our varieties and by performing explicit computations on the height zeta functions.

This work was motivated by the results of Bourqui in~\cite{BOURQUI2002}, \cite{Bourqui2003VarietiesSplit} and~\cite{Bourqui2011VarietiesNonSplit}, where the classical expectation for the anticanonical height zeta function 
is verified for toric varieties over global fields of positive characteristic, with~$U$ the dense torus orbit. Bourqui's work was inspired by Batyrev and Tschinkel's proof of Conjecture~\ref{conj MP} for toric varieties over number fields~\cite{BT98}.

In order to present our results more precisely, let us recall a geometric result due to Kleinschmidt~\cite{Kleinschmidt88} stating that all smooth projective toric varieties of Picard rank 2 are (up to isomorphism) of the form 
\[
X=\PP(\CO_{\PP^{t-1}}\oplus \CO_{\PP ^{t-1}}(a_1)\oplus\cdots\oplus\CO_{\PP ^{t-1}}(a_r)),
\]
where $r \geq 1$, $t \geq 2$, and $0 \leq a_1 \leq \cdots \leq a_r$ are integers. Moreover, if we define~$I_k:=\{1,\ldots,k\} \subset \ZZ$, then~$X$ is isomorphic to the subvariety~$X_d(a_1,\ldots,a_r)\subset \PP^{rt}\times \PP^{t-1}$ given in homogeneous coordinates $([x_0: (x_{ij})_{i\in I_t,j\in I_r}],[y_1:\ldots:y_t])$ by the equations
\begin{equation*}
x_{mj}y^{a_j}_n=x_{nj}y^{a_j}_m ,\, \text{ for all } j\in I_{r} \text{ and all } m,n\in I_t \text{ with }m\neq n.   
\end{equation*}
We refer to these varieties as \emph{Hirzebruch--Kleinschmidt varieties}. As usual, in order to avoid convergence issues, we find it necessary to first restrict our attention to rational points in a dense open subset. To this purpose, we define the \emph{good open subset}~$U_d(a_1, \ldots, a_r)\subset X_d(a_1,\ldots,a_r)$ as the complement of the closed subvariety define by the equation~$x_{tr}=0$.

Our main results describe the analytic properties of the height zeta function~$\zeta_{U,L}(s)$ for the good open subset~$U:=U_d(a_1, \ldots, a_r)\subset X_d(a_1,\ldots,a_r)$, for every big line bundle class~$L\in \Pic(X)$ that we equip with a ``standard metrization''. These general results are given in Section~\ref{subsection general height zeta functions}. For simplicity, we present here the statement for~$L=-K_X$.

We let $\mathcal{C}$ be a projective, smooth, geometrically irreducible curve of genus $g$ defined over the finite field $\mathbb{F}_q$ of~$q$ elements, and let~$K=\mathbb{F}_q(\mathcal{C})$ denote the field of rational functions of $\mathcal{C}$.

\begin{thm}\label{thm:MainThmAnticanonicalCaseoverFF}
    Let $X=X_d(a_1,\ldots,a_r)$ be a Hirzebruch--Kleinschmidt variety  of dimension $d=r+t-1$ over the global function field~$K=\F_q(\mathcal{C})$, let~$H=H_{-K_X}$ denote the anticanonical height function on~$X(K)$, and let~$U=U_d(a_1,\ldots,a_r)$ be the good open subset of~$X$. Moreover, put
    $$\eta_X:=\mathrm{g.c.d.}(r+1,t-|\mathbf{a}|).$$
    Then, the anticanonical height zeta function~$\zeta_{U,-K_X}(s)$ converges absolutely for~$\Re(s)>1$, it is a rational function in~$q^{-\eta_X s}$ and has a pole of order 2 at~$s=1$ with
     \begin{equation}\label{eq constant main thm anticanonical}
       \lim_{s\to 1}(s-1)^2\zeta_{U,-K_X}(s)= \frac{q^{(d+2)(1-g)} h_K^2}{\zeta_K(t)\zeta_K(r+1)((r+1)a_r-|\mathbf{a}|+t)(r+1)(q-1)^2\log(q)^2},  
     \end{equation}
    where~$|\mathbf{a}|:=\sum_{i=1}^r a_i$, $h_K$ is the class number of~$K$, $g$ is the genus of~$\mathcal{C}$ and~$\zeta_K$ is the zeta function of~$K$. Moreover, it is holomorphic for
    $$\Re(s)>1-\min\left\{\frac{1}{r+1},\frac{1}{(r+1)a_r-|\mathbf{a}|+t}\right\},\Re(s)\neq 1,$$
    and on~$\Re(s)=1, s\not \in \left\{1+\frac{2\pi i m}{\eta_X \log(q)}:m\in \ZZ\right\}$ it has at most simple poles.
\end{thm}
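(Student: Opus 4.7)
The plan is to derive Theorem~\ref{thm:MainThmAnticanonicalCaseoverFF} by specializing the general height-zeta formulas of Section~\ref{subsection general height zeta functions} to $L=-K_X$. I start by rewriting the anticanonical class in the extremal basis of the effective cone: one checks that $\Lambda_{\textup{eff}}(X)=\cone(H,\xi-a_rH)$, with extremal generators $E_1=\xi-a_rH$ and $E_2=H$, so that
$$-K_X=(r+1)E_1+\bigl((r+1)a_r-|\mathbf{a}|+t\bigr)E_2.$$
These two coefficients are exactly the linear forms in $s$ that will control the pole structure of $\zeta_{U,-K_X}(s)$.

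Next, I use the Cox-ring description of $X$: coordinates $z_0,\ldots,z_r,y_1,\ldots,y_t$ of $\Pic(X)$-weights $(1,-a_j)$ and $(0,1)$ respectively. The condition $x_{tr}\ne 0$ defining $U$ is equivalent to $z_ry_t^{a_r}\ne 0$, so the $\GG_m^2$-action on Cox coordinates acts freely on $U$, giving $U(K)\cong K^{r+t-1}$ with affine coordinates $(z_0,\ldots,z_{r-1},y_1,\ldots,y_{t-1})$ (after normalizing $z_r=y_t=1$), and the height factors as $H_{-K_X}(P)=H_{E_1}(P)^{r+1}H_{E_2}(P)^{(r+1)a_r-|\mathbf{a}|+t}$. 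To compute the zeta function I reorganize the sum over $U(K)$ as a sum over pairs $(M_1,M_2)\in\Pic(\mathcal{C})^2$ representing the two extremal directions, together with tuples of global sections $(\tilde z_j)\in\prod_j H^0(\mathcal{C},M_1\otimes M_2^{a_r-a_j})$ and $(\tilde y_i)\in H^0(\mathcal{C},M_2)^t$ subject to the two nonvanishing conditions from the Cox irrelevant ideal, modulo the $(\F_q^*)^2$-action. Clearing these conditions via M\"obius inversion over effective divisors and applying Riemann--Roch for large enough degrees yields a factor $\zeta_K(t)^{-1}$ (from the $t$ coordinates $y_i$) and $\zeta_K(r+1)^{-1}$ (from the $r+1$ coordinates $z_j$), together with two geometric series in the degrees $\deg M_2$ and $\deg M_1-a_r\deg M_2$ with bases $q^{(1-s)(r+1)}$ and $q^{(1-s)((r+1)a_r-|\mathbf{a}|+t)}$. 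Summing the geometric tails and absorbing the finitely many low-degree corrections into a rational function $G(s)$, one obtains
$$\zeta_{U,-K_X}(s)=\zeta_K\!\bigl((r+1)(s-1)+1\bigr)\cdot\zeta_K\!\bigl(((r+1)a_r-|\mathbf{a}|+t)(s-1)+1\bigr)\cdot G(s),$$
with $G(s)$ rational in $q^{-\eta_X s}$, holomorphic and nonzero in the claimed region; the rationality in $q^{-\eta_X s}$ is automatic since every $q^{-s}$-exponent that appears lies in $(r+1)\ZZ+((r+1)a_r-|\mathbf{a}|+t)\ZZ=\eta_X\ZZ$.

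The stated analytic properties then read off this factorization. Each $\zeta_K$-factor contributes a simple pole at $s=1$ (yielding the double pole) and an additional simple pole at $s=1-\tfrac{1}{r+1}$ (resp.\ $s=1-\tfrac{1}{(r+1)a_r-|\mathbf{a}|+t}$), which together determine the holomorphy region. On the vertical line $\Re(s)=1$, the periodicity $s\mapsto s+\tfrac{2\pi i}{\eta_X\log q}$ produces a lattice of double poles; at all other points on this line only one of the two $\zeta_K$-factors is singular, giving at most simple poles. For the residue at $s=1$, using $\mathrm{Res}_{s=1}\zeta_K(s)=h_Kq^{1-g}/((q-1)\log q)$ together with the chain-rule factors $\tfrac{1}{r+1}$ and $\tfrac{1}{(r+1)a_r-|\mathbf{a}|+t}$, and the value $G(1)=q^{d(1-g)}/(\zeta_K(t)\zeta_K(r+1))$ (with $d=r+t-1$, the numerator $q^{d(1-g)}$ arising from the Riemann--Roch counts of sections), a direct calculation recovers exactly \eqref{eq constant main thm anticanonical}. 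The main technical obstacle is the coupled M\"obius/Riemann--Roch analysis on the $z_j$'s, whose line bundles $M_1\otimes M_2^{a_r-a_j}$ depend nontrivially on $M_2$: the Riemann--Roch error terms must be tracked uniformly in both Picard degrees and the finitely many low-degree corrections must be shown to collapse into $G$ without affecting the residue at $s=1$.
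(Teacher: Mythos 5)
Your overall strategy — parametrize $U$ by affine Cox coordinates, apply M\"obius inversion over effective divisors, and use Riemann--Roch to reduce the section counts to geometric series governed by two copies of $\zeta_K$ — is essentially the one the paper uses (compare Lemma~\ref{lem:explicit_U}, Lemma~\ref{lem:prodOfZT} and Steps~1--4 of Section~\ref{sec proof of general thm}). The expression of $-K_X$ in the extremal basis of $\Lambda_{\textup{eff}}(X)$, the identification of the two controlling linear forms $(r+1)(s-1)+1$ and $((r+1)a_r-|\mathbf{a}|+t)(s-1)+1$, and the residue computation at $s=1$ all match the paper's calculations and give the correct leading constant.

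However, the claimed \emph{multiplicative} factorization
\begin{equation*}
\zeta_{U,-K_X}(s)=\zeta_K\!\bigl((r+1)(s-1)+1\bigr)\cdot\zeta_K\!\bigl(((r+1)a_r-|\mathbf{a}|+t)(s-1)+1\bigr)\cdot G(s)
\end{equation*}
with $G$ holomorphic and nonvanishing in $\Re(s)>1-\min\{\tfrac{1}{r+1},\tfrac{1}{(r+1)a_r-|\mathbf{a}|+t}\}$ is a genuine gap and, in fact, is not true. The low-degree Riemann--Roch corrections produce an \emph{additive}, not multiplicative, deviation from the product of zeta functions; see the final displayed identity for $\zeta_{U,L}(s)$ in Step~4, where extra summands appear carrying only one (or neither) of the two $\zeta_K$-numerators. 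If you define $G(s):=\zeta_{U,-K_X}(s)/\bigl(\zeta_K(\ldots)\zeta_K(\ldots)\bigr)$, then $G$ acquires poles at the zeros of those two zeta factors. But $\zeta_K$ has all its zeros on the line $\Re=1/2$, so $\zeta_K\bigl((r+1)(s-1)+1\bigr)$ vanishes precisely on $\Re(s)=1-\tfrac{1}{2(r+1)}$, which lies strictly \emph{inside} the claimed holomorphy region $\Re(s)>1-\tfrac{1}{r+1}$. Thus $G$ is not holomorphic there, and reading off the pole structure from this factorization is not valid as stated. The fix is exactly what the paper does: keep the expansion additive, observe that the main term carries both zeta factors with a holomorphic nonvanishing multiplier, and that the remaining summands carry at most one zeta factor each; the poles (and their orders) are then deduced term by term, using that the two arithmetic progressions of pole locations intersect only along $\{1+\tfrac{2\pi i m}{\eta_X\log q}\}$ because $\gcd(r+1,(r+1)a_r-|\mathbf{a}|+t)=\eta_X$.

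Two smaller points. First, the paper treats the case $a_r=0$ (so $X\simeq\PP^r\times\PP^{t-1}$) separately via Lemmas~\ref{lem decomposition HK over FF case a_r=0} and~\ref{lem restriction of heights} together with Theorem~\ref{thm:ThmGeneral when ar=0}, because the affine parametrization of $U$ in Lemma~\ref{lem:explicit_U} is stated only for $a_r>0$; you should either check that your Cox-coordinate parametrization of $U$ still works when $a_r=0$ (the nonvanishing condition degenerates to $z_r\ne 0$ only), or mirror the paper and reduce to products of projective-space zeta functions. Second, the phrase ``yields a factor $\zeta_K(t)^{-1}$ and $\zeta_K(r+1)^{-1}$'' is an oversimplification: the M\"obius inversion produces $s$-dependent denominators $\zeta_K(\gamma s)$ and $\zeta_K\bigl((\gamma a_r+\xi)s-((r+1)a_r-|\mathbf{a}|)\bigr)$, which equal $\zeta_K(r+1)$ and $\zeta_K(t)$ only at $s=1$; this does not affect your residue computation but does affect the structure of the remainder terms.
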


\begin{remark}
\begin{enumerate}
    \item The fact that~$\zeta_{U,-K_X}(s)$ is a function of~$q^{-\eta_Xs}$ follows from the elementary observation that the anticanonical height function~$H_{-K_X}$ assumes values in~$q^{\eta_X\ZZ}$ (see Proposition~\ref{prop effective cone}(2) and equation~\eqref{eq height function HL} in Section~\ref{sec height zeta HK}).
    \item In Section~\ref{section Peyre alpha} we show that
$$\alpha^\ast(X)=\frac{1}{(r+1) ((r+1)a_r-|\mathbf{a}|+t)}.$$
Also, since $X$ is a split toric variety, it is well known that~$\beta(X)=1$ (see, e.g., \cite[Remark~1.7 and Corollary~1.18]{BT98}). Hence, Theorem~\ref{thm:MainThmAnticanonicalCaseoverFF} and~\cite[\emph{Th\'eor\`eme}~1.1]{Bourqui2003VarietiesSplit} imply that the Tamagawa number of~$X$ with respect to the anticanonical height function~$H=H_{-K_X}$ is
$$\tau_H(X)=\frac{q^{(d+2)(1-g)} h_K^2}{\zeta_K(t)\zeta_K(r+1)(q-1)^2\log(q)^2}.$$
\item In the case of Hirzebruch surfaces, i.e.~$r=1$ and~$t=2$, Theorem~\ref{thm:MainThmAnticanonicalCaseoverFF} is in accordance with~\cite[Proposition~3.1]{BOURQUI2002} (see also p.~355 in loc.~cit.). 
\end{enumerate}
\end{remark}

As mentioned before, we can deduce from the analytic properties of~$\zeta_{U,-K_X}(s)$ given in Theorem~\ref{thm:MainThmAnticanonicalCaseoverFF}, good estimates for the number~$\tilde{N}(U,H_{-K_X},q^M)=\#\{P\in K:H_{-K_X}(P)=q^M\}$ of rational points in~$U$ of anticanonical height~$q^M$ for large~$M$. Since this number is trivially zero when~$M$ is not divisible by~$\eta_X$, we can assume~$M\in \eta_X\ZZ$.

\begin{cor}\label{cor:MainThmAnticanonicalCaseoverFF}
Under the hypothesis of Theorem~\ref{thm:MainThmAnticanonicalCaseoverFF}, for every~$\delta>1-\min\left\{\frac{1}{r+1},\frac{1}{(r+1)a_r-|\mathbf{a}|+t}\right\}$ we have the estimate
 $$\tilde{N}(U,H_{-K_X},q^M)=q^{M}(C_1M+C_0(M))+O\left(q^{\delta M}\right) \quad \text{as }M\to \infty,M\in \eta_{X}\ZZ,$$
where
     $$C_{1}=\frac{q^{(d+2)(1-g)} \eta_X h_K^2}{\zeta_K(t)\zeta_K(r+1)((r+1)a_r-|\mathbf{a}|+t)(r+1)(q-1)^2},$$
     and~$C_0(M)\in \RR$ depends only on~$\frac{M}{\eta_X}$ mod~$\frac{(r+1)((r+1)a_r-|\mathbf{a}|+t)}{\eta_X^2}$.
\end{cor}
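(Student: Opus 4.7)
The plan is to extract the Taylor coefficients of $\zeta_{U,-K_X}(s)$ directly via partial fractions, using that Theorem~\ref{thm:MainThmAnticanonicalCaseoverFF} identifies it as a rational function in $u := q^{-\eta_X s}$ with fully controlled pole locations; no genuine Tauberian theorem is needed.

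First I would repackage the height zeta function as a power series. Since $H_{-K_X}$ takes values in $q^{\eta_X \ZZ_{\geq 0}}$ on $U(K)$,
\[
\zeta_{U,-K_X}(s) \;=\; F(u) \;:=\; \sum_{n \geq 0} \tilde{N}(U, H_{-K_X}, q^{\eta_X n})\, u^n, \qquad u := q^{-\eta_X s}.
\]
Setting $\rho := q^{-\eta_X}$, Theorem~\ref{thm:MainThmAnticanonicalCaseoverFF} says that $F$ is rational in $u$, has a pole of order $2$ at $u = \rho$, and its remaining singularities in the disk $\{|u| \leq q^{-\eta_X \delta'}\}$ (for any $\delta'$ with $1 - \min\{\tfrac{1}{r+1}, \tfrac{1}{(r+1)a_r - |\mathbf{a}|+t}\} < \delta' < \delta$) consist of finitely many simple poles $u_1, \ldots, u_\ell$ lying on the circle $|u| = \rho$.

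Second I would perform partial fractions
\[
F(u) \;=\; \frac{A}{(u-\rho)^2} + \frac{B}{u-\rho} + \sum_{j=1}^{\ell} \frac{B_j}{u-u_j} + G(u),
\]
with $G$ holomorphic on an open neighborhood of $\{|u| \leq q^{-\eta_X \delta'}\}$, and extract the $n$-th coefficient using $\tfrac{1}{(u-\rho)^2} = \sum_{n} \tfrac{n+1}{\rho^{n+2}} u^n$, $\tfrac{1}{u-u_j} = -\sum_{n} u_j^{-n-1} u^n$, and Cauchy's estimate $[u^n]G(u) = O(q^{\eta_X \delta' n}) = O(q^{\delta M})$. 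Writing $M = \eta_X n$, this produces an expression of the form $C_1 M q^M + C_0(M)\, q^M + O(q^{\delta M})$, where the $M$-linear term comes entirely from the double pole at $\rho$ with $C_1 = A/(\eta_X \rho^2)$. To match the value stated, I would use $u - \rho = \rho\bigl(q^{-\eta_X(s-1)} - 1\bigr) \sim -\eta_X \log(q)\, \rho\, (s-1)$ as $s \to 1$ to obtain
\[
A \;=\; \eta_X^2 \log(q)^2\, \rho^2 \lim_{s \to 1}(s-1)^2 \zeta_{U,-K_X}(s),
\]
so that $C_1 = \eta_X \log(q)^2 \lim_{s \to 1}(s-1)^2 \zeta_{U,-K_X}(s)$; plugging in~\eqref{eq constant main thm anticanonical} yields the claimed expression.

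Finally, to pin down the periodicity of $C_0(M)$, observe that each $\zeta_j := \rho/u_j$ is unimodular and the contribution of the $j$-th simple pole to $\tilde{N}(U,H_{-K_X},q^M)$ is $-(B_j/u_j)\, q^M\, \zeta_j^{M/\eta_X}$. Hence it suffices to show that every such $\zeta_j$ is an $N$-th root of unity for $N := (r+1)((r+1)a_r - |\mathbf{a}|+t)/\eta_X^2$, equivalently $u_j \in \{\rho \omega : \omega^N = 1\}$. This I would read off from the explicit factorization of $F(u)$ produced in Section~\ref{subsection general height zeta functions}, in which the denominator of $F$ is a product of cyclotomic-type factors whose non-$\rho$ roots all lie in this set. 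Verifying this last inclusion is the main obstacle; every other step is routine partial fractions bookkeeping.
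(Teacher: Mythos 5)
Your proposal is correct and, once unwound, is essentially the same argument the paper uses: Theorem~\ref{thm:tauberianThmFF} is itself proved by Cauchy's residue theorem applied to $\Zet_H(T)/T^{M+1}$, which is exactly the coefficient-extraction you carry out by hand via partial fractions (the choice of variable $u = q^{-\eta_X s}$ versus $T = q^{-s}$ is cosmetic, since the $T$-poles are just the $\eta_X$-th roots of the $u$-poles and the stated periodicity modulus transforms accordingly). Your computation of $C_1$ from the double pole is correct, including the conversion factor $(u-\rho)^2 \sim \rho^2\eta_X^2\log(q)^2(s-1)^2$.

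The one step you flag as the ``main obstacle'' --- that each $\zeta_j = \rho/u_j$ is an $N$-th root of unity with $N = \gamma_0(\gamma_0 a_r + \xi_0)$, where $L_0 := \eta_X^{-1}(-K_X) = \gamma_0 h + \xi_0 f$ is primitive --- is real in the sense that it does \emph{not} follow from Theorem~\ref{thm:MainThmAnticanonicalCaseoverFF} alone (that theorem only constrains the real parts of the poles and says they are at most simple off the central line), so the paper's phrase ``direct consequence'' glosses over the same point. But it is immediate from Theorem~\ref{thm:ThmGeneralversion}(1) applied to $L_0$: the poles on $\Re(s) = a(L_0)$ form the set $\{a(L_0)\} \cup \mathcal{A}_{L_0} \cup \mathcal{B}_{L_0}$ modulo $\tfrac{2\pi i}{\log q}\ZZ$, and hence (in the $T = q^{-s}$ plane, say) lie at $q^{-\eta_X}$ times roots of unity of order dividing $\gamma_0$ (from $\mathcal{A}_{L_0}$) or dividing $\gamma_0 a_r + \xi_0$ (from $\mathcal{B}_{L_0}$). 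Since $\gcd(\gamma_0,\xi_0) = 1$ forces $\gcd(\gamma_0,\gamma_0 a_r+\xi_0)=1$, these are all $N$-th roots of unity with $N = \gamma_0(\gamma_0 a_r+\xi_0) = (r+1)((r+1)a_r-|\mathbf{a}|+t)/\eta_X^2$, which is exactly the periodicity modulus in the statement. (For the degenerate case $a_r=0$ one appeals to Theorem~\ref{thm:ThmGeneral when ar=0} instead, as in the proof of Theorem~\ref{thm:MainThmAnticanonicalCaseoverFF}, and the same observation applies.) With that inclusion in hand, your argument closes.
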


We now explain how to study further the number of rational points of large height in the complement of the good open subset of~$X_d(a_1,\ldots,a_r)$. Assume~$a_r>0$ for simplicity. Then, there is a natural decomposition of~$X=X_d(a_1,\ldots,a_r)$ of the form
\begin{equation}\label{eq decomp HK intro}
 X\simeq \left\{
\begin{array}{ll}
 X_{d-1}(a_1,\ldots,a_{r-1})\sqcup \A^r \sqcup \bigg(\bigsqcup_{2\leq t'\leq t}U_{t'+r-1} (a_1,\ldots,a_{r})\bigg)     &  \text{if }r>1,\\
 \PP^{t-1}\sqcup \A^1 \sqcup \bigg(\bigsqcup_{2\leq t'\leq t}U_{t'} (a_1)\bigg)    &  \text{if }r=1
\end{array}
   \right.
\end{equation}
(see Section~\ref{sec decomposition of HK} for details), which allows us to work recursively on the dimension of~$X$, and decompose~$\zeta_{X,-K_X}(s)$ as a finite sum of height zeta functions of projective (and affine) spaces and height zeta functions of good open subsets of Hirzebruch--Kleinschmidt varieties of dimension~$d=t+r-1,\ldots,2$. On the one hand, height zeta functions of projective (and affine) spaces are well understood (see Section~\ref{sec height zeta function of projective spaces}). On the other hand, for each good open subset~$U'=U_{t'+r'-1}(a_1,\ldots,a_{r'})$ with~$2\leq t'\leq t,1\leq r'\leq r$ we get a contribution of a height zeta function of the form~$\zeta_{U',L}(s)$ with~$L\in \Pic(X_{d'}(a_1,\ldots,a_{r'}))$ generally distinct from the corresponding anticanonical class. Nevertheless, provided~$L$ is big, we can still describe the analytic properties of~$\zeta_{U',L}(s)$ in detail, see Theorems~\ref{thm:ThmGeneralversion} and~\ref{thm:ThmGeneral when ar=0} in Section~\ref{subsection general height zeta functions}. Note that the bigness condition on~$L$ is necessary, because when~$L$ is not big, $\zeta_{U',L}(s)$ has no finite abscissa of absolute convergence. The aforementioned results allow us to give precise estimates on the number of rational points of large height on each component in~\eqref{eq decomp HK intro}; see Theorems~\ref{thm:AsymptoticFormulaGeneralversion} and~\ref{thm:AsymptoticFormulaGeneralversion when ar=0} in Section~\ref{sec counting rationa points large height}. The following example illustrates these results. 

\begin{example}\label{ex intro HK threefold}
Given an integer~$a>0$, let us consider the Hirzebruch--Kleinschmidt threefold~$X:=X_3(a)$ with projection map~$\pi:X_3(a)\to \PP^2$. The decomposition~\eqref{eq decomp HK intro} in this case becomes
\begin{equation}
\label{eq:decomposition of example}
    X\simeq 
    \PP^2 \sqcup \A^1 \sqcup U_3(a) \sqcup U_2(a).
\end{equation}
Put~$U:=U_3(a),U':=U_2(a)\subset X':=X_2(a)$ and let~$\pi':X'\to \PP^1$ denote the corresponding projection map. The anticanonical class on~$X$ is~$-K_X=\CO_X(2)\otimes \pi^\ast (\CO_{\PP^2}(3-a))$ and~$\eta_X=\mathrm{g.c.d.}(2,3-a)$. In particular, $\eta_X=1$ if~$a$ is even, and~$\eta_X=2$ if~$a$ is odd. Using Corollary~\ref{cor:MainThmAnticanonicalCaseoverFF} we get
$$\tilde{N}(U,H_{-K_X},q^M)=(C_1 M+C_0(M))q^{M} +O\left(q^{\delta_1 M}\right) \quad \text{as }M\to \infty,M\in \eta_{X}\ZZ,$$
for every~$\delta_1>\frac{1}{2}$, with~$C_1>0$ and~$C_0(M)\in \RR$ depending only on the class of~$M$ mod~$2(a+3)$ if~$a$ is even, and only on~$\frac{M}{2}$ mod~$\frac{a+3}{2}$ if~$a$ is odd. By Lemma~\ref{lem restriction of heights} in Section~\ref{sec decomposition of HK} the component~$U_2(a)$ in~\eqref{eq:decomposition of example} has height function~$H_L$ with~$L:=\CO_{X'}(2)\otimes (\pi')^\ast (\CO_{\PP^1}(3-a))\in \Pic(X')$, which is big but different from the anticanonical class~$-K_{X'}$. Nevertheless, by Theorem~\ref{thm:AsymptoticFormulaGeneralversion}  we get, for every~$\delta_2>\frac{a+2}{a+3}$, the estimate
$$\tilde{N}(U',H_{-K_X},q^M)=C_{2}(M)q^{M}+O\left(q^{\delta_2 M}\right) \quad \text{as }M\to \infty,M\in \eta_{X}\ZZ,$$
with~$C_{2}(M)>0$ and depending only on the residue class of~$M$ mod 2 if~$a$ is even, and independent of~$M$ if~$a$ is odd. 
Now, by the same Lemma~\ref{lem restriction of heights} the component~$\PP^2$ in~\eqref{eq:decomposition of example} has height  function~$H_{\PP^2}^{(3-a)}$ where~$H_{\PP^2}$ denotes the ``standard height function'' on~$\PP^2(K)$ defined in Section~$\ref{sec height zeta function of projective spaces}$. We see that~$\PP^2$ has no rational points of height~$>1$ when~$a\geq 3$. Assuming~$a<3$, we can apply the estimate~\eqref{eq Wan's estimate} in Section~\ref{sec height zeta function of projective spaces} and get
$$\tilde{N}(\PP^2,H_{-K_X},q^M)=C_{3}q^{\frac{3M}{3-a}}+O\left(q^{\delta_3 M}\right) \quad \text{as }M\to \infty,M\in (3-a)\ZZ,$$
for every~$\delta_3>\frac{1}{2(3-a)}$ where~$C_3>0$, while~$\tilde{N}(\PP^2,H_{-K_X},q^M)=0$ if~$M \not \in (3-a)\ZZ$. 

Finally, by Lemma~\ref{lem restriction of heights} the component~$\A^1=\PP^1\setminus \{[1:0]\}$ in~\eqref{eq:decomposition of example} has height function~$H_{\PP^1}^2$ where~$H_{\PP^1}$ is the standard height function on~$\PP^1(K)$. In particular, by the estimate~\eqref{eq Wan's estimate} we get
$$\tilde{N}(\A^1,H_{-K_X},q^M)= C_{4}q^{M}+O\left(q^{\delta_4M}\right) \quad \text{as }M\to \infty,M\in 2\ZZ,$$
for every~$\delta_4>\frac{1}{4}$ where~$C_4>0$, while~$\tilde{N}(\A^1,H_{-K_X},q^M)=0$ if~$M \not \in 2\ZZ$. 

We can then distinguish the following two cases:
\begin{enumerate}
    \item If~$a=1,2$, then~$\eta_X=3-a=2,1$ respectively, and the number of rational points of large anticanonical height in~$X_3(a)$ is ``dominated'' by the number of those points in the component~$\PP^2$ in~\eqref{eq:decomposition of example}. More precisely, we have that
    $$\tilde{N}(X,H_{-K_X},q^M)\sim \tilde{N}(\PP^2,H_{-K_X},q^M)\sim C_{3}q^{\frac{3M}{3-a}} \quad \text{as }M\to \infty,M\in \eta_X\ZZ.$$
    Moreover, by~\eqref{eq Wan's estimate} we have
    $$C_3=\frac{h_Kq^{3(1-g)}}{\zeta_K(3)(q-1)}.$$
    \item If~$a\geq 3$, then the component~$\PP^2$ in~\eqref{eq:decomposition of example} does not contribute to the number of rational points of large height, and
    $\tilde{N}(X,H_{-K_X},q^M)$ is ``dominated'' by the number of those points in the good open subset~$U=U_2(a)$. More precisely, we have
    $$\tilde{N}(X,H_{-K_X},q^M)\sim \tilde{N}(U,H_{-K_X},q^M)\sim C_1 Mq^{M} \quad \text{as }M\to \infty,M\in \eta_X\ZZ.$$
    Moreover, we have
    \begin{equation*}
          C_1=\frac{q^{5(1-g)} \eta_X h_K^2}{\zeta_K(3)\zeta_K(2)2(a+3)(q-1)^2}.  
    \end{equation*}
\end{enumerate}

We can also include here the easier case~$a=0$ with~$X=X_3(0)\simeq \PP^1\times \PP^2$. In this case one can directly give a good estimate on the number of rational points of large height in the whole variety~$X$. Indeed, a direct application of  Theorem~\ref{thm:AsymptoticFormulaGeneralversion when ar=0} in Section~\ref{sec counting rationa points large height} gives, for every~$\delta>\frac{1}{4}$, the estimate
$$\tilde{N}(X,H_{-K_X},q^M)=(\tilde{C}_1 M+\tilde{C}_0(M))q^{M} +O\left(q^{\delta M}\right) \quad \text{as }M\to \infty,M\in \ZZ,$$
with~$\tilde{C}_0(M)\in \RR$ depending only on~$M$ mod~$6$ and
$$\tilde{C}_1=\frac{q^{5(1-g)}  h_K^2}{\zeta_K(3)\zeta_K(2)6(q-1)^2}.$$ 
\end{example}

Note that the above strategy also applies if we start with a Hirzebruch--Kleinschmidt variety~$X$ with a height function induced by a general big metrized line bundle class~$L\in \Pic(X)$. We refer the reader to Section~\ref{sec ex Hirzebruch} where this is illustrated in the case of Hirzebruch surfaces.

The techniques used in this paper are inspired by the work of Bourqui~\cite{BOURQUI2002} on the anticanonical height zeta function on Hirzebruch surfaces. Combining Bourqui's ideas with some technical computations, we are able to express the height zeta function~$\zeta_{U,L}(s)$, for~$U$ the good open subset of a Hirzebruch--Kleinschmidt variety and for general big metrized line bundles~$L$, essentially as a rational function of degree 2 on the zeta function~$\zeta_K(s)$ of the base field~$K$. In that regard, it would be interesting to investigate if this method can also be applied to other families of projective bundles defined over global function fields. We expect to return to this problem in the near future.

\begin{remark}
    The results obtained in this article served us as a guiding route for our investigations into the analogous problem for Hirzebruch--Kleinschmidt varieties over number fields, which are reported in the companion paper~\cite{HMM24a}.
\end{remark}

This article is organized as follows. In Section~\ref{sec preliminaries} we introduce notation and recall some classical results concerning the geometry of curves defined over finite fields, and analytic properties the zeta functions of global function fields. There, we also present the Tauberian theorem that will be applied to our height zeta functions (see Section~\ref{sec tauberian}), and include a few general properties of toric varieties. In Section~\ref{section HK} we describe the effective cone, anticanonical class, and big line bundle classes in Hirzebruch--Kleinschmidt varieties. In Section~\ref{sec height zeta function of projective spaces} we recall an asymptotic formula due to Wan~\cite{Wan92} for the number of rational points of large ``standard height'' on projective spaces over global function fields, and the  analytic properties of the associated height zeta functions. The height zeta functions of Hirzebruch--Kleinschmidt varieties are studied in Section~\ref{sec height zeta HK}. In particular, their analytic properties are presented in Section~\ref{subsection general height zeta functions}. These results are used in Section~\ref{sec counting rationa points large height} to prove our asymptotic formulas for the number of rational points of large height. Section~\ref{sec height zeta HK} also includes a precise description of our decomposition of Hirzebruch--Kleinschmidt varieties (see Section~\ref{sec decomposition of HK}) and the proof of Theorem~\ref{thm:MainThmAnticanonicalCaseoverFF} (see Section~\ref{sec proof of main thm for anticanonical class}). Finally, in Section~\ref{sec ex Hirzebruch} we apply our general results to Hirzebruch surfaces. 

\section*{Acknowledgments} 
The authors thank \textsc{Lo\"{\i}s Faisant} and \textsc{Daniel Loughran} for geometric insight on the Manin--Peyre conjecture for varieties defined over global function fields, and \textsc{Giancarlo Lucchini-Arteche} for helpful comments on the first Galois cohomology group of an algebraic variety.

S.~Herrero's research is supported by ANID FONDECYT Iniciaci\'on grant 11220567 and by SNF grant CRSK-2{\_}220746. T.~Martinez's research is supported by ANID FONDECYT Iniciaci\'on grant 11220567, ANID BECAS Doctorado Nacional 2020-21201321, and by UTFSM/DPP Programa de Incentivo a la Investigación Cient\'ifica (PIIC) Convenio N°055/2023. P.~Montero's research is supported by ANID FONDECYT Regular grants 1231214 and 1240101.

  \section{Preliminaries}\label{sec preliminaries}

 \subsection{Basic notation}
 \label{subsection:Notation}
    Let $\mathcal{C}$ be a projective, smooth, geometrically irreducible curve of genus $g$ defined over the finite field $\mathbb{F}_q$ of~$q$ elements (as usual, $q$ is a positive power of a rational prime). Throughout this article we let~$K$ denote the field~$\mathbb{F}_q(\mathcal{C})$ of rational functions of $\mathcal{C}$. Associated to~$K$ we have the following objects (see, e.g., 
    \cite{Ros02} for details):
	\begin{itemize}
        \item The set of valuations~$\operatorname{Val}(K)$ of $K$, which is in bijection 
        with the set of closed points of $\mathcal{C}$.
		\item For each $v\in \Val(K)$, let $\mathcal{O}_v$ be the associated valuation ring and $m_v$ its maximal ideal, $k_v:=\mathcal{O}_v/m_v$ and $f_v:=[k_v:\F_q]$, so we have $\#k_v=q^{f_v}$. We define for $x\in K$, $|x|_v:=q^{-f_vv(x)}$, so that for every~$x\in K^\times$  we have the product formula 
	\begin{equation}\label{eq product formula}
	    \prod_{v\in \Val(K)}|x|_v=1.
	\end{equation}
         \item The  free abelian group $\Div(K)$ generated by $\Val(K)$. The elements of $\Div(K)$ are finite sums of the form $\sum_{v\in \Val(K)}n_v v$ with~$n_v\in \ZZ$ and~$n_v=0$ for all but finitely many~$v\in \Val(K)$.
	
	\item For $D=\sum_{v\in \Val(K)}n_vv\in            \Div(K)$ and $v\in \Val(K)$ define                $v(D):=n_v$ and set
	$$\Div^+(K):=\{D\in \Div(K): v(D)\geq 0,           \forall v\in \Val(K)\}.$$
	Moreover, for $D_1,\ldots,D_n\in \Div(K)$, we put 
	$$\sup(D_1,\ldots,D_n):=\sum_{v\in                 \Val(K)}\sup\{v(D_1),\ldots,v(D_n)\}v.$$
	\item For $x\in K^\times$, define the divisor      of $x$ as $(x):=(x)_0-(x)_\infty$ where
    \begin{equation*}
          (x)_0:=\sum_{\substack{v\in                        \Val(K)\\v(x)>0}}v(x)v, \quad 
          (x)_\infty:=-\sum_{\substack{v\in \Val(K)\\v(x)<0}}v(x)v.
    \end{equation*}
	We also put~$(0)=(0)_0=(0)_\infty=0$. 
    \item The degree function $\mathrm{\deg }:\Div(K)\to \ZZ$ defined by $\mathrm{\deg }(D):=\sum_{v\in \Val(K)}f_vv(D)$. We have~$\deg((x))=0$ for all~$x\in K$ (by the product formula~\eqref{eq product formula} in the case~$x\neq 0$).
    \item The class number~$h_K=\# (\Div^0(K)/(K))$, where~$\Div^0(K):=\{D\in \Div(K):\deg(D)=0\}$ and~$(K):=\{(x):x\in K\}$.
     \end{itemize}

For a vector~$\mathbf{a}=(a_1,\ldots,a_r)\in \mathbb{N}_0^r$, we write $|\mathbf{a}|:=\sum_{i=1}^r a_i$.

In this article all varieties will be assumed to be irreducible, reduced and separated schemes of finite type over the base field $K$. For simplicity we write~$\mathbb{A}^n$ and~$\mathbb{P}^{n}$ for the affine and projective space of dimension~$n$ over~$K$, and products of varieties are to be understood as fiber products over~$\mathrm{Spec}(K)$. 



\subsection{Geometric and arithmetic tools}\label{subsection arithmetic tools}

For~$D\in \Div(K)$, let $\ell(D):=\dim_{\mathbb{F}_q} H^0(\mathcal{C},\mathcal{O}_\mathcal{C}(D))$, where 
	$$H^0(\mathcal{C},\mathcal{O}_\mathcal{C}(D)):=\{x\in K^\times:(x)+D\geq 0\}\cup\{0\}.$$
	Note that $\ell(D)=0$ if $\deg(D)<0$ and if $D\geq 0$ then $\#\{x\in K:(x)_\infty\leq D\}=q^{\ell(D)}$. 
 
 As a consequence of the  Riemann--Roch theorem for curves (see, e.g., \cite[Theorem~5.4]{Ros02}) we have
	that
	$$\ell(D)-\ell(K_\mathcal{C}-D)=\deg(D)+1-g,$$
 where~$K_\mathcal{C}$ is a canonical divisor class of the curve~$\mathcal{C}$. In particular, if $\deg(D)>2g-2=\deg (K_\mathcal{C})$ then
  $$\ell(D)=\deg(D)+1-g.$$

We also define the zeta function
	$$\Zet_K(T):=\sum_{D\geq 0}T ^{\deg (D)}=\prod_{v\in \Val(K)}\left( 1-T^{f_v}\right)^{-1},$$
	and define the zeta function of $K$ as
	$$\zeta_K(s):=\Zet_K(q^{-s}).$$
	The series defining~$\zeta_K(s)$ converges absolutely and uniformly on compact subsets of the half-plane~$\Re(s)>1$ and has no zeroes in that domain. Moreover, there exits a polynomial~$\mathbf{L}_K(X)\in \ZZ[X]$ of degree~$2g$ such that
 $$\zeta_K(s)=\frac{\mathbf{L}_K(q^{-s})}{(1-q^{-s})(1-q^{1-s})}$$
 and~$\zeta_K(s)$ has simple poles at~$s=1$ and~$s=0$ with
 \begin{equation*}\label{eq residue zetaK}
   \operatorname{Res}_{s=1}\zeta_K(s)=\frac{h_Kq^{1-g}}{(q-1)\log(q)} 
 \end{equation*}
  (see, e.g., \cite[Theorem~5.9]{Ros02}).

\subsection{A Tauberian theorem for global function fields}\label{sec tauberian}

The following theorem is a variant of~\cite[Theorem~17.4]{Ros02}.

\begin{thm}\label{thm:tauberianThmFF}
Let $X$ be a countable set and~$H:X\to q^{\mathbb{N}_0}$ a function. Suppose that
 	$$\zeta_H(s):=\sum_{x\in X} 
 	H(x)^{-s}$$
 	is absolutely convergent for $\Re(s)>a>0$ and there exists~$\varepsilon>0$ such that~$\zeta_H(s)$ extends to a meromorphic function on the half-plane $\Re(s)\geq a-\varepsilon$ satisfying the following properties:
  \begin{enumerate}
      \item $\zeta_H(s)$ has a pole of order~$b\geq 1$ at~$s=s_0:=a$,
      \item There exists real numbers~$0=\theta_0<\theta_1<\ldots<\theta_n<1$ such that~$\zeta_H(s)$ is holomorphic for~$\Re(s)> a-\varepsilon,s\not \in \left\{ a+\frac{2\pi i \theta_k}{\log(q)}+\frac{2\pi i m}{\log(q)}:k\in \{0,\ldots,n\},m\in \ZZ\right\}$.
      \item For every~$k\in \{1,\ldots,n\}$, $\zeta_H(s)$ is either holomorphic or has a pole of order~$\leq b$ at~$s=s_k:=a+\frac{2\pi i \theta_k}{\log(q)}$. 
  \end{enumerate}
  Then, for every~$M\geq 0$ the cardinality
 	$$\tilde{N}(X,H,q^M):=\#\{x\in X:H(x)=q^M\}$$
 	is finite and for each~$k\in \{0,\ldots,n\}$ there exists a polynomial~$Q_k(M)$ of degree~$b-1$ and leading coefficient~$r_k:=\lim_{s\to s_k}(s-s_k)^{b}\zeta_H(s)$ such that 
 \begin{equation*}
    \tilde{N}(X,H,q^M)=\frac{\log(q)^{b}}{(b-1)!}q^{aM}\sum_{k=0}^n e^{2\pi i \theta_kM}Q_k(M)+ O\left(q^{\delta M}\right) \quad \text{as }M\to \infty,M\in \ZZ,  
 \end{equation*}
for every~$\delta>a-\varepsilon$.
\end{thm}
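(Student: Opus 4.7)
The key observation driving the proof is that, since $H$ takes values in $q^{\mathbb{N}_0}$, we may group terms by the value of $H$ to write
$$\zeta_H(s) = \sum_{M \geq 0} \tilde{N}(X,H,q^M)\, q^{-Ms},$$
so that, setting $u = q^{-s}$, the series
$$Z(u) := \zeta_H(s) = \sum_{M \geq 0} \tilde{N}(X,H,q^M)\, u^M$$
is a power series in $u$. Absolute convergence for $\Re(s) > a$ translates into convergence of $Z(u)$ for $|u| < q^{-a}$, which in particular ensures that $\tilde{N}(X,H,q^M)$ is finite for every $M$. Moreover, the invariance of $\zeta_H$ under $s \mapsto s + 2\pi i/\log(q)$ shows that $Z$ is well-defined as a function of $u$, and the meromorphic extension hypothesis translates into $Z$ being meromorphic on an open neighborhood of the closed disk $|u|\leq q^{-(a-\varepsilon)}$, whose only poles lie on the circle $|u| = q^{-a}$, precisely at the points $u_k := q^{-a}e^{-2\pi i\theta_k}$ for $k=0,\ldots,n$, each of order at most $b$ (and of order exactly $b$ at $u_0$).

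The plan is then to decompose
$$Z(u) = \sum_{k=0}^n \sum_{j=1}^{b} \frac{c_{k,j}}{(u-u_k)^j} + W(u),$$
where $W$ is holomorphic on a neighborhood of $|u|\leq q^{-(a-\varepsilon)}$, and to extract Taylor coefficients using the geometric-series identity
$$\frac{1}{(u-u_k)^j} = \frac{(-1)^j}{u_k^j}\sum_{M \geq 0}\binom{M+j-1}{j-1}\left(\frac{u}{u_k}\right)^M.$$
Collecting the coefficient of $u^M$, the contribution from each pole $u_k$ is of the form $u_k^{-M}Q_k(M)$, where $Q_k$ is a polynomial in $M$ of degree at most $b-1$ determined by the $c_{k,j}$'s. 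Since $u_k^{-M} = q^{aM}e^{2\pi i\theta_k M}$, the sum over $k$ yields the main term of the asserted formula.

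To identify the leading coefficient of $Q_k$, I would exploit the change of variables $u=q^{-s}$, which gives $u - u_k = -\log(q)\,u_k(s-s_k) + O((s-s_k)^2)$ near $s_k$, so that
$$c_{k,b} = \lim_{u\to u_k}(u-u_k)^b Z(u) = (-\log(q))^b u_k^b\, r_k.$$
Feeding this back into the coefficient extraction, and using $\binom{M+b-1}{b-1} = M^{b-1}/(b-1)! + O(M^{b-2})$, produces the leading coefficient $\log(q)^b r_k/(b-1)!$ of $M^{b-1}$ in the $k$-th contribution, matching the normalization stated in the theorem (and automatically giving $Q_k \equiv 0$ whenever $s_k$ is not a pole). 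This bookkeeping step, though computationally routine, is the point most prone to sign errors and is what I would regard as the main place requiring care.

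For the error, fix $\delta > a - \varepsilon$; we may assume $\delta < a$, since otherwise the bound $O(q^{\delta M})$ already absorbs the main term. Choose $R$ with $q^{-\delta}\leq R < q^{-(a-\varepsilon)}$, which is possible thanks to $\delta > a-\varepsilon$. Then $W(u)$ is holomorphic on a neighborhood of $|u|\leq R$, and Cauchy's estimates give
$$|w_M| \leq \max_{|u|=R}|W(u)|\cdot R^{-M} = O(R^{-M}) = O(q^{\delta M}),$$
yielding the claimed error term. The one subtlety here is that one must use the strengthened meromorphy hypothesis on the closed half-plane $\Re(s)\geq a-\varepsilon$ (rather than merely the open one) in order to have $W$ holomorphic across the boundary circle; once this is noted, the remainder of the argument is purely a matter of combining the main term computation of the previous paragraph with the Cauchy bound.
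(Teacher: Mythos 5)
Your proposal is correct and takes essentially the same route as the paper. Both arguments proceed by passing to the power series $Z(u)=\sum_M \tilde{N}(X,H,q^M)u^M$ (with $u=q^{-s}$) and extracting coefficients by complex analysis around the circle $|u|=q^{-a}$: the paper does this via Cauchy's residue theorem on an annulus, while you use a partial-fraction decomposition at the poles $u_k$ plus Cauchy's coefficient estimate for the regular part $W$. These are two standard phrasings of the same computation; the residues at $T_k$ in the paper are exactly the coefficients read off from your partial fractions, and the paper's integral over the outer circle plays the role of your bound on $w_M$.

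One point to correct: your claim that $W$ is holomorphic ``on a neighborhood of the closed disk $|u|\leq q^{-(a-\varepsilon)}$'' is in general false. Hypothesis (2) forces all poles of $Z$ in the \emph{open} disk $|u|<q^{-(a-\varepsilon)}$ to lie on $|u|=q^{-a}$, but the meromorphy hypothesis on the closed half-plane permits poles of $Z$ (hence of $W$) on the boundary circle $|u|=q^{-(a-\varepsilon)}$. The paper handles these boundary poles explicitly (they contribute $O(M^{c-1}q^{(a-\varepsilon)M})$), while your argument sidesteps them because it only ever invokes the Cauchy estimate on a strictly smaller circle $|u|=R$ with $R<q^{-(a-\varepsilon)}$; that part is fine, and the estimate does go through. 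Relatedly, your remark that one ``must use the strengthened meromorphy hypothesis on the closed half-plane to have $W$ holomorphic across the boundary circle'' is not quite the right justification: holomorphy across the boundary is neither available nor needed, since $R<q^{-(a-\varepsilon)}$. Finally, when $s_k$ is a pole of order strictly between $0$ and $b$, the polynomial $Q_k$ is nonzero but of degree $<b-1$ with vanishing leading term, not identically zero as you state; this does not affect the final asymptotic.
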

\begin{proof}
For~$\Re(s)>a$ we have
$$\zeta_H(s)=\sum_{M=0}^{\infty}\tilde{N}(X,H,q^M)q^{-sM}.$$
This implies that~$\tilde{N}(X,H,q^M)$ is finite for every~$M\geq 0$. Now, put
\begin{equation}\label{eq Zet tauberian}
  \Zet_H(T):=\sum_{M=0}^{\infty}\tilde{N}(X,H,q^M)T^{M} 
\end{equation}
so that~$\Zet_H(q^{-s})=\zeta_H(s)$. By hypothesis, the series~$\Zet_H(T)$ converges absolutely for~$|T|< q^{-a}$ and it extends to a meromorphic function on~$|T|\leq q^{-(a-\varepsilon)}$ with poles contained in~$|T|\in \{q^{-a},q^{-(a-\varepsilon)}\}$. Since the closed disk~$|T|\leq q^{-(a-\varepsilon)}$ is compact, there exists~$\varepsilon'>\varepsilon$ such that~$\Zet_H(T)$ is meromorphic for~$|T|\leq q^{-(a-\varepsilon')}$ and has no poles on~$q^{-(a-\varepsilon)}<|T|\leq q^{-(a-\varepsilon')}$. 
Fix~$\eta\in \, ]0,q^{-a}[$ and consider the curves~$\gamma_1(t):=\eta e^{it}$ and~$\gamma_2(t):=q^{-(a-\varepsilon')} e^{it}$ for~$t\in [0,2\pi]$. 
Recall that~$s_k:=a+\frac{2\pi i \theta_k}{\log(q)}$ and put~$T_k:=q^{-s_k}$ for~$k\in \{0,\ldots,n\}$. Then, $T_0,\ldots,T_n$ are exactly the poles of~$\Zet_H(T)$ in~$|T|=q^{-a}$. We also denote by~$\{T_{n+1},\ldots,T_{n+m}\}$ the (possibly empty) set of poles of~$\Zet_H(T)$ in~$|T|=q^{-(a-\varepsilon)}$. Now, for every integer~$M>0$ we have, by Cauchy's residue theorem, the equality
\begin{equation}\label{eq Cauchy residue}
\frac{1}{2\pi i}\int_{\gamma_2}\frac{\Zet_H(T)}{T^{M+1}}\textup{d}t=\frac{1}{2\pi i}\int_{\gamma_1}\frac{\Zet_H(T)}{T^{M+1}}\textup{d}t+\sum_{k=0}^{n+m}\operatorname{Res}_{T=T_k}\left(\frac{\Zet_H(T)}{T^{M+1}}\right)
\end{equation}
On the one hand, since~$\Zet_H(T)$ is holomorphic for~$|T|=q^{-(a-\varepsilon')}$, we have
\begin{equation}\label{eq bound integra over gammadelta}
   \frac{1}{2\pi i}\int_{\gamma_2}\frac{\Zet_H(T)}{T^{M+1}}\textup{d}t=O\left(q^{(a-\varepsilon') M}\right) 
\end{equation}
with an implicit constant independent of~$M$. On the other hand, from~\eqref{eq Zet tauberian} we have
\begin{equation}\label{eq integral over gammaepsilon}
\frac{1}{2\pi i}\int_{\gamma_1}\frac{\Zet_H(T)}{T^{M+1}}\textup{d}t=\tilde{N}(X,H,q^M).    
\end{equation}
Next, for~$k\in \{0,\ldots,n\}$ we write
$$\Zet_H(T)=\sum_{j=-b}^{\infty}c_{k,j}(T-T_k)^j$$
with~$c_{k,j}\in \CC$ and recall the Taylor expansion
$$T^{-(M+1)}=\sum_{j=0}^{\infty}  {M+j \choose M}(-1)^jT_k^{-(M+j+1)} (T-T_k)^j,$$
in order to get
\begin{eqnarray*}
\operatorname{Res}_{T=T_k}\left(\frac{\Zet_H(T)}{T^{M+1}}\right)&=&-T_k^{-M}\sum_{j=1}^{b}c_{k,-j}{M+j-1 \choose M}(-1)^{j}T_k^{-j}\\
&=&-q^{Ms_k}\sum_{j=1}^{b}c_{k,-j}{M+j-1 \choose M}(-1)^{j}q^{js_k}. 
\end{eqnarray*}
Since~${M+j-1 \choose M}$ is a polynomial in~$M$ of degree~$j-1$ and leading term~$\frac{M^{j-1}}{(j-1)!}$, we obtain
$$\operatorname{Res}_{T=T_k}\left(\frac{\Zet_H(T)}{T^{M+1}}\right)=(-1)^{b+1}q^{Ms_k}\frac{P_k(M)}{(b-1)!}q^{bs_k},$$
where~$P_k(M)$ is a polynomial of degree~$b-1$ with leading coefficient~$c_{k,-b}$.
But
\begin{eqnarray*}
 c_{k,-b}&=& \lim_{T\to T_k}(T-T_k)^{b}\Zet_H(T)\\
 &=& \lim_{s\to s_k} (q^{-s}-q^{-s_k})^{b}\zeta_H(s)\\
 &=& \lim_{s\to s_k} \left(\frac{q^{-s}-q^{-s_k}}{s-s_k}\right)^{b}\lim_{s\to s_k}(s-s_k)^{b}\zeta_H(s)\\
 &=& (-\log(q)q^{-s_k})^{b}r_k,
\end{eqnarray*}
hence
\begin{equation}\label{eq residue computation}
    \operatorname{Res}_{T=T_k}\left(\frac{\Zet_H(T)}{T^{M+1}}\right)= -q^{Ms_k}\log(q)^{b}\frac{Q_k(M)}{(b-1)!}=-q^{Ma}e^{2\pi i \theta_kM}\log(q)^{b}\frac{Q_k(M)}{(b-1)!},
\end{equation}
where~$Q_k(M)$ is a polynomial of degree~$b-1$ with leading coefficient~$r_k$. Similarly, for~$k\in \{n+1,\ldots,n+m\}$ we have
\begin{equation}\label{eq residue computation2}
    \operatorname{Res}_{T=T_k}\left(\frac{\Zet_H(T)}{T^{M+1}}\right)= O\left(M^{c-1}q^{M(a-\varepsilon)}\right),
\end{equation}
for every integer~$c\geq 1$ larger than the maximum among the order of the poles of~$\Zet_H(s)$ on~$|T|=q^{-(a-\varepsilon)}$ (if~$\Zet_H(s)$ has no poles in that circle, then we choose~$c=1$).
Then, the desired results follows from~\eqref{eq Cauchy residue}, \eqref{eq bound integra over gammadelta}, \eqref{eq integral over gammaepsilon}, \eqref{eq residue computation} and~\eqref{eq residue computation2}, together with the fact that
$$M^{c-1}q^{M(a-\varepsilon)}= O\left(q^{\delta M}\right) \quad \text{as }M\to \infty,$$
for every~$\delta>a-\varepsilon$.  
This proves the theorem.
\end{proof}

\begin{remark}
\begin{enumerate}
    \item The third condition in Theorem~\ref{thm:tauberianThmFF} is equivalent to the existence of the limit~$r_k=\lim_{s\to s_k}(s-s_k)^{b}\zeta_H(s)$ for every~$k\in\{1,\ldots ,n\}$. In particular, there is no need to compute the exact order of~$\zeta_H(s)$ at each point~$s_k$. 
\item In all our applications of Theorem~\ref{thm:tauberianThmFF}, the points~$s_k=a+\frac{2\pi i \theta_k}{\log(q)}$ can be assumed to have~$\theta_k= \frac{k}{N}$ with~$k=0,\ldots,N-1$ for some integer~$N\geq 1$. This implies
 \begin{equation*}
    \tilde{N}(X,H,q^M)=\frac{\log(q)^{b}}{(b-1)!}Q(M)q^{aM}+ O\left(q^{\delta M}\right) \quad \text{as }M\to \infty,M\in \ZZ,  
 \end{equation*}
with
$$Q(M):=C_{b-1}(M)M^{b-1}+C_{b-2}(M)M^{b-2}+\ldots +C_0(M)$$
and~$C_k(M)\in \RR$ depending only on the residue class of~$M$ mod $N$. Moreover, the leading term has
$$C_{b-1}(M)=\sum_{k=0}^{N-1}e^{2\pi i kM/N}r_k=N\lim_{s\to a^+}(s-a)^b\sum_{\substack{m\geq 0 \\ m\equiv M \, (\text{mod }N)}}\tilde{N}(X,H,q^m)q^{-sm}.$$
In particular, if~$C_{b-1}(M_0)>0$ for a given integer~$M_0\in \{0,\ldots,N-1\}$, then one obtains
the asymptotic formula
$$\tilde{N}(X,H,q^M)\sim \left(\frac{\log(q)^{b}}{(b-1)!}C_{b-1}(M_0)\right)M^{b-1}q^{aM} \quad \text{as }M\to \infty,M\equiv M_0 \, (\text{mod }N).$$
Finally, note that the first condition in the above theorem implies that~$C_{b-1}(M_0)>0$ for at least one~$M_0\in \{0,\ldots,N-1\}$.
\end{enumerate}
\end{remark}
 
\subsection{Toric varieties}
\label{subsec:ToricVectorBundles}

We refer the reader to \cite{CoxLittleSchenckToricVarieties} for the general theory of toric varieties. Here, we simply recall that a toric variety is a normal and separated variety that contains a maximal torus $\TT$ as an open subset and admits an effective regular action of $\TT$ extending the natural action of this torus over itself.  The relevant toric varieties appearing in this paper are all smooth. We recall that on a smooth variety~$X$ every Weil divisor is Cartier, and in particular~$\operatorname{Cl}(X)\simeq \operatorname{Pic}(X)$.


\section{Hirzebruch--Kleinschmidt varieties}\label{section HK}
 
	Given integers $r\geq 1, t\geq 2, 0\leq a_1\leq \ldots\leq a_r,$ consider the vector bundle
	$$\mathcal{E}:=\CO_{\PP ^{t-1}}\oplus\CO_{\PP ^{t-1}}(a_1)\oplus\cdots\oplus\CO_{\PP ^{t-1}}(a_r).$$
 Associated to~$\mathcal{E}$ there is a projective bundle~$\pi:\mathbb{P}(\mathcal{E})\to \PP^{t-1}$. The variety~$\mathbb{P}(\mathcal{E})$ is smooth, projective and toric, of dimension  
    $$d:=\dim(\mathbb{P}(\mathcal{E}))=r+t-1,$$
    and has~$\operatorname{Pic}(\mathbb{P}(\mathcal{E}))\simeq \mathbb{Z}^2$ (see~\cite[Chapter~7]{CoxLittleSchenckToricVarieties} or~\cite[Section~3]{HMM24a}). Conversely, Kleinschmidt \cite{Kleinschmidt88} proved the following classification result of smooth projective\footnote{Actually, the classification in \cite{Kleinschmidt88} does not assume the projectivity hypothesis.} toric varieties of Picard rank 2, and gave explicit defining equations  in \cite[Theorem 3]{Kleinschmidt88}.

	\begin{thm}[Kleinschmidt]
\label{thm:EquationsHirzebruchKleinschmidtVar}
		Let $X$ be a smooth projective toric variety with $\Pic(X)\simeq \ZZ^2$. Then, there exist integers $r\geq 1, t\geq 2, 0\leq a_1\leq\cdots\leq a_r$ with $r+t-1=d=\dim(X)$ such that
		$$X\simeq \PP(\CO_{\PP ^{t-1}}\oplus\CO_{\PP ^{t-1}}(a_1)\oplus\cdots\oplus\CO_{\PP ^{t-1}}(a_r)).$$
  Moreover, if we write $I_k:=\{1,\ldots,k\}\subset \ZZ$, then $X$ is isomorphic to the subvariety~$X_d(a_1,\ldots,a_r)\subset \PP^{rt}\times \PP^{t-1}$ given in homogeneous coordinates $([x_0: (x_{ij})_{i\in I_t,j\in I_r}],[y_1:\ldots:y_t])$ by the equations
\begin{equation}\label{equations HK}
x_{mj}y^{a_j}_n=x_{nj}y^{a_j}_m ,\, \text{ for all } j\in I_{r} \text{ and all } m,n\in I_t \text{ with }m\neq n.   
\end{equation}
	\end{thm}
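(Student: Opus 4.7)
The plan is to split the proof into two parts: first, the intrinsic classification of $X$ as a projective bundle, which I would derive by a combinatorial analysis of its fan; and second, the concrete embedding into $\PP^{rt}\times \PP^{t-1}$ with the stated equations, which I would obtain from the Cox/quotient description of $X$ together with the projective bundle structure.

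For the classification, I would exploit that for a smooth complete toric variety $X$ with fan $\Sigma\subset N_\RR$, the exact sequence
\begin{equation*}
0 \longrightarrow M \longrightarrow \ZZ^{\Sigma(1)} \longrightarrow \Pic(X) \longrightarrow 0
\end{equation*}
implies $\#\Sigma(1)=\dim(X)+\rk\Pic(X)=d+2$. I would then carry out Kleinschmidt's combinatorial analysis of the primitive ray generators $v_1,\dots,v_{d+2}$: by projectivity there exists a strictly convex support function, and by smoothness every maximal cone is generated by a $\ZZ$-basis of $N$. A Gale-duality computation (or equivalently, an analysis of the primitive collections) shows that, up to a $\GL(N)$-change of coordinates, one can split the rays into two groups of sizes $t$ and $r+1$ (with $r+t=d+1$) so that one group generates a subfan isomorphic to the fan of $\PP^{t-1}$ and the other group gives the projectivization direction. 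The unique freedom left corresponds to nonnegative integers $a_1\leq\cdots\leq a_r$ (with negatives reduced away by the $\CO(1)$-shift inside $\PP(\mathcal{E})$), yielding the identification $X\simeq \PP(\CO\oplus\CO(a_1)\oplus\cdots\oplus\CO(a_r))$.

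For the explicit equations, I would use the Cox construction: the total coordinate ring of $X$ is $\FF_q[y_1,\dots,y_t,x_0,x_{11},\dots,x_{tr}]$ graded by $\Pic(X)\simeq\ZZ^2$, where $\deg y_m=(1,0)$, $\deg x_0=(0,1)$ and $\deg x_{ij}=(-a_j,1)$ (this is forced by the exact sequence above applied to the chosen ray generators). Realizing $X$ as the GIT quotient $(\A^{t+rt+1}\setminus Z)/(\GG_m^2)$ with the above weights, the natural map
\begin{equation*}
([x_0:(x_{ij})],[y_1:\dots:y_t])\longmapsto \text{image in }\PP^{rt}\times\PP^{t-1}
\end{equation*}
is a closed immersion, because the line bundle classes of bidegree $(0,1)$ on $\PP^{rt}$ and $(1,0)$ on $\PP^{t-1}$ are very ample on each factor and separate points/tangents on $X$. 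The incidence relations defining the image are exactly those expressing the compatibility between the sections $x_{mj}$ of $\CO(1)\otimes\pi^\ast\CO(a_j)$ and the sections $y_n^{a_j}$ of $\pi^\ast\CO(a_j)$, namely $x_{mj}y_n^{a_j}=x_{nj}y_m^{a_j}$ for $m\neq n$; verifying that these equations cut out the image scheme-theoretically reduces to a local check on the standard affine charts of the toric variety.

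The main obstacle is the combinatorial step that classifies the fans with $d+2$ rays under smoothness and projectivity: one must rule out all configurations not of the bundle form and bring the rays into the canonical shape producing exactly the direct sum $\CO\oplus\CO(a_1)\oplus\cdots\oplus\CO(a_r)$ with $0\leq a_1\leq\cdots\leq a_r$. Once that normal form is in hand, the Cox-ring description and the verification that the displayed equations define $X$ scheme-theoretically are essentially bookkeeping on the toric charts.
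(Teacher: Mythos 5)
The paper offers no proof of this theorem; it is attributed to Kleinschmidt, with both the classification and the explicit equations cited directly from \cite{Kleinschmidt88} (the equations from Theorem~3 there). Your sketch of the fan-theoretic classification---counting rays via $\#\Sigma(1)=d+2$, analyzing primitive collections/Gale duality, normalizing to a projective bundle with $0\leq a_1\leq\cdots\leq a_r$---is a reasonable outline of Kleinschmidt's original argument.

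However, your verification of the explicit embedding contains a concrete error. You observe, correctly, that a smooth complete toric variety has $\#\Sigma(1)=d+2=r+t+1$ rays and hence a Cox ring with that many generators, yet in the next paragraph you take the total coordinate ring of $X$ to be $\FF_q[y_1,\dots,y_t,x_0,x_{11},\dots,x_{tr}]$, which has $t+1+tr$ generators. This is internally inconsistent: you assign every $x_{1j},\dots,x_{tj}$ the same degree $(-a_j,1)$, whereas the Cox ring has exactly one variable per ray. The ambient coordinates $x_{ij}$ of $\PP^{rt}$ are not Cox variables of $X$; they are \emph{monomials} in the Cox variables. The correct Cox ring is $\FF_q[y_1,\dots,y_t,z_0,z_1,\dots,z_r]$ with $\deg y_m=(1,0)$, $\deg z_0=(0,1)$, $\deg z_j=(-a_j,1)$, and the map to $\PP^{rt}\times\PP^{t-1}$ is given by $x_0=z_0$, $x_{ij}=z_jy_i^{a_j}$ in the first factor and $[y_1:\cdots:y_t]$ in the second; the equations $x_{mj}y_n^{a_j}=x_{nj}y_m^{a_j}$ then hold identically, and the closed-immersion and scheme-theoretic-image checks proceed on the toric charts as you indicate. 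As written, though, your GIT description $(\A^{t+rt+1}\setminus Z)/\GG_m^2$ has dimension $t+rt-1$, which does not equal $d=r+t-1$ for $t\geq 2$, so it is not a quotient presentation of $X$.
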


Recall from the introduction that we define Hirzebruch--Kleinschmidt varieties exactly as the family of algebraic varieties~$X_d(a_1,\ldots,a_r)$ appearing in Theorem~\ref{thm:EquationsHirzebruchKleinschmidtVar}.


 \subsection{Effective divisors}


In this section we give a basis of~$\Pic(X)$ for a Hirzebruch--Kleinschmidt variety~$X$, and use it to describe the anticanonical class, the effective cone, and the big line bundle classes.

\begin{prop}\label{prop effective cone} 
		Let $X=X_d(a_1, \ldots,a_r)$ be a Hirzebruch--Kleinschmidt variety and let us denote by~$f$ the class of~$\pi^\ast \mathcal{O}_{\mathbb{P}^{t-1}}(1)$ and by~$h$ the class of~$\mathcal{O}_X(1)$, both in~$\Pic(X)$. 
  Then:
  \begin{enumerate}
  \item $\Pic(X)= \ZZ h \oplus \ZZ f$.
      \item The anticanonical divisor class of $X$ is given by
      $$-K_X=(r+1)h+\left(t-|\mathbf{a}| \right) f,$$
      where $|\mathbf{a}|=\sum_{i=1}^r a_i$.
      \item The cone of effective divisors of $X$ is given by
      $$\Lambda_{\textup{eff}}(X)=\{\gamma  h+\xi f:\gamma\geq 0, \xi\geq -\gamma a_r\}\subset \Pic(X)_\RR$$
      where $\Pic(X)_\RR:=\Pic(X)\otimes_{\ZZ}\RR$.
  \end{enumerate}
  \end{prop}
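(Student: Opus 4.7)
The plan is to treat the three items in turn, exploiting the projective bundle structure $X \cong \mathbb{P}(\mathcal{E})$ with $\mathcal{E} = \mathcal{O}_{\mathbb{P}^{t-1}} \oplus \bigoplus_{j=1}^{r} \mathcal{O}_{\mathbb{P}^{t-1}}(a_j)$. For part~(1), I would invoke the standard projective bundle formula: for any locally free sheaf of rank $\geq 2$ on a smooth projective variety $Y$, one has $\Pic(\mathbb{P}(\mathcal{E})) \cong \pi^\ast \Pic(Y) \oplus \mathbb{Z}\cdot [\mathcal{O}_{\mathbb{P}(\mathcal{E})}(1)]$. Specialising to $Y=\mathbb{P}^{t-1}$, whose Picard group is $\mathbb{Z}\cdot[\mathcal{O}(1)]$, immediately yields $\Pic(X)=\mathbb{Z}h\oplus\mathbb{Z}f$.

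For part~(2), I would use the relative canonical formula for a projective bundle,
$$\omega_{X/\mathbb{P}^{t-1}} \cong \mathcal{O}_X(-(r+1))\otimes \pi^\ast(\det \mathcal{E}),$$
together with $\det\mathcal{E}=\mathcal{O}_{\mathbb{P}^{t-1}}(|\mathbf{a}|)$ and the familiar $K_{\mathbb{P}^{t-1}}=-t\cdot\mathcal{O}(1)$. The relation $K_X = \omega_{X/\mathbb{P}^{t-1}} + \pi^\ast K_{\mathbb{P}^{t-1}}$ in $\Pic(X)$ then gives $K_X = -(r+1)h + (|\mathbf{a}|-t)f$, equivalent to the claim.

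For part~(3), I would use the standard toric fact that the effective cone of a smooth projective toric variety equals the cone in $\Pic(X)_\mathbb{R}$ spanned by the classes of the torus-invariant prime divisors (see, e.g., \cite{CoxLittleSchenckToricVarieties}). In $X$ these divisors split into two families: the $t$ pullbacks of the coordinate hyperplanes $\{y_i=0\}$ of $\mathbb{P}^{t-1}$, each of class $f$, and the $r+1$ ``fibre-coordinate'' divisors $E_0=\{x_0=0\}$ together with $E_j$ for $j\in I_r$, the latter defined locally by the vanishing of the coordinate in the $j$-th summand of $\mathcal{E}$. To identify their classes I would consider the global section $x_{kj}\in H^0(X,\mathcal{O}_X(h))$, a degree-$(1,0)$ coordinate of the bi-projective embedding: the defining equations $x_{kj}y_n^{a_j}=x_{nj}y_k^{a_j}$ force $x_{kj}$ to vanish to order $a_j$ along $\{y_k=0\}$, so $\mathrm{div}_X(x_{kj})=E_j+a_j\{y_k=0\}$ and therefore $[E_j]=h-a_jf$; an analogous (but simpler) argument gives $[E_0]=h$. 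Since $0\leq a_1\leq\cdots\leq a_r$, each of $h$ and $h-a_jf$ lies in the convex cone generated by $f$ and $h-a_rf$, so the effective cone collapses to $\{\gamma h+\xi f:\gamma\geq 0,\ \xi\geq -\gamma a_r\}$.

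The main obstacle I expect is the divisor-class computation $[E_j]=h-a_jf$, which requires carefully reading off the order of vanishing of $x_{kj}$ along $\{y_k=0\}$ from the equations of Theorem~\ref{thm:EquationsHirzebruchKleinschmidtVar}; once this is in hand, the remainder of (3) reduces to a two-dimensional convex geometry calculation in $\Pic(X)_\mathbb{R}$.
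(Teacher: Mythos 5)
Your proof is correct and takes a genuinely different, more self-contained route than the paper. The paper's proof imports the analogous statement for the ``normalized'' projective bundle $Y=\PP(\CO\oplus\CO(-a_r)\oplus\cdots\oplus\CO(a_{r-1}-a_r))$ from the companion paper~\cite[Proposition~3.4]{HMM24a}, where the effective cone is simply the first quadrant, and then transports $\Pic(Y)$, $-K_Y$, and $\Lambda_{\textup{eff}}(Y)$ to $X$ via the twist isomorphism $\PP(\CE)\simeq\PP(\CE\otimes\CO(a_r))$ and the standard dictionary $h_Y\leftrightarrow h-a_rf$. You instead establish all three claims directly: (1) by the projective bundle formula for $\Pic$, (2) by the relative Euler sequence giving $\omega_{X/\PP^{t-1}}=\CO_X(-(r+1))\otimes\pi^\ast\det\CE$, and (3) by the toric fact that $\Lambda_{\textup{eff}}$ is spanned by the torus-invariant prime divisors, with the crucial computation $[E_j]=h-a_jf$ read off from the defining equations $x_{mj}y_n^{a_j}=x_{nj}y_m^{a_j}$. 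Your computation of $[E_j]$ is consistent (indeed, $\sum_i[D_i]+[E_0]+\sum_j[E_j]=tf+h+\sum_j(h-a_jf)=(r+1)h+(t-|\mathbf{a}|)f=-K_X$ confirms it), and since $0\leq a_1\leq\cdots\leq a_r$ the generators $f,h,h-a_1f,\ldots,h-a_rf$ collapse to the cone spanned by $f$ and $h-a_rf$, as required. The trade-off is transparency versus brevity: your version avoids any external reference and makes the geometry explicit, whereas the paper's one-line reduction to~\cite{HMM24a} is shorter but opaque without that source.
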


\begin{proof} The analogue statement for the variety
$$Y:=\PP(\CO_{\PP ^{t-1}}\oplus\CO_{\PP ^{t-1}}(-a_r)\oplus\cdots\oplus\CO_{\PP ^{t-1}}(a_{r-1}-a_r))$$
is given in~\cite[Proposition~3.4]{HMM24a}. More precisely, putting~$f_Y:=\pi_Y^\ast \mathcal{O}_{\mathbb{P}^{t-1}}(1)$ and~$h_Y:=\mathcal{O}_Y(1)$, where~$\pi_Y:Y\to \PP^{t-1}$ is the natural projection, it is proven that
\begin{eqnarray*}
    \Pic(Y)&= & \ZZ h_Y \oplus \ZZ f_Y,\\
    -K_Y&=&(r+1)h_Y+\left((r+1)a_r+t-|\mathbf{a}| \right) f_Y, \text{ and}\\
    \Lambda_{\textup{eff}}(Y)&=&\{\gamma  h_Y+\xi f_Y:\gamma\geq 0, \xi\geq 0\}.
\end{eqnarray*}
Now, under the canonical isomorphism of projective bundles
$$Y\simeq \PP((\CO_{\PP ^{t-1}}\oplus\CO_{\PP ^{t-1}}(-a_r)\oplus\cdots\oplus\CO_{\PP ^{t-1}}(a_{r-1}-a_r))\otimes \CO_{\PP ^{t-1}}(a_r))=X_d(a_1,\ldots,a_r),$$
the line bundle classes~$-K_Y,f_Y$ and~$h_Y$ correspond to~$-K_X,f$ and~$h-a_rf$, respectively (see, e.g., \cite[Chapter V, Corollary~2.18]{Har77}). Then, simple computations lead to the desired results. We omit the details for brevity.
\end{proof}

It follows from~\cite[Theorem~2.2.26]{LazPAGI} that a line bundle class in~$\Pic(X_d(a_1,\ldots,a_r))$ is  big if and only if it lies in the interior of the effective cone~$\Lambda_{\textup{eff}}(X)$. Hence, we get the following corollary from Proposition~\ref{prop effective cone}.

\begin{cor}\label{cor big divisors}
  Let~$L=\gamma h+\xi f$ with~$\gamma,\xi\in \ZZ$, where~$\{h,f\}$ is the basis of~$\Pic(X_d(a_1,\ldots,a_r))$ given in Proposition~\ref{prop effective cone}. Then, $L$ is big if and only if~$\gamma>0$ and~$\xi>-\gamma a_r$.
\end{cor}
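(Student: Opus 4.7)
The plan is to invoke the characterization of big line bundle classes as interior points of the effective cone, and then identify this interior using the explicit description of $\Lambda_{\textup{eff}}(X)$ provided by Proposition~\ref{prop effective cone}(3).

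First, I would quote \cite[Theorem~2.2.26]{LazPAGI}: a line bundle class $L \in \Pic(X)_\RR$ is big if and only if it lies in the interior $\Lambda_{\textup{eff}}(X)^{\circ}$ of the pseudo-effective cone, and since $X$ is a smooth projective toric variety the effective, pseudo-effective, and big cones are well behaved, so $\Lambda_{\textup{eff}}(X)^\circ$ coincides with the set of big classes in $\Pic(X)_\RR$. (On a Mori dream space, or more generally any variety with finitely generated effective cone, the closure of the big cone equals the pseudo-effective cone, which in our toric setting equals $\Lambda_{\textup{eff}}(X)$.) Hence it suffices to describe the topological interior of
\[
\Lambda_{\textup{eff}}(X) = \{\gamma h + \xi f : \gamma \geq 0,\ \xi + \gamma a_r \geq 0\} \subset \Pic(X)_\RR \cong \RR^2,
\]
where the isomorphism uses the basis $\{h,f\}$ from Proposition~\ref{prop effective cone}(1).

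Next, I would note that the two linear functionals cutting out $\Lambda_{\textup{eff}}(X)$, namely $\gamma h + \xi f \mapsto \gamma$ and $\gamma h + \xi f \mapsto \xi + \gamma a_r$, are linearly independent on $\Pic(X)_\RR$ (their matrix $\bigl(\begin{smallmatrix}1 & 0 \\ a_r & 1\end{smallmatrix}\bigr)$ has determinant $1$). It follows that $\Lambda_{\textup{eff}}(X)$ is a two-dimensional polyhedral cone whose facets are precisely the two rays $\{\gamma = 0\}$ and $\{\xi + \gamma a_r = 0\}$. Standard convex geometry then identifies the interior as the locus where both defining inequalities are strict, i.e.
\[
\Lambda_{\textup{eff}}(X)^{\circ} = \{\gamma h + \xi f \in \Pic(X)_\RR : \gamma > 0,\ \xi > -\gamma a_r\}.
\]

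Finally, restricting to the integral lattice $\Pic(X) = \ZZ h \oplus \ZZ f$ yields the claimed equivalence for $L = \gamma h + \xi f$ with $\gamma,\xi \in \ZZ$. The proof is almost entirely bookkeeping; the only mild subtlety, which I would mention explicitly, is the passage between ``big'' and ``interior of the pseudo-effective cone,'' but this is immediate here because the Mori cone of $X$ is simplicial (as $X$ is toric of Picard rank two), so the pseudo-effective cone is closed and equal to $\Lambda_{\textup{eff}}(X)$, and big classes are exactly its interior points.
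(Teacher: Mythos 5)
Your proposal is correct and takes essentially the same approach as the paper: the paper likewise cites \cite[Theorem~2.2.26]{LazPAGI} to identify big classes with the interior of the effective cone, and then simply reads off the strict inequalities from the description in Proposition~\ref{prop effective cone}(3). You spell out the convex-geometry bookkeeping (linear independence of the defining functionals, hence full-dimensionality) and the effective-versus-pseudo-effective subtlety more explicitly; the one small imprecision is that the simpliciality of the Mori cone is not quite the right reason the pseudo-effective cone is closed here---what matters is that the effective cone of a toric variety is rational polyhedral (generated by torus-invariant prime divisors), hence already closed---but this does not affect the validity of the argument.
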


\subsection{Peyre's~$\alpha^\ast$-constant}\label{section Peyre alpha}

Let us recall that Peyre's $\alpha^\ast$-constant of an almost Fano\footnote{Every smooth projective toric variety is almost Fano.} variety~$X$ over a global function field can be defined as
\[
\alpha^\ast(X)=\int_{\Lambda_{\textup{eff}}(X)^\vee} e^{-\left \langle -K_X,\mathbf{y}\right \rangle }\textup{d}\mathbf{y},
\]
where~$\langle \cdot ,\cdot \rangle$ denotes the natural pairing~$\Pic(X)_\RR \times \Pic(X)_\RR^\vee \to \RR$ and $\textup{d}\mathbf{y}$ denotes the Lebesgue measure on~$\Pic(X)_\RR^\vee $ normalized to give covolume 1 to the lattice~$\Pic(X)^\vee$ (see \cite[\emph{Proposition}~3.5]{Pey12PHB}). 
Then, a straightforward computation using Proposition~\ref{prop effective cone} gives us the following result.

 \begin{lem}
 \label{lem:calculusAlpha}
     Let $X=X_d(a_1,\ldots,a_r)$ be a Hirzebruch--Kleinschmidt variety. Then, its $\alpha^\ast$-constant is given by
     \[
     \alpha^\ast(X)=\int_{0}^{\infty}\int_{a_r y_2}^{\infty} e^{-(r+1)y_1-(t-|\mathbf{a}|)y_2}\textup{d}y_1\, \textup{d}y_2 =\frac{1}{(r+1)\left((r+1)a_r+t- |\mathbf{a}|\right) }.
     \]
 \end{lem}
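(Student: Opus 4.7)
The plan is to explicitly compute the dual cone $\Lambda_{\textup{eff}}(X)^\vee$ using the description of the effective cone from Proposition~\ref{prop effective cone}(3), substitute the expression for the anticanonical class from Proposition~\ref{prop effective cone}(2), and evaluate an elementary iterated integral.

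First, working with the basis $\{h,f\}$ of $\Pic(X)$ from Proposition~\ref{prop effective cone}(1) and its dual basis $\{h^\vee,f^\vee\}$ of $\Pic(X)^\vee$, I would write a general element of $\Pic(X)_\RR^\vee$ as $\mathbf{y}=y_1 h^\vee+y_2 f^\vee$. From the description $\Lambda_{\textup{eff}}(X)=\{\gamma h+\xi f:\gamma\geq 0,\ \xi\geq -\gamma a_r\}$, I read off that the two extremal rays are generated by $f$ and $h-a_r f$. The dual cone is therefore cut out by the two inequalities obtained by pairing with these extremal generators, namely
\[
\langle f,\mathbf{y}\rangle = y_2 \geq 0, \qquad \langle h-a_r f,\mathbf{y}\rangle = y_1-a_r y_2 \geq 0,
\]
so $\Lambda_{\textup{eff}}(X)^\vee=\{(y_1,y_2)\in\RR^2:\ y_2\geq 0,\ y_1\geq a_r y_2\}$.

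Since the lattice $\Pic(X)^\vee$ is identified with $\ZZ^2$ in the chosen basis, the normalized measure $\textup{d}\mathbf{y}$ coincides with Lebesgue measure $\textup{d}y_1\,\textup{d}y_2$. Combining this with $\langle -K_X,\mathbf{y}\rangle=(r+1)y_1+(t-|\mathbf{a}|)y_2$ from Proposition~\ref{prop effective cone}(2) yields precisely the iterated integral appearing in the statement.

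For the second equality, I would simply evaluate: the inner integral
\[
\int_{a_r y_2}^{\infty} e^{-(r+1)y_1}\,\textup{d}y_1 = \frac{1}{r+1}e^{-(r+1)a_r y_2}
\]
converges since $r+1>0$, and the resulting outer integral
\[
\frac{1}{r+1}\int_0^\infty e^{-((r+1)a_r+t-|\mathbf{a}|)y_2}\,\textup{d}y_2
\]
converges because $(r+1)a_r\geq |\mathbf{a}|$ (as $a_i\leq a_r$ for every $i$) and $t\geq 2$, so the exponent is strictly positive. This gives the closed form $\frac{1}{(r+1)((r+1)a_r+t-|\mathbf{a}|)}$, completing the proof. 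The computation is entirely routine; the only mildly delicate point is identifying the two extremal rays of $\Lambda_{\textup{eff}}(X)$ correctly, so that the dual cone has the simple form used above.
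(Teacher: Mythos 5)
Your proof is correct and carries out exactly the ``straightforward computation'' that the paper merely asserts: the paper states the lemma without a written proof beyond citing Proposition~\ref{prop effective cone}, and the integral in the statement is precisely the one you derive by dualizing the effective cone and pairing with $-K_X$. Your identification of the extremal rays $f$ and $h-a_r f$, the resulting description of $\Lambda_{\textup{eff}}(X)^\vee$, the normalization of $\textup{d}\mathbf{y}$, and the positivity check $(r+1)a_r + t - |\mathbf{a}| \geq t > 0$ ensuring convergence are all accurate, so this is a faithful filling-in of the omitted details rather than a different argument.
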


\section{Height zeta function of the projective space}\label{sec height zeta function of projective spaces}

For an integer~$n\geq 1$, we consider in this section the projective space~$\PP^n$ over~$K$ with \emph{standard height function}
\begin{equation}\label{eq standard height projective}
 H_{\PP^n}([x_0:\ldots:x_n]):=\prod_{v\in \Val(K)}\sup\{ |x_0|_v,|x_1|_v,\ldots, |x_n|_v\},   
\end{equation}
and \emph{standard height zeta function}
$$\zeta_{\PP^n}(s):=\sum_{P\in \PP^n(K)}H_{\PP^n}(P)^{-s} \quad \text{for }s\in \CC \text{ with }\Re(s)\gg 0.$$
The goal of this section is to state the following well-known theorem, for which we give a proof for convenience of the reader.

\begin{thm}\label{thm height zeta function projective}
The height zeta function~$\zeta_{\PP^n}(s)$ converges absolutely for~$\Re(s)>n+1$ and it is a rational function in~$q^{-s}$. Moreover, it has a simple pole at~$s=n+1$ with residue
$$\operatorname{Res}_{s=n+1}\zeta_{\PP^n}(s)=\frac{h_Kq^{(n+1)(1-g)}}{\zeta_K(n+1)(q-1)\log(q)},$$
and it is holomorphic in~$\Re(s)>\frac{1}{2},s\not \in \left\{n+1+\frac{2\pi i m}{\log(q)}:m\in \ZZ\right\}$.
\end{thm}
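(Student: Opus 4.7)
The plan is to derive an explicit expression of the form $\zeta_{\PP^n}(s) = \Psi(s)/((q-1)\zeta_K(s))$, with $\Psi(s)$ a concrete rational function of $q^{-s}$, and then read off the analytic behaviour from this identity.

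First I would parametrize $\PP^n(K)$ geometrically: each $P\in \PP^n(K)$ corresponds to a morphism $\phi_P\colon \mathcal{C}\to \PP^n$ over $\F_q$, equivalently to a line bundle $\mathcal{L}:=\phi_P^{\ast}\mathcal{O}_{\PP^n}(1)$ on $\mathcal{C}$ together with $n+1$ sections $s_0,\ldots,s_n\in H^0(\mathcal{C},\mathcal{L})$ without common zero, taken up to the $\F_q^{\times}$-action. A direct computation using the product formula~\eqref{eq product formula} shows $H_{\PP^n}(P)=q^{\deg \mathcal{L}}$, so grouping points by the class $c\in \operatorname{Cl}(K)\simeq \operatorname{Pic}(\mathcal{C})$ of $\mathcal{L}$ gives
\[ \zeta_{\PP^n}(s) = \sum_{c\in \operatorname{Cl}(K)} b_c\, q^{-s\deg c}, \qquad b_c := \#\{P\in \PP^n(K): [\phi_P^{\ast}\mathcal{O}(1)]=c\}. \]
To compute $b_c$ I would apply Möbius inclusion-exclusion over the closed points of $\mathcal{C}$, excluding the zero tuple so that the argument reduces to a finite identity: the result is
\[ (q-1)\,b_c \;=\; \sum_{\substack{E\geq 0 \\ \text{squarefree}}} (-1)^{\#\operatorname{supp}(E)}\bigl(q^{(n+1)\ell(c-E)}-1\bigr), \]
where only finitely many terms are nonzero since $\ell(c-E)=0$ once $\deg E>\deg c$.

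Substituting into $\zeta_{\PP^n}(s)$ and interchanging the sums over $c$ and $E$ (justified by absolute convergence in $\Re(s)>n+1$ via Riemann--Roch), the outer sum in $E$ turns into the Euler product
\[ \sum_{\substack{E\geq 0\\ \text{squarefree}}}(-1)^{\#\operatorname{supp}(E)}q^{-s\deg E}=\prod_{v\in \operatorname{Val}(K)}(1-q^{-sf_v})=\frac{1}{\zeta_K(s)}, \]
yielding the key identity
\[ \zeta_{\PP^n}(s) \;=\; \frac{\Psi(s)}{(q-1)\,\zeta_K(s)}, \qquad \Psi(s) := \sum_{c\in \operatorname{Cl}(K)}\bigl(q^{(n+1)\ell(c)}-1\bigr)q^{-s\deg c}. \]
Evaluating $\Psi(s)$ is then straightforward: the classes of degree $\leq 2g-2$ contribute a finite, hence entire, piece, while for degree $m>2g-2$ one has $\ell(c)=m+1-g$ by Riemann--Roch and exactly $h_K$ classes per degree, producing the sum of two geometric series in $q^{-s}$. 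In particular, $\Psi(s)$ and $\zeta_{\PP^n}(s)$ are rational in $q^{-s}$, and $\Psi(s)$ has only simple poles, located at $s\equiv n+1$ and $s\equiv 0$ modulo $\tfrac{2\pi i}{\log q}\ZZ$.

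To finish, I would verify the cancellation of the spurious poles of $\Psi(s)$ at $s\equiv 0$ against the simple poles of $\zeta_K(s)$ at the same points. Combined with the Riemann hypothesis for curves over finite fields (Weil), which places all zeros of $\zeta_K(s)$ on $\Re(s)=\tfrac{1}{2}$, this gives holomorphy of $\zeta_{\PP^n}(s)$ on $\Re(s)>\tfrac{1}{2}$ except at the simple poles $s=n+1+\tfrac{2\pi im}{\log q}$, $m\in\ZZ$. Extracting $\operatorname{Res}_{s=n+1}\Psi(s)=h_K q^{(n+1)(1-g)}/\log q$ from the leading geometric series and dividing by $(q-1)\zeta_K(n+1)$ then yields the claimed residue. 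The main technical obstacle is justifying the interchange of summations together with the emergence of the Euler product; both follow from the absolute convergence of the double sum in $\Re(s)>n+1$, which I would establish via Riemann--Roch estimates on $\ell(c)$.
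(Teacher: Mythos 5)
Your proof is correct, and it takes a genuinely different route from the paper's. The paper's proof is short and modular: it cites Wan's rationality theorem (\cite[Theorem~3.2]{Wan92}) for the rationality in $q^{-s}$ and Wan's counting estimate~\eqref{eq Wan's estimate} for the asymptotics of $\tilde{N}(\PP^n,q^d)$, then reads off the pole, the residue and the half-plane of holomorphy from the decomposition $\zeta_{\PP^n}(s)=\tfrac{h_Kq^{(n+1)(1-g)}}{\zeta_K(n+1)(q-1)}\cdot\tfrac{1}{1-q^{(n+1)-s}}+F(s)$ with $F$ analytic on $\Re(s)>\tfrac12$. You instead give a self-contained derivation (essentially reproving Wan's result): parametrize $\PP^n(K)$ by pairs $(\mathcal{L},(s_i))$ with $s_i\in H^0(\mathcal{C},\mathcal{L})$ having no common zero modulo $\F_q^\times$, count via inclusion--exclusion over closed points of $\mathcal{C}$, and recognize the squarefree alternating sum as $1/\zeta_K(s)$, arriving at the closed form $\zeta_{\PP^n}(s)=\Psi(s)/((q-1)\zeta_K(s))$ with $\Psi(s)=\sum_c(q^{(n+1)\ell(c)}-1)q^{-s\deg c}$. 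This makes the pole structure and residue transparent by inspection of two geometric series. Both arguments ultimately depend on the Riemann hypothesis for curves (Weil): the paper via the $q^{d(1/2+\varepsilon)}$ error term in~\eqref{eq Wan's estimate}, you explicitly to place the zeros of $\zeta_K$ on $\Re(s)=\tfrac12$ so that $1/\zeta_K$ introduces no new singularities in $\Re(s)>\tfrac12$. One small point worth being explicit about in a written-up version: the absolute convergence that licenses the interchange of the $c$- and $E$-sums follows from $\ell(c-E)\leq\deg(c-E)+1$ (so each term is $\leq q^{(n+1)(\deg c-\deg E+1)}$), together with the $O(q^{k})$ bound on the number of effective divisors of degree $k$; for $n\geq 1$ the inner geometric sum in $\deg E$ converges and the outer sum converges iff $\Re(s)>n+1$.
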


In the proof of Theorem~\ref{thm height zeta function projective} below, we use the following estimate for
$$\tilde{N}(\PP^n,q^d):=\#\{P\in \PP^n(K):H_{\PP^n}(P)=q^d\}$$
due to Wan (\cite[Corollary~4.3]{Wan92}\footnote{There is a misprint in the exponent of the error term in~\cite[Equation~(4.8)]{Wan92}. By~\cite[Theorem~4.1]{Wan92} this error term is bounded above by~$q^{\frac{d}{2}}$ times a polynomial in~$d$.}): For every~$\varepsilon>0$ we have
\begin{equation}\label{eq Wan's estimate}
    \tilde{N}(\PP^n,q^d)=\frac{h_Kq^{(n+1)(1-g)}}{\zeta_K(n+1)(q-1)}q^{d(n+1)}+O\left(q^{d\left(\frac{1}{2}+\varepsilon\right)}\right) \quad \text{as }d\to \infty, d\in \ZZ.
\end{equation}

\begin{proof}[Proof of Theorem~\ref{thm height zeta function projective}]
The fact that~$\zeta_{\PP^n}(s)$ is a rational function in~$q^{-s}$ follows from~\cite[Theorem~3.2]{Wan92}. Now, 
by definition of the standard height function~\eqref{eq standard height projective}, we have 
$$\zeta_{\PP^n}(s)=\sum_{d\geq 0}\tilde{N}(\PP^n,q^d)q^{-ds}.$$
Then, \eqref{eq Wan's estimate} implies that~$\zeta_{\PP^n}(s)$ converges absolutely for~$\Re(s)>n+1$, and moreover
$$\zeta_{\PP^n}(s)=\frac{h_Kq^{(n+1)(1-g)}}{\zeta_K(n+1)(q-1)}\left(\frac{1}{1-q^{(n+1)-s}}\right)+F(s),$$
with~$F(s)$ an analytic function for~$\Re(s)>\frac{1}{2}$. In particular, $\zeta_{\PP^n}(s)$ is analytic in~$\Re(s)>\frac{1}{2},s\not \in \left\{n+1+\frac{2\pi i m}{\log(q)}:m\in \ZZ\right\}$. 
 We also get
$$\operatorname{Res}_{s=n+1}\zeta_{\PP^n}(s)=\frac{h_Kq^{(n+1)(1-g)}}{\zeta_K(n+1)(q-1)}\left(\lim_{s\to n+1}\frac{s-(n+1)}{1-q^{(n+1)-s}}\right)=\frac{h_Kq^{(n+1)(1-g)}}{\zeta_K(n+1)(q-1)\log(q)}.$$
This shows the desired results and completes the proof of the theorem.
\end{proof}

 \section{Height zeta functions of Hirzebruch--Kleinschmidt varieties}\label{sec height zeta HK}

In this section, we use the explicit equations~\eqref{equations HK} defining our Hirzebruch--Kleinschmidt varieties in order to study the analytic properties of their height zeta functions. To do this, we first present some lemmas concerning Möbius inversion, supremum of divisors and relations between different zeta functions. We will first focus on the case~$a_r>0$ in the family of varieties~$X=X_d(a_1,\ldots, a_r)$, and later treat the much easier case where~$a_r=0$, since then~$X\simeq \PP^{r}\times \PP^{t-1}$ and the  relevant height zeta functions are just products of height zeta functions of projective spaces.

In the notation of Proposition \ref{prop effective cone}, let $L=\gamma h+\xi f$ be the class of a big line bundle. Then, the height function induced by $L$ on the set of rational points of $X_d(a_1,\ldots,a_r)$  is 
\begin{equation}\label{eq height function HL}
 H_L\left( ([x_0: (x_{ij})_{i\in I_t,j\in I_r}],[y_1:\ldots:y_t])\right):=\prod_{v\in \Val(K)}\sup_{i\in I_{t}, j\in I_r}\left\lbrace|x_0|_v,|x_{ij}|_v  \right\rbrace ^{\gamma}\prod_{v\in \Val(K)}\sup_{i\in I_t} \left\lbrace|y_i|_v  \right\rbrace^{\xi }.   
\end{equation}
It is easy to see that this height function is associated to an adelic metrization of~$L$ in the sense of~\cite[Secion~1.2]{Pey12PHB}. We refer to this as the ``standard metrization'' of~$L$.

As explained in the introduction, to carry out the analysis of the corresponding height zeta functions, we find it necessary to remove a closed subset in order to avoid convergence issues, due to accumulation of rational points of bounded height. 
For this purpose, the following definition was given in the introduction.

\begin{defi}\label{defi:good open} 
Given integers~$r\geq 1$, $t\geq 2$ and $0\leq a_1 \leq \cdots \leq a_r$ defining the Hirzebruch--Kleinschmidt variety~$X_d(a_1,\ldots,a_r)$ as in Theorem~\ref{thm:EquationsHirzebruchKleinschmidtVar}, we define the \emph{good open subset} $U_d(a_1,\ldots,a_r)\subset X_d(a_1,\ldots,a_d)$ as the complement of the closed subvariety defined by the equation~$x_{tr}=0$. 
\end{defi}

\subsection{Height zeta functions}\label{subsection general height zeta functions}

Let~$L=\gamma h+\xi f\in \Pic(X_d(a_1,\ldots,a_r))$ big. The \emph{height zeta function} associated to~$H_L$ over the good open subset~$U:=U_d(a_1,\ldots,a_r)$ is defined as
	$$\zeta_{U,L}(s):=\sum_{x\in U(K)} H_L(x)^{-s}.$$

Associated to~$L$ as above, we also define
     \begin{equation}\label{eq def of AL and BL}
        A_L:=\frac{r+1}{\gamma}, \quad B_L:=\frac{(r+1)a_r-|\mathbf{a}|  +t}{ \gamma a_r+\xi},
     \end{equation}
and put
\begin{eqnarray*}
    \mathcal{A}_L&:=& \left\{A_L+\frac{2\pi i k}{\gamma \log(q)}:k\in\{1,\ldots,\gamma-1\}\right\},\\
    \mathcal{B}_L&:=& \left\{B_L+\frac{2\pi i k}{(\gamma a_r+\xi) \log(q)}:k\in\{1,\ldots,\gamma a_r+\xi-1\}\right\}.
\end{eqnarray*}
Note that by Proposition~\ref{prop effective cone} we have
\begin{equation}\label{eq a(L) and b(L)}
     a(L)=\max\{A_L,B_L\}\quad \text{and}\quad b(L)=\left\{\begin{array}{ll}
2     &  \text{if }A_L=B_L, \\
1     &  \text{if }A_L\neq B_L.
\end{array}
\right.
\end{equation}

Below we present the first main result of this section, which describes the analytic properties of the function $\zeta_{U,L}(s)$ in the case where~$a_r>0$. First, we note that if~$L\in \Pic(X_d(a_1,\ldots,a_r))$ is big and~$m\geq 1$ is an integer, then~$mL$ is also big and~$H_{mL}=H_L^m$. Moreover, we have the equality of height zeta functions~$\zeta_{U,mL}(s)=\zeta_{U,L}(ms)$. In particular, the analytic properties of~$\zeta_{U,mL}(s)$ can be deduced from those of~$\zeta_{U,L}(s)$. This shows that we can restrict ourselves to the case where~$L$ is primitive in~$\Pic(X_d(a_1,\ldots,a_r))$. Writing~$L=\gamma h+\xi f$, this is equivalent to the condition~$\mathrm{g.c.d.}(\gamma,\xi)=1$. 

In order to state the next theorem, we define
$$\R_K(a,b):= \sum_{D\geq 0}q^{-(a\ell(D)+b\deg(D))} \quad \text{for }a,b\in \CC \text{ with }\Re(a+b)>1.$$ 
Note that the Riemann--Roch theorem (see Section~\ref{subsection arithmetic tools}) implies the equality
\begin{equation}\label{eq identity Rk}
    \R_K(a,b)= \sum_{\substack{D\geq 0 \\ \deg(D)\leq 2g-2}}q^{-(a\ell(D)+b\deg(D))}+q^{a(g-1)}\sum_{\substack{D\geq 0\\ \deg(D)> 2g-2}}q^{-(a+b)\deg(D)}.
\end{equation}
Hence, $\R_K(a,b)=q^{a(g-1)}\zeta_K(a+b)+\bS_K(a,b)$ with the  finite sum
$$\bS_K(a,b):=\sum_{\substack{D\geq 0\\ \deg(D)\leq 2g-2}}(q^{-(a\ell(D)+b\deg(D))}-q^{a(g-1)-(a+b)\deg(D)}).$$
In particular, $\R_K(a,b)=q^{-a}\zeta_K(a+b)$ if~$g=0$.

 
	\begin{thm}\label{thm:ThmGeneralversion}
	    Let~$X:=X_d(a_1,\ldots,a_r)$ be a Hirzebruch--Kleinschmidt variety over the global function field~$K=\F_q(\mathcal{C})$ with~$a_r>0$. Moreover, let~$U:=U_d(a_1,\ldots,a_r)$ be the  good open subset of~$X$, let~$L=\gamma h+\xi f\in \Pic(X)$ be a big and primitive line bundle class and let~$\zeta_{U,L}(s)$ be the associated  height zeta function. Then,  $\zeta_{U,L}(s)$ converges absolutely for~$\Re(s)>a(L)$ and is a rational function in~$q^{-s}$. Moreover, the following properties hold:
     \begin{enumerate}
         \item If~$A_L=B_L$, then $\zeta_{U,L}(s)$ is holomorphic for
    $$\Re(s)>\max\left\{A_L-\frac{1}{\gamma},B_L-\frac{1}{\gamma a_r+\xi}\right\},s\not \in \left\{w+\frac{2\pi im}{\log(q)}:w\in  \{a(L)\}\cup \mathcal{A}_L\cup\mathcal{B}_L, m\in \ZZ\right\},$$
    it  has a pole of order~$b(L)=2$ at~$s=a(L)$ and satisfies
    $$\lim_{s\to w}(s-w)^{2}\zeta_{U,L}(s)=\left\{
    \begin{array}{ll}
      \frac{q^{(d+2)(1-g)} h_K^2}{\zeta_K(t)\zeta_K(r+1)(\gamma a_r+\xi)\gamma (q-1)^2\log(q)^2}   & \text{if }w=a(L),  \\
        0 & \text{if }w\in  \mathcal{A}_L\cup \mathcal{B}_L.
    \end{array}\right.$$
       \item If~$A_L<B_L$, then $\zeta_{U,L}(s)$ is holomorphic for
    $$\Re(s)>\max\left\{A_L,B_L-\frac{1}{\gamma a_r+\xi}\right\},s\not \in \left\{w+\frac{2\pi im}{\log(q)}:w\in  \{a(L)\}\cup \mathcal{B}_L, m\in \ZZ\right\},$$
       it  has a pole of order~$b(L)=1$ at~$s=a(L)$ and for every~$w\in \{a(L)\}\cup \mathcal{B}_L$ it satisfies
       $$\lim_{s\to w}(s-w)\zeta_{X,L}(s)= \frac{q^{(d+2-N_X)(1-g)}h_K\R_K(1-N_X,\gamma w-r+N_X-1)}{\zeta_K\left(t  \right)\zeta_K(\gamma w)(\gamma a_r+\xi)(q-1)\log(q)},$$
       where~$N_X:=\#\{j\in \{1,\ldots,r\}:a_j=a_r\}$. 
       
       \item If~$A_L>B_L$, then $\zeta_{U,L}(s)$ is holomorphic for
    $$\Re(s)>\max\left\{A_L-\frac{1}{\gamma},B_L\right\},s\not \in \left\{w+\frac{2\pi im}{\log(q)}:w\in  \{a(L)\}\cup \mathcal{A}_L, m\in \ZZ\right\},$$       
       it has a pole of order~$b(L)=1$ at~$s=a(L)$ and for every~$w\in \{a(L)\}\cup \mathcal{A}_L$ it satisfies
       $$\lim_{s\to w}(s-w)\zeta_{U,L}(s)= \frac{ q^{(r+1)(1-g)}h_K\R_K\left(1-t, ( \gamma a_r+\xi)w- ((r+1)a_r-|\mathbf{a}|) \right)}{\zeta_K\left(\left( \gamma a_r+\xi \right)w-\left( (r+1)a_r-|\mathbf{a}|\right)\right)\zeta_K(r+1)\gamma (q-1)\log(q)}.$$
     \end{enumerate}
	\end{thm}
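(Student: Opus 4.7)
My plan is to adapt Bourqui's strategy from \cite{BOURQUI2002,Bourqui2003VarietiesSplit}, exploiting the explicit equations \eqref{equations HK} to parametrize the rational points of $U = U_d(a_1,\ldots,a_r)$, and then evaluating the resulting divisor sums via Riemann--Roch and the product formula~\eqref{eq product formula}. First, from \eqref{equations HK}, a $K$-point of $U$ is given by a tuple $(\lambda_0,\ldots,\lambda_r; y_1,\ldots,y_t) \in K^{r+t+1}$ with $(y_1,\ldots,y_t) \neq 0$ and $\lambda_r \neq 0$, related to the projective coordinates by $x_0 = \lambda_0$ and $x_{ij} = \lambda_j y_i^{a_j}$, and taken modulo the two natural scaling actions that correspond to the two homogeneous coordinate systems. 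Substituting into \eqref{eq height function HL}, the local factor of $H_L$ at $v \in \Val(K)$ becomes $\sup\{|\lambda_0|_v, |\lambda_1|_v |\mathbf{y}|_v^{a_1}, \ldots, |\lambda_r|_v |\mathbf{y}|_v^{a_r}\}^\gamma \cdot |\mathbf{y}|_v^\xi$, where $|\mathbf{y}|_v := \sup_i |y_i|_v$.

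The next step is to rewrite the height zeta function as a double series over effective divisors. After choosing primitive representatives of both coordinate systems, the natural ``denominator'' divisors of the $y$-tuple and of the (twisted) $\lambda$-tuple become effective divisors $D_y$ and $D_\lambda$, and the height reduces to a quantity of the shape $q^{\gamma \deg(D_\lambda) + \xi \deg(D_y)}$. Using the counting identity $\#\{\lambda \in K : (\lambda)_\infty \leq D\} = q^{\ell(D)}$, together with the torsor factor $h_K$ for each coordinate system and the $(q-1)^{-1}$ factors coming from scaling out the $\FF_q^\times$-actions, one obtains an explicit double series indexed by pairs of effective divisors. Two Möbius inversions appear at this stage: one enforces the primitivity of the $t$-tuple $(y_1,\ldots,y_t)$, producing a factor $\zeta_K(t)^{-1}$, and one enforces the primitivity of the $(r+1)$-tuple $(\lambda_0,\ldots,\lambda_r)$, producing a factor $\zeta_K(r+1)^{-1}$.

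Applying Riemann--Roch to replace $q^{\ell(D)}$ by $q^{\deg(D) + 1 - g}$ for $\deg(D) > 2g-2$ turns the inner sums into geometric series in $q^{-\gamma s}$ and $q^{-(\gamma a_r + \xi)s}$ (shifted appropriately), producing $\zeta_K$-type factors in both numerator and denominator. The remaining small-degree contribution ($\deg D \leq 2g-2$) packages neatly into the function $\R_K$ of \eqref{eq identity Rk}. The output is a manifestly rational function in $q^{-s}$, absolutely convergent for $\Re(s) > a(L)$ by Corollary~\ref{cor big divisors} together with \eqref{eq a(L) and b(L)}. Its denominators vanish exactly on the two arithmetic progressions $A_L + \frac{2\pi i}{\gamma \log(q)}\ZZ$ and $B_L + \frac{2\pi i}{(\gamma a_r+\xi)\log(q)}\ZZ$; the primitivity hypothesis $\gcd(\gamma,\xi) = 1$ forces these to intersect precisely along $a(L) + \frac{2\pi i}{\log(q)}\ZZ$.

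The three cases of the theorem correspond to the relative position of these pole lines. In case~(1), both factors vanish simultaneously along the common progression, yielding a double pole at $a(L)$ with the stated leading constant, while on $\mathcal{A}_L \cup \mathcal{B}_L$ only one factor vanishes (the other being holomorphic and nonzero), so the pole is at most simple and the second-order limit is zero. In cases~(2) and~(3), exactly one factor vanishes along the dominant progression, producing a simple pole whose residue is obtained by evaluating the companion factor on the small-divisor contribution; this is what accounts for the $\R_K$-terms. The integer $N_X := \#\{j : a_j = a_r\}$ in case~(2) records how many of the $\lambda$-variables are ``resonant'' with the maximal twist $a_r$, which is what dictates the first argument $1 - N_X$ of $\R_K$. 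Residues are then computed using $\operatorname{Res}_{s=1}\zeta_K(s) = \frac{h_K q^{1-g}}{(q-1)\log(q)}$. The main obstacle is the combinatorial bookkeeping: tracking precisely how the two Möbius inversions, the factors $h_K$ and $(q-1)$, and the sup-divisor structure of the height combine to produce the stated explicit constants---in particular, verifying that the small-degree contribution assembles into exactly $\R_K(1 - N_X, \gamma w - r + N_X - 1)$ in case~(2) and into $\R_K(1 - t, (\gamma a_r + \xi) w - ((r+1)a_r - |\mathbf{a}|))$ in case~(3).
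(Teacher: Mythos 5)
Your proposal is a valid strategy but takes a genuinely different route from the paper's proof. The paper's key simplification is Lemma~\ref{lem:explicit_U}: since $a_r>0$, the good open subset $U(K)$ is in explicit \emph{bijection} with $\mathbb{A}^d(K)=K^d$ by normalizing $y_t=1$ and $x_{tr}=1$. This eliminates all quotient bookkeeping: there is no need for class-group cosets or $\mathbb{F}_q^\times$-scalings, and consequently no ``torsor factor $h_K$'' is injected at the parametrization stage. In the paper the $h_K^2/(q-1)^2$ in the leading constant arises only at the very end, from $(\operatorname{Res}_{s=1}\zeta_K)^2=\bigl(\tfrac{h_Kq^{1-g}}{(q-1)\log q}\bigr)^2$ at the double pole. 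Your sketch instead follows the universal-torsor style of \cite{Bourqui2003VarietiesSplit}, parametrizing $U(K)$ by tuples $(\lambda_0,\dots,\lambda_r;y_1,\dots,y_t)\in K^{r+t+1}$ modulo two scalings. That is a workable but heavier route: you must choose class-group coset representatives for each of the two coordinate systems, track the $\mathbb{F}_q^\times$ stabilizers, and keep the two scalings compatible (the $y$-scaling must be accompanied by the compensating twist $\lambda_j\mapsto (c')^{-a_j}\lambda_j$). The paper's affine parametrization buys a substantially cleaner Step~1--2, and turns the whole computation into a single $\mu$-couple $(R_L,\tilde R_L)$ (Lemma~\ref{lem:prodOfZT}) plus the inner $\mu$-couple $(N^1_{t-1},\tilde N_{t-1})$.

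A few imprecisions in your sketch worth flagging. The condition singling out $U$ should be $y_t\neq 0$ (together with $\lambda_r\neq 0$), not $(y_1,\dots,y_t)\neq 0$; the latter is automatic and does not cut out $\{x_{tr}\neq 0\}$. The two M\"obius inversions do \emph{not} produce the constant factors $\zeta_K(t)^{-1}$ and $\zeta_K(r+1)^{-1}$; they produce $s$-dependent factors, namely $\zeta_K((\gamma a_r+\xi)s-((r+1)a_r-|\mathbf{a}|))^{-1}$ and $\zeta_K(\gamma s)^{-1}$, which only \emph{evaluate} to $\zeta_K(t)^{-1}$ and $\zeta_K(r+1)^{-1}$ at $s=a(L)$ when $A_L=B_L$. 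This distinction matters for the holomorphicity claim: one must check that these denominators have no zeros in the relevant half-planes, which follows from the location of the zeros of $\zeta_K$. Finally, the reduction of the height to $q^{\gamma\deg(D_\lambda)+\xi\deg(D_y)}$ requires the divisor $D_\lambda$ to carry the entire twisted sup $\sup_j\bigl((\lambda_j)_\infty+a_jD_y,(\lambda_0)_\infty\bigr)$; you flag this with ``(twisted)'' but the bookkeeping here, and the subsequent separation of the double series into the four terms that produce $P_1,P_2,P_3$ and the main $\zeta_K$-product, is exactly where the paper's Steps~2--3 do the real work and would need to be carried out carefully in your framework as well.
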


The proof of Theorem~\ref{thm:ThmGeneralversion} is given in Section~\ref{sec proof of general thm}, after establishing some preliminary lemmas in Section~\ref{sec preliminary lemmas}.

\begin{remark}\label{rmk after main general thm}
As in the case of Hirzebruch--Kleinschmidt varieties over number fields (see~\cite[Remark~6.8]{HMM24a}), the different cases that appear in Theorem~\ref{thm:ThmGeneralversion} give a subdivision of the \emph{big cone} of $X$, i.e.~the interior of~$\Lambda_{\textup{eff}}(X)$ (see Figure~\ref{fig:subdivisionofBigCone} for an illustration, where we assume~$t>|\mathbf{a}|$ for simplicity). Indeed, the line bundles~$L$ contained in the ray passing through the anticanonical class have height zeta functions with a double pole at $s=A_L=B_L$, while line bundles outside this ray have height zeta functions with a simple pole at $s=\max\{A_L,B_L\}$.
\end{remark}

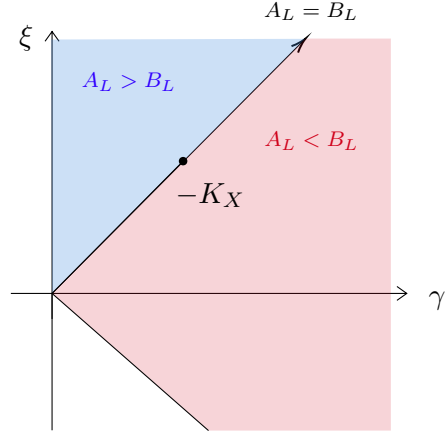
\begin{figure}[h]\label{fig:subdivisionofBigCone}
\centering
\begin{tikzpicture}[x=0.75pt,y=0.75pt,yscale=-1,xscale=1,scale=0.7]

\draw  (181.17,221.1) -- (463.5,221.1)(210.56,32.92) -- (210.56,239.45) (456.5,216.1) -- (463.5,221.1) -- (456.5,226.1) (205.56,39.92) -- (210.56,32.92) -- (215.56,39.92)  ;
\draw    (210.56,221.1) -- (389.93,39.37) ;
\draw [shift={(391.33,37.95)}, rotate = 134.63] [color={rgb, 255:red, 0; green, 0; blue, 0 }  ][line width=0.75]    (10.93,-3.29) .. controls (6.95,-1.4) and (3.31,-0.3) .. (0,0) .. controls (3.31,0.3) and (6.95,1.4) .. (10.93,3.29)   ;

\draw  [draw opacity=0][fill={rgb, 255:red, 208; green, 2; blue, 27 }  ,fill opacity=0.17 ] (391.33,37.95) -- (210.56,221.1) -- (322.13,319.67) -- (452.17,319.67) -- (452.17,37.95) -- cycle ;

\draw  [color={rgb, 255:red, 0; green, 0; blue, 0 }  ,draw opacity=0 ][fill={rgb, 255:red, 74; green, 144; blue, 226 }  ,fill opacity=0.28 ] (210.55,221.09) -- (390.91,38.15) -- (209.88,38.63) -- cycle ;


\draw    (304,126.05) -- (210.56,221.1) ;
\draw [shift={(304,126.05)}, rotate = 134.51] [color={rgb, 255:red, 0; green, 0; blue, 0 }  ][fill={rgb, 255:red, 0; green, 0; blue, 0 }  ][line width=0.75]      (0, 0) circle [x radius= 2.34, y radius= 2.34]   ;
\draw    (210.56,221.1) -- (210.56,319.44) ;


\draw    (210.56,221.1) -- (322.13,319.67) ;

\draw (185.41,26.09) node [anchor=north west][inner sep=0.75pt]    {$\xi $};
\draw (477.49,218.08) node [anchor=north west][inner sep=0.75pt]    {$\gamma $};
\draw (296.95,139.97) node [anchor=north west][inner sep=0.75pt]  [color={rgb, 255:red, 0; green, 0; blue, 0 }  ,opacity=1 ]  {$-K_{X}$};
\draw (360,10) node [anchor=north west][inner sep=0.75pt]  [font=\scriptsize]  {$A_L=B_L$};
\draw (360.33,103.03) node [anchor=north west][inner sep=0.75pt]  [font=\scriptsize,color={rgb, 255:red, 208; green, 2; blue, 27 }  ,opacity=1 ]  {$A_L<B_L$};
\draw (230,60.65) node [anchor=north west][inner sep=0.75pt]  [font=\scriptsize,color={rgb, 255:red, 49; green, 0; blue, 255 }  ,opacity=1 ]  {$A_L>B_L$};
\end{tikzpicture}
 \caption{Subdivision of the big cone of $X_d(a_1,\ldots,a_r)$ according to the values of~$A_L=\frac{r+1}{\gamma}$ and~$B_L=\frac{(r+1)a_r-|\mathbf{a}|+t}{\gamma a_r+\xi}$ (assuming~$t>|\mathbf{a}|$ for simplicity).}
 \end{figure}

The much simpler case when~$a_r=0$ is given by the following theorem, where we see that there is no need to remove a closed subvariety of~$X=X_d(a_1,\ldots,a_r)$ and we can directly give the analytic properties of the height zeta function
$$\zeta_{X,L}(s):=\sum_{P\in X(K)}H_L(P)^{-s}.$$
Note that, in this case, we have
$$A_L=\frac{r+1}{\gamma}, \quad B_L=\frac{t}{\xi}.$$

 \begin{thm}\label{thm:ThmGeneral when ar=0}
  Let~$X\simeq \PP^r\times \PP^{t-1}$ be a Hirzebruch--Kleinschmidt variety over the global function field~$K=\F_q(\mathcal{C})$ with~$a_r=0$ and let~$L=\gamma h+\xi f\in \Pic(X)$ big and primitive. Then, $\zeta_{X,L}(s)$ converges absolutely for~$\Re(s)>a(L)$ and is a rational function in~$q^{-s}$. Moreover, the following properties hold:
  \begin{enumerate}
      \item If~$A_L=B_L$, then $\zeta_{X,L}(s)$ is holomorphic for
    $$\Re(s)>\max\left\{\frac{1}{2\gamma},\frac{1}{2\xi}\right\},s\not \in \left\{w+\frac{2\pi im}{\log(q)}:w\in  \{a(L)\}\cup \mathcal{A}_L\cup\mathcal{B}_L, m\in \ZZ\right\},$$
    it  has a pole of order~$b(L)=2$ at~$s=a(L)$ and satisfies
    $$\lim_{s\to w}(s-w)^{2}\zeta_{X,L}(s)=\left\{
    \begin{array}{ll}
      \frac{q^{(d+2)(1-g)} h_K^2}{\zeta_K(t)\zeta_K(r+1)\xi\gamma (q-1)^2\log(q)^2}   & \text{if }w=a(L),  \\
        0 & \text{if }w\in  \mathcal{A}_L\cup \mathcal{B}_L.
    \end{array}\right.$$
       \item If~$A_L<B_L$, then $\zeta_{X,L}(s)$ is holomorphic for
    $$\Re(s)>\max\left\{A_L,\frac{1}{2\xi}\right\},s\not \in \left\{w+\frac{2\pi im}{\log(q)}:w\in  \{a(L)\}\cup \mathcal{B}_L, m\in \ZZ\right\},$$
       it  has a pole of order~$b(L)=1$ at~$s=a(L)$ and for every~$w\in \{a(L)\}\cup \mathcal{B}_L$ it satisfies
       $$\lim_{s\to w}(s-w)\zeta_{X,L}(s)=\frac{\zeta_{\PP^r}(\gamma w)  h_Kq^{t(1-g)}}{ \zeta_K(t)\xi(q-1)\log(q)}.$$
       \item If~$A_L>B_L$, then $\zeta_{X,L}(s)$ is holomorphic for
    $$\Re(s)>\max\left\{\frac{1}{2\gamma},B_L\right\},s\not \in \left\{w+\frac{2\pi im}{\log(q)}:w\in  \{a(L)\}\cup \mathcal{A}_L, m\in \ZZ\right\},$$       
       it has a pole of order~$b(L)=1$ at~$s=a(L)$ and for every~$w\in \{a(L)\}\cup \mathcal{A}_L$ it satisfies
       $$\lim_{s\to w}(s-w)\zeta_{X,L}(s)=\frac{\zeta_{\PP^{t-1}}(\xi w)  h_Kq^{(r+1)(1-g)}}{\gamma\zeta_K(r+1)(q-1)\log(q)}.$$
  \end{enumerate}
 \end{thm}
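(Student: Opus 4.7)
The case $a_r=0$ is special because $\mathcal{E}=\CO_{\PP^{t-1}}^{\oplus(r+1)}$ is trivial, so the projective bundle splits as a product $X\simeq\PP^r\times\PP^{t-1}$. Under this identification, the classes $h$ and $f$ pull back from the two factors, so $L=\gamma h+\xi f$ corresponds to $\mathcal{O}_{\PP^r}(\gamma)\boxtimes\mathcal{O}_{\PP^{t-1}}(\xi)$. The plan is to reduce everything to Theorem~\ref{thm height zeta function projective} via a factorization.

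First, I would check (directly from the definition~\eqref{eq height function HL}, noting that the homogeneous coordinates on $X$ split into an $x$-block controlled by $\gamma$ and a $y$-block controlled by $\xi$) that for any rational point $(P_1,P_2)\in\PP^r(K)\times\PP^{t-1}(K)$ one has
\[
H_L(P_1,P_2)=H_{\PP^r}(P_1)^{\gamma}H_{\PP^{t-1}}(P_2)^{\xi},
\]
where $H_{\PP^n}$ denotes the standard height of Section~\ref{sec height zeta function of projective spaces}. Summing over $X(K)=\PP^r(K)\times\PP^{t-1}(K)$ and separating variables immediately yields the key factorization
\[
\zeta_{X,L}(s)=\zeta_{\PP^r}(\gamma s)\,\zeta_{\PP^{t-1}}(\xi s).
\]
Absolute convergence for $\Re(s)>a(L)=\max\{A_L,B_L\}$ and rationality in $q^{-s}$ then follow from Theorem~\ref{thm height zeta function projective}.

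Next I would analyze the poles. By Theorem~\ref{thm height zeta function projective}, the factor $\zeta_{\PP^r}(\gamma s)$ is meromorphic for $\Re(s)>\frac{1}{2\gamma}$ with simple poles precisely at $s=A_L+\frac{2\pi ik}{\gamma\log q}$, $k\in\ZZ$, and the analogue holds for $\zeta_{\PP^{t-1}}(\xi s)$ with $A_L,\gamma$ replaced by $B_L,\xi$. Using the primitivity hypothesis $\gcd(\gamma,\xi)=1$, an elementary congruence argument shows that whenever $A_L=B_L$ the two lattices of poles intersect only at points of the form $s=A_L+\frac{2\pi i m}{\log q}$, which produce the double poles of $\zeta_{X,L}$; all other poles in $\mathcal{A}_L\cup\mathcal{B}_L$ (mod $\frac{2\pi i}{\log q}$) are simple because only one factor has a pole there, which also explains why the limit in the statement of~(1) vanishes on $\mathcal{A}_L\cup\mathcal{B}_L$. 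When $A_L\neq B_L$ the two sets of poles are disjoint, so all poles are simple; in the region $\Re(s)>\max\{A_L,\frac{1}{2\xi}\}$ (respectively $\Re(s)>\max\{\frac{1}{2\gamma},B_L\}$) the poles of the factor $\zeta_{\PP^r}(\gamma s)$ (respectively $\zeta_{\PP^{t-1}}(\xi s)$) are automatically avoided, which gives the holomorphy regions stated in~(2) and~(3).

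Finally, the residues are obtained from
\[
\operatorname{Res}_{s=A_L+\frac{2\pi ik}{\gamma\log q}}\zeta_{\PP^r}(\gamma s)=\frac{1}{\gamma}\cdot\frac{h_Kq^{(r+1)(1-g)}}{\zeta_K(r+1)(q-1)\log(q)}
\]
and the analogous formula for $\zeta_{\PP^{t-1}}(\xi s)$, both supplied by Theorem~\ref{thm height zeta function projective}. For case~(1), multiplying the two residues gives the double-pole constant $\frac{q^{(d+2)(1-g)}h_K^2}{\zeta_K(t)\zeta_K(r+1)\gamma\xi(q-1)^2\log(q)^2}$ after using $d+2=r+1+t$. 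For cases~(2) and~(3), only one factor contributes a pole at $w$, while the other is evaluated at $\gamma w$ or $\xi w$ respectively (which is safely inside the holomorphy region because $A_L\neq B_L$), producing the stated formulas. There is no real obstacle here — the whole argument is a clean reduction to the known case of projective space, with the only mildly delicate point being the bookkeeping of coincident poles under the primitivity hypothesis.
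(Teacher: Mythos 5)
Your proposal is correct and follows essentially the same route as the paper: factor $\zeta_{X,L}(s)=\zeta_{\PP^r}(\gamma s)\,\zeta_{\PP^{t-1}}(\xi s)$ from~\eqref{eq height function HL} and~\eqref{eq standard height projective}, then apply Theorem~\ref{thm height zeta function projective} to each factor, with the primitivity hypothesis ensuring $\mathcal{A}_L\cap\mathcal{B}_L=\emptyset$ so that the limit in part~(1) vanishes off the ray through $a(L)$. You spell out the congruence/lattice bookkeeping a bit more explicitly than the paper does, but the argument and the resulting residue computations are identical.
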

\begin{proof}
    By~\eqref{eq height function HL} and~$\eqref{eq standard height projective}$ we have
    $$\zeta_{X,L}(s)=\zeta_{\PP^r}(\gamma s)\zeta_{\PP^{t-1}}(\xi s).$$
    Then, using Theorem~\ref{thm height zeta function projective}, we see that~$\zeta_{X,L}(s)$ is a rational function in~$q^{-s}$ and it converges absolutely for~$\Re(s)>\max\{A_L,B_L\}=a(L)$. Moreover, we have the following properties:
    \begin{enumerate}
        \item If~$A_L=B_L$, then~$\zeta_{X,L}(s)$ 
        has a double pole at~$s=a(L)$ with
        \begin{equation*}
            \begin{split}
        \lim_{s\to a(L)}(s-a(L))^2\zeta_{X,L}(s)&=\frac{\operatorname{Res}_{s=r+1}\zeta_{\PP^r}(s) \operatorname{Res}_{s=t}\zeta_{\PP^{t-1}}(s)}{\gamma \xi}\\
        &=\frac{h_K^2q^{(d+2)(1-g)}}{\gamma \xi \zeta_K(r+1)\zeta_K(t)(q-1)^2\log(q)^2}.
            \end{split}
        \end{equation*}
         Moreover,  $\zeta_{U,L}(s)$ is holomorphic for
    $$\Re(s)>\max\left\{\frac{1}{2\gamma},\frac{1}{2\xi}\right\},s\not \in \left\{w+\frac{2\pi im}{\log(q)}:w\in  \{a(L)\}\cup \mathcal{A}_L\cup\mathcal{B}_L, m\in \ZZ\right\},$$
    and satisfies~$\lim_{s\to w}(s-w)^2\zeta_{U,L}(s)=0$ for every~$w\in \mathcal{A}_L\cup\mathcal{B}_L$, since~$\mathcal{A}_L\cap\mathcal{B}_L=\emptyset$.
        \item If~$A_L<B_L$, then $\zeta_{X,L}(s)$ is holomorphic for
    $$\Re(s)>\max\left\{A_L,\frac{1}{2\xi}\right\},s\not \in \left\{w+\frac{2\pi im}{\log(q)}:w\in  \{a(L)\}\cup \mathcal{B}_L, m\in \ZZ\right\},$$
and for every~$w\in \{a(L)\}\cup \mathcal{B}_L$ it satisfies
        \begin{equation*}
            \begin{split}
        \lim_{s\to w}(s-w)\zeta_{X,L}(s)&=\frac{\zeta_{\PP^r}(\gamma w) \operatorname{Res}_{s=t}\zeta_{\PP^{t-1}}(s)}{\xi}\\
        &=\frac{\zeta_{\PP^r}(\gamma w)  h_Kq^{t(1-g)}}{\xi \zeta_K(t)(q-1)\log(q)}.
            \end{split}
        \end{equation*}
        In particular, since~$\zeta_{\PP^r}(\gamma B_L)$ is positive, $\zeta_{X,L}(s)$ has a pole of order~$b(L)=1$ at~$s=a(L)$.
        \item If~$A_L>B_L$, then $\zeta_{X,L}(s)$ is holomorphic for
    $$\Re(s)>\max\left\{\frac{1}{2\gamma},B_L\right\},s\not \in \left\{w+\frac{2\pi im}{\log(q)}:w\in  \{a(L)\}\cup \mathcal{A}_L, m\in \ZZ\right\},$$
and for every~$w\in \{a(L)\}\cup \mathcal{A}_L$ it satisfies
        \begin{equation*}
            \begin{split}
        \lim_{s\to w}(s-w)\zeta_{X,L}(s)&=\frac{\operatorname{Res}_{s=r+1}\zeta_{\PP^{r}}(s) \zeta_{\PP^{t-1}}(\xi w)}{\gamma}\\
        &=\frac{\zeta_{\PP^{t-1}}(\xi w)  h_Kq^{(r+1)(1-g)}}{\gamma \zeta_K(r+1)(q-1)\log(q)}.
            \end{split}
        \end{equation*}
        As in the previous case, using that~$\zeta_{\PP^{t-1}}(\xi A_L)$ is positive, we conclude that~$\zeta_{X,L}(s)$ has a pole of order~$b(L)=1$ at~$s=a(L)$.
    \end{enumerate}
    This proves the theorem.
\end{proof}

\subsection{Preliminary lemmas}\label{sec preliminary lemmas}

In this section we establish some preliminary lemmas needed for the proof of Theorem~\ref{thm:ThmGeneralversion} in the next section.
 
\begin{defi}\label{def convolution}
    Given functions $f,g: \Div^+(K)\rightarrow \CC$, we define their convolution product as
	$$(f\star g)(D):=\sum_{0\leq D'\leq D}f(D')g(D-D').$$
	  Moreover, we define the functions 	$1, u, \mu:\Div^+(K)\rightarrow \CC$ by 
	\begin{itemize}\setlength\itemsep{2mm}

  \item $1(D):=1$ for every $D\in \Div^+(K)$.
		\item $u(D):=\left\lbrace \begin{array}{ll}
			1& \textup{if } D=0,\\
			0	&  \textup{otherwise}.
		\end{array}\right. $
		\item $\mu(D):=\left\lbrace \begin{array}{ll}
			(-1)^{\sum_v v(D)}	& \textup{if~$v(D)=0$ or $1$ for all $v\in \operatorname{Val}(K)$},  \\
			0	& \textup{otherwise}.
		\end{array}\right.$
	\end{itemize}
 	In particular, $u$ is a unit for the convolution product and $1\star \mu =\mu\star 1=u$.
\end{defi}

The following result, based on a straightforward computation, is a M\"{o}bius inversion formula in this context.

	\begin{lem}
		\label{lem:muCouple}
		Let $f,g:\Div^+(K)\to \CC$ be two functions. Then, the relation
		$$f(D)=\sum_{0\leq D'\leq D}g(D') \quad \text{for all }D\in \Div^+(K)$$
		is equivalent to 
		$$g(D)=\sum_{0\leq D'\leq D}\mu(D-D')f(D')\quad \text{for all }D\in \Div^+(K).$$
	\end{lem}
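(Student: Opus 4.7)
The natural approach is to reinterpret both identities in the statement as convolution equations and then use the fact, already stated at the end of Definition~\ref{def convolution}, that $1\star\mu = \mu\star 1 = u$. First I would observe that the operation $\star$ on functions $\Div^+(K)\to\CC$ is commutative and associative. Commutativity is immediate from the change of variables $D'\mapsto D-D'$ in the defining sum. Associativity follows from expanding both $(f\star g)\star h$ and $f\star(g\star h)$ and checking that each equals
$$\sum_{\substack{D_1,D_2,D_3\geq 0 \\ D_1+D_2+D_3 = D}} f(D_1)g(D_2)h(D_3),$$
which is a routine unwinding of the definition of $\star$.

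Next, I would rewrite the two hypotheses of the lemma in convolution form. Since $1(D-D')=1$ for every $0\leq D'\leq D$, the first identity reads $f = 1\star g$, and by definition of $\star$ the second identity reads $g = \mu\star f$ (using commutativity of $\star$).

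Assuming $f = 1\star g$, convolving on the left by $\mu$ and using associativity gives $\mu\star f = \mu\star(1\star g) = (\mu\star 1)\star g = u\star g = g$, which is the second identity. Conversely, assuming $g = \mu\star f$, convolving on the left by $1$ yields $1\star g = 1\star(\mu\star f) = (1\star\mu)\star f = u\star f = f$, recovering the first identity. Since each direction follows from a one-line application of associativity together with $1\star\mu = \mu\star 1 = u$, no step presents a real obstacle; the only preliminary verification needed is associativity of $\star$, which is the mildly tedious but routine computation sketched above.
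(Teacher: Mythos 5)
Your proof is correct. Since the paper does not actually spell out a proof (it labels the lemma ``based on a straightforward computation''), there is nothing to compare against; your convolution-algebra route, reducing both identities to $f = 1\star g$ and $g = \mu\star f$ and then applying associativity of $\star$ together with the identity $1\star\mu = \mu\star 1 = u$ already recorded in Definition~\ref{def convolution}, is the natural and standard way to establish this form of M\"obius inversion, and the supporting verifications of commutativity and associativity you sketch are exactly as routine as you claim.
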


	Lemma~\ref{lem:muCouple} motivates the following definition that will be useful for counting purposes.
 
	\begin{defi}
		Let $f,g:\Div^+(K)\to \CC$ be functions. We say that the pair $(f,g)$ forms a \emph{$\mu$-couple} if they satisfy any of the two equivalent relations in Lemma \ref{lem:muCouple}.
	\end{defi}
	 By definition of the function $u$, the pair $(1, u)$ forms a $\mu$-couple. 
  
	\begin{lem}\label{lem inversion Z_K(T)}
		We have the following equality of formal series:
		$$\sum_{D\geq 0}\mu(D)T^{\deg (D)}=\frac{1}{\Zet_K(T)}.$$
	\end{lem}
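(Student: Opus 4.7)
My plan is to exploit the fact that the convolution product $\star$ defined on functions $\Div^+(K) \to \CC$ corresponds to the ordinary product of formal power series in $T$ under the generating-series map $f \mapsto \sum_{D \geq 0} f(D) T^{\deg(D)}$. Concretely, for any $f, g : \Div^+(K) \to \CC$, I would verify the identity
\begin{align*}
\Big(\sum_{D \geq 0} f(D) T^{\deg(D)}\Big) \cdot \Big(\sum_{D \geq 0} g(D) T^{\deg(D)}\Big) = \sum_{D \geq 0} (f \star g)(D) T^{\deg(D)}
\end{align*}
by re-indexing the product on the left via $D = D_1 + D_2$ with $0 \leq D_1 \leq D$, and using additivity of $\deg$ on sums of effective divisors. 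This manipulation is purely formal; it is well-defined because only finitely many effective divisors have any given degree on $\mathcal{C}$, so each coefficient of $T^n$ in these series is a finite sum.

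Once this correspondence is in place, the lemma reduces immediately to the identity $1 \star \mu = u$ already recorded just after Definition~\ref{def convolution}. Indeed, taking $f = 1$ and $g = \mu$ in the identity above and recalling that $\Zet_K(T) = \sum_{D \geq 0} T^{\deg(D)}$ gives
\begin{align*}
\Zet_K(T) \cdot \sum_{D \geq 0} \mu(D) T^{\deg(D)} = \sum_{D \geq 0} u(D) T^{\deg(D)} = 1,
\end{align*}
since $u(D) = 0$ for $D \neq 0$ and $u(0) = 1$. Rearranging yields the claimed equality $\sum_{D \geq 0} \mu(D) T^{\deg(D)} = 1/\Zet_K(T)$.

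There is essentially no technical obstacle: the whole argument is a two-line bookkeeping computation once the convolution-product-to-Cauchy-product correspondence is noted. If desired, one could alternatively give an Euler-product style proof, writing both sides as products indexed by $v \in \Val(K)$ using that $\mu(\sum_v n_v v)$ vanishes unless all $n_v \in \{0,1\}$ in which case it factors as $\prod_v \mu(n_v v)$, and then checking the identity place-by-place against $\Zet_K(T)^{-1} = \prod_v (1 - T^{f_v})$; but the convolution route is more transparent and fits more naturally with the formalism of $\mu$-couples introduced just above.
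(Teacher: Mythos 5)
Your proof is correct and follows essentially the same route as the paper: multiply $\Zet_K(T)$ by $\sum_{D\geq 0}\mu(D)T^{\deg(D)}$, re-index via $D=D_1+D_2$ to recognize the convolution, and invoke $1\star\mu=u$. The only difference is that you first record the general convolution-to-Cauchy-product correspondence before specializing, whereas the paper carries out the re-indexing directly for the pair $(1,\mu)$; the substance is identical.
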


\begin{proof}
		Since $(1,u)$ is a $\mu$-couple, we have that
		\begin{equation*}
			\begin{split}
				\sum_{D'\geq 0}\mu(D')T^{\deg(D')}\Zet_K(T)&=\sum_{D'\geq 0}\mu(D')T^{\deg(D')}\sum_{D\geq 0}T^{\deg(D)}\\
				&=\sum_{D\geq 0}\sum_{D'\geq 0}\mu(D')T^{\deg(D+D')}\\
				&=\sum_{D\geq0} \sum_{0\leq  D'\leq D} \mu(D-D')T^{\deg(D)}\\
				&=\sum_{D\geq 0} u(D)T^{\deg(D)}=1,
			\end{split}
		\end{equation*}
  where the last equality follows from the definition of the function $u$. This proves the lemma.
	\end{proof}

Putting~$T=q^{-s}$ in the above lemma, we get the following corollary.

 \begin{cor}\label{cor inverse of zetaK converges absolutely}
     For every~$s\in \CC$ with~$\Re(s)>1$, we have
     $$\sum_{D\geq 0}\mu(D)q^{-s\deg(D)}=\frac{1}{\zeta_K(s)},$$
     and the series on the left hand side converges absolutely and uniformly on compact subsets of the half-plane~$\Re(s)>1$.
 \end{cor}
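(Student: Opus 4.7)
The plan is to obtain the corollary by substituting $T = q^{-s}$ into the formal identity of the preceding lemma, and to justify that the substitution is legitimate by establishing absolute and uniform convergence of the left-hand series on the half-plane $\Re(s) > 1$.

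The main (in fact only) technical step is the convergence claim. From the definition of $\mu$ we have the trivial pointwise bound $|\mu(D)| \leq 1$ for every $D \in \Div^+(K)$, so
$$|\mu(D) q^{-s \deg(D)}| \leq q^{-\Re(s) \deg(D)}.$$
Summing over all effective divisors, the dominating series is exactly
$$\sum_{D \geq 0} q^{-\Re(s) \deg(D)} = \Zet_K(q^{-\Re(s)}) = \zeta_K(\Re(s)),$$
which is finite for $\Re(s) > 1$ by the properties of $\zeta_K$ recalled in Section~\ref{subsection arithmetic tools}. This gives absolute convergence pointwise on the open half-plane. For uniform convergence on compact subsets, I would note that any compact subset of $\{\Re(s) > 1\}$ lies in some closed half-plane $\{\Re(s) \geq 1 + \varepsilon\}$ with $\varepsilon > 0$, on which the uniform majorant $q^{-(1+\varepsilon)\deg(D)}$ has convergent sum $\zeta_K(1+\varepsilon)$; the Weierstrass $M$-test then yields uniform convergence.

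Having established absolute convergence of both series at $T = q^{-s}$ (the series $\Zet_K(T)$ converges absolutely in the same region, so $\Zet_K(q^{-s})^{-1} = \zeta_K(s)^{-1}$ is well defined and nonzero there), the formal identity of Lemma~\ref{lem inversion Z_K(T)} specializes to the desired analytic identity upon setting $T = q^{-s}$. I do not anticipate any real obstacle; this is essentially a direct specialization argument, with the only subtlety being the routine verification that the series converges well enough to make the substitution meaningful.
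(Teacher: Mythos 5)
Your proof is correct and follows exactly the same route as the paper, which simply remarks that the corollary follows by putting $T = q^{-s}$ in Lemma~\ref{lem inversion Z_K(T)}. You have supplied the routine convergence verification (majorizing by $\zeta_K(\Re(s))$ via $|\mu(D)| \leq 1$ and the Weierstrass $M$-test on half-planes $\Re(s) \geq 1+\varepsilon$) that the paper leaves implicit, so the content matches.
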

 
 For computations involving supremum of divisors, we will make use of the following result.
 	\begin{lem}
		\label{lem:sepDivisor}
		For all~$x,y\in K$ we have 
		$$\sup\{(x)_\infty, (xy)_\infty, (y)_\infty\}=(x)_\infty+(y)_\infty.$$
	\end{lem}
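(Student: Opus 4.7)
The plan is to verify the claimed equality of divisors valuation by valuation, reducing everything to an elementary inequality about real numbers. Recall that by definition $(x)_\infty = -\sum_{v : v(x) < 0} v(x) \cdot v$, so for every $v \in \Val(K)$ we have $v((x)_\infty) = \max\{-v(x), 0\}$. Writing $a = v(x)$, $b = v(y)$ and using the valuation identity $v(xy) = a + b$, the claim localizes at $v$ to the numerical statement
\[
\max\bigl\{\max(-a,0),\, \max(-(a+b),0),\, \max(-b,0)\bigr\} \;=\; \max(-a,0)+\max(-b,0).
\]

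Next I would establish the inequality ``$\leq$'' in a uniform way: writing $c^- := \max(-c,0)$, the quantities $a^-$ and $b^-$ are each bounded above by $a^- + b^-$, and the subadditivity estimate $(a+b)^- \leq a^- + b^-$ (equivalent to the triangle-type inequality $-(a+b) \leq (-a) + (-b)$ combined with the obvious bound $0 \leq a^- + b^-$) handles the middle term.

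For the reverse inequality ``$\geq$'', I would perform a quick case analysis on the signs of $a$ and $b$. If both $a,b \geq 0$, both sides vanish. If $a < 0 \leq b$, then $a^- = -a > 0$ and $b^- = 0$, so the right-hand side equals $-a$, which already appears on the left. The case $b < 0 \leq a$ is symmetric. Finally, if $a, b < 0$, then $-(a+b) = a^- + b^- > 0$, so $(a+b)^-$ itself realizes the value on the right-hand side, and the maximum on the left is at least this value.

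Combining both inequalities yields equality at each $v$, hence the equality of divisors. The only real content is the case analysis above, and no step is genuinely hard; the main thing to be careful about is matching the sign conventions in the definition of $(x)_\infty$ so that the reduction to the inequality $\max\{a^-, (a+b)^-, b^-\} = a^- + b^-$ is set up correctly.
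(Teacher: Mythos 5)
Your proof is correct and follows essentially the same route as the paper: both arguments localize at each valuation $v$ and then do a case analysis on the signs of $v(x)$ and $v(y)$. The only cosmetic difference is that you split the verification into a uniform $\leq$ (via subadditivity of $c\mapsto\max(-c,0)$) followed by a $\geq$ case check, whereas the paper handles the four sign cases directly and reads off the equality in each; the content is the same.
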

	\begin{proof}
		Let $v\in \Val(K)$. We recall that for a divisor $D=\sum_{v\in \Val(K)}n_vv\in \Div(K)$, we defined $v(D)=n_v$. We have to show that
  $$\sup\{v((x)_\infty), v((xy)_\infty), v((y)_\infty)\}=v((x)_\infty)+v((y)_\infty),$$
  for all~$v\in \Val(K)$.   We have the following four cases to consider:
		\begin{enumerate}
			\item If $v(x),v(y)\leq 0$ then~$v\left((xy)_\infty \right)=v\left((x)_\infty \right)  + v\left((y)_\infty \right)$.
			\item If $v(x) \leq 0$ and $v(y)>0$ then $v\left((xy)_\infty \right)\leq v\left((x)_\infty \right)$ and $v\left( (y)_\infty\right)=0$.
			\item  If $v(x) > 0$ and $v(y)\leq 0$ then $v\left((xy)_\infty \right)\leq v\left((y)_\infty \right)$ and $v\left( (x)_\infty\right)=0$.
			\item If $v(x), v(y)>0$ then $v\left((xy)_\infty \right)=v\left((x)_\infty \right)=v\left((y)_\infty \right)=0$.
		\end{enumerate}
 The desired results follows directly from these computations. This proves the lemma.
	\end{proof}


\begin{lem}\label{lem:explicit_U}
Assume~$a_r>0$. Then, the good open subset~$U_d(a_1,\ldots,a_r)$ equals the image of the map
\begin{equation}\label{eq lem explicit U}
    (x_1,\ldots,x_d)\in \A^d \mapsto ([x_d:(\tilde{x}_{ij})_{i\in I_t,j\in I_r}],[x_1:\ldots:x_{t-1}:1])\in X_d(a_1,\ldots,a_r),
\end{equation}
where
\begin{equation*}
   \tilde{x}_{ij}=\left\{
\begin{array}{ll}
x_{t+j-1}x_i^{a_j}     & \text{if }i<t,j<r,  \\
x_{t+j-1}     & \text{if }i=t,j<r, \\
x_i^{a_r}     & \text{if }i<t,j=r,  \\
1    & \text{if }i=t,j=r.
\end{array}
\right.
\end{equation*}
Moreover, the map~\eqref{eq lem explicit U} is injective.
\end{lem}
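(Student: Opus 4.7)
The plan is to establish both inclusions between $U_d(a_1,\ldots,a_r)$ and the image of the map \eqref{eq lem explicit U}, and the injectivity, by exploiting the uniqueness of a canonical projective representative. The whole argument reduces essentially to one nontrivial step (forcing $y_t\neq 0$), the rest being case-by-case substitutions in the defining equations.

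First I would verify directly that the image lies in $X_d(a_1,\ldots,a_r)$, i.e.\ that with $y_i=x_i$ for $i<t$ and $y_t=1$, the proposed $\tilde{x}_{ij}$ satisfy
\[
\tilde{x}_{mj}\, y_n^{a_j} = \tilde{x}_{nj}\, y_m^{a_j}
\quad\text{for all } j\in I_r,\; m\neq n \text{ in } I_t.
\]
This is a short case analysis according to whether $m=t$, $n=t$, and $j=r$; in every case the formula for $\tilde{x}_{ij}$ produces the identity immediately. Since $\tilde{x}_{tr}=1$, the image is contained in $U_d(a_1,\ldots,a_r)$.

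For surjectivity I would pick $P=([x_0:(x_{ij})_{i\in I_t,j\in I_r}],[y_1:\ldots:y_t])\in U_d(a_1,\ldots,a_r)$. Since $x_{tr}\neq 0$, rescale the first projective coordinate so that $x_{tr}=1$. The key step, and the only point where the hypothesis $a_r>0$ enters, is to show $y_t\neq 0$: applying the defining equation with $j=r$, $n=t$, $m=i<t$ gives $y_i^{a_r}=x_{ir}\, y_t^{a_r}$, so if $y_t=0$ then $y_i^{a_r}=0$ for all $i<t$, and since $a_r>0$ this forces $y_i=0$ for every $i<t$, contradicting $[y_1:\ldots:y_t]\in\PP^{t-1}$. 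Having ruled this out, I rescale so that $y_t=1$, and set $x_i:=y_i$ for $i<t$, $x_{t+j-1}:=x_{tj}$ for $j<r$, and $x_d:=x_0$. Using the defining equations with $n=t$ (namely $x_{mj}=x_{tj}\, y_m^{a_j}$), the same case analysis as in the first step shows that the $\tilde{x}_{ij}$ computed from $(x_1,\ldots,x_d)$ recover the original $x_{ij}$ on the nose, so this tuple maps to $P$.

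Injectivity then follows immediately: any preimage of a given point of $U_d(a_1,\ldots,a_r)$ must use the canonical normalization $\tilde{x}_{tr}=1$ (coming from the formula) and the normalization of the second projective factor given by the last coordinate equal to $1$; these pin down the projective representatives uniquely, and the coordinates $x_1,\ldots,x_d$ are then read off directly from the corresponding entries of $P$. The main obstacle is the brief but essential argument that $y_t\neq 0$, since this is precisely what distinguishes the case $a_r>0$ treated here from the split case $a_r=0$ handled separately via Theorem~\ref{thm:ThmGeneral when ar=0}.
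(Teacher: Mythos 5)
Your proof is correct and follows essentially the same route as the paper's: the decisive step is showing $y_t\neq 0$ via the defining equations with $j=r$ and $a_r>0$, after which normalizing $x_{tr}=y_t=1$ lets you read off $(x_1,\ldots,x_d)$ and verify $\tilde{x}_{ij}=x_{ij}$ by a short case analysis. The only (harmless) addition is your explicit verification that the image lands in $X_d(a_1,\ldots,a_r)$, which the paper leaves implicit.
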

\begin{proof}
Let~$P:=([x_0: (x_{ij})_{i\in I_t,j\in I_r}],[y_1:\ldots:y_t])$ be a point in~$U_d(a_1,\ldots,a_r)$, and note that the condition $x_{tr}\neq 0$ implies $y_t\neq 0$. Indeed, due to the equations 
	$$x_{tr}y_n^{a_r}=x_{nr}y_t^{a_r} \quad  \text{for }n=1,\ldots, t-1,$$
    the vanishing of $y_t$ would imply that all the coordinates of~$[y_1:\ldots:y_t]\in \mathbb{P}^{t-1}$ would be zero, which is absurd. Hence, we can assume~$y_t=x_{tr}=1$. Putting~$x_i:=y_i$ for~$1\leq i<t$, $x_{t+j-1}:=x_{tj}$ for~$1\leq j<r$, and~$x_d:=x_0$, we obtain a point~$(x_1,\ldots,x_d)\in \A^d$ which defines coefficients~$\tilde{x}_{ij}$ as above. By construction, we have~$\tilde{x}_{ij}=x_{ij}$ for~$i=t$ and for all~$j\in I_r$. Now, it follows from the equations~\eqref{equations HK} with~$n=t$ and~$j<r$ that~$x_{ij}=x_{tj}y_i^{a_j}=x_{t+j-1}x_i^{a_j}=\tilde{x}_{ij}$ for all~$i$ with~$1\leq i<t$. Similarly, using the equations~\eqref{equations HK} with~$n=t$ and~$j=r$, we get~$x_{ir}=x_{tr}y_i^{a_r}=x_i^{a_r}=\tilde{x}_{ir}$ for all~$i$ with~$1\leq i<t$. This shows that the image of~$(x_1,\ldots,x_d)$ under~\eqref{eq lem explicit U} equals~$P$, and proves the first statement of the lemma. Finally, the injectivity of the map~\eqref{eq lem explicit U} follows from the fact that one can recover~$(x_1,\ldots,x_d)$ from the coefficients~$x_0$, $x_{tj}$ with~$1\leq j<r$, and~$y_k$ for~$1\leq k<t$, of~$P$. This completes the proof of the lemma.
\end{proof}

 

We now introduce the formal power series
\begin{equation}\label{eq def Z(T)}
\Zet_{U,L}(T):=\sum_{x\in \mathbb{A}^d} T^{\deg(\mathbf{d}_L(x))},    
\end{equation}
where for~$x=(x_i)\in \mathbb{A}^d$ we put
 \[
 \mathbf{d}_L(x):=\gamma  \sup_{\substack{i\in I_{t-1}\\j\in I_{r-1}}} \big((x_d)_\infty, (x_{t-1+j})_\infty +a_j(x_i)_\infty, a_r(x_i)_\infty\big)+\xi \sup_{i\in I_{t-1}}\big(    (x_i)_\infty\big) .
 \]


As in the case of the zeta function of the base field~$K$ (see Section~\ref{subsection arithmetic tools}), we have the following relation.
 
	\begin{lem}\label{lem zetaUL=ZetaUL}
 Assume~$a_r>0$. Then, with the above notation, we have the equality 
 \[
 \zeta_{U,L}(s)=\Zet_{U,L}(q^{-s}).
 \]
	\end{lem}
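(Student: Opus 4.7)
The plan is to reduce the statement to an unfolding of definitions using the explicit parametrization from Lemma~\ref{lem:explicit_U}, followed by an identification of each local factor of the height as a power of $q$ indexed by a divisor of infinity, and finally a simplification of the resulting supremum via Lemma~\ref{lem:sepDivisor}. Concretely, by Lemma~\ref{lem:explicit_U} the map $\varphi:K^d\to U(K)$ sending $(x_1,\ldots,x_d)$ to $([x_d:(\tilde{x}_{ij})_{i,j}],[x_1:\ldots:x_{t-1}:1])$ is a bijection, so
$$\zeta_{U,L}(s)=\sum_{x\in K^d}H_L(\varphi(x))^{-s}.$$

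For the second step, note that in these distinguished affine coordinates the homogeneous coordinates include $y_t=\tilde{x}_{tr}=1$, so the constant $1$ appears inside each of the two local suprema in the defining formula~\eqref{eq height function HL}. Using the elementary identity $\sup\{1,|a|_v\}=q^{f_v v((a)_\infty)}$ for $a\in K$, together with $(x^n)_\infty=n(x)_\infty$ for integers $n\geq 0$, each local factor becomes $q^{f_v v(D_{i,v})}$ for suitable divisors $D_{i,v}$ supported at $v$. Taking the product over all $v\in \Val(K)$ and recalling $\deg(D)=\sum_v f_v v(D)$ gives
$$H_L(\varphi(x))=q^{\gamma\deg(D_1)+\xi\deg(D_2)},$$
where, explicitly,
$$D_1=\sup\bigl((x_d)_\infty,\, (x_{t+j-1})_\infty\ (j<r),\, (x_{t+j-1}x_i^{a_j})_\infty\ (i<t,j<r),\, (x_i^{a_r})_\infty\ (i<t)\bigr)$$
and $D_2=\sup_{i<t}(x_i)_\infty$.

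The remaining step, which I view as the main obstacle, is to verify that $D_1$ coincides with the simplified supremum appearing inside $\mathbf{d}_L(x)$. Applying Lemma~\ref{lem:sepDivisor} to the triple $\{(x_{t+j-1})_\infty,(x_{t+j-1}x_i^{a_j})_\infty,(x_i^{a_j})_\infty\}$ for each $(i,j)$ with $i<t$, $j<r$ gives
$$\sup\{(x_{t+j-1})_\infty,(x_{t+j-1}x_i^{a_j})_\infty,(x_i^{a_j})_\infty\}=(x_{t+j-1})_\infty+a_j(x_i)_\infty.$$
The $\leq$ inclusion between $D_1$ and the sup in $\mathbf{d}_L(x)$ follows by noting that the right-hand side of the previous display dominates both $(x_{t+j-1})_\infty$ and $(x_{t+j-1}x_i^{a_j})_\infty$. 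The reverse inclusion uses the crucial monotonicity $a_j\leq a_r$: the term $(x_i^{a_j})_\infty=a_j(x_i)_\infty$ does not explicitly appear in $D_1$, but it is dominated by $(x_i^{a_r})_\infty=a_r(x_i)_\infty$, which does, so $D_1$ already dominates each triple sup above and hence dominates $(x_{t+j-1})_\infty+a_j(x_i)_\infty$ too.

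Putting the three steps together yields $\deg(\mathbf{d}_L(x))=\gamma\deg(D_1)+\xi\deg(D_2)$, hence $H_L(\varphi(x))^{-s}=q^{-s\deg(\mathbf{d}_L(x))}=T^{\deg(\mathbf{d}_L(x))}$ with $T=q^{-s}$. Summing over $x\in K^d$ recovers $\Zet_{U,L}(q^{-s})$ as in~\eqref{eq def Z(T)}, which establishes the claimed equality (as a formal identity of power series, and as a genuine equality in the half-plane of absolute convergence of $\zeta_{U,L}(s)$).
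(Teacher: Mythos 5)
Your proposal is correct and follows essentially the same route as the paper: parametrize $U(K)$ by $\mathbb{A}^d(K)$ via Lemma~\ref{lem:explicit_U}, convert the local suprema in~\eqref{eq height function HL} into divisors of poles, and simplify using Lemma~\ref{lem:sepDivisor} together with the monotonicity $a_j\leq a_r$. The only cosmetic difference is that the paper inserts the extra term $a_j(x_i)_\infty$ into the supremum first (justified by $a_j\leq a_r$) and then collapses the triple via Lemma~\ref{lem:sepDivisor} in one pass, whereas you apply Lemma~\ref{lem:sepDivisor} and then verify a two-sided inclusion between your $D_1$ and the supremum inside $\mathbf{d}_L(x)$; the two are logically equivalent.
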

 
	\begin{proof}
		Let $x=(x_i)\in \mathbb{A}^d$  and denote by~$P_x$ the image of~$x$ in~$U$ under the map  defined in Lemma~\ref{lem:explicit_U}. Then, we have
		
   \begin{align*}
				H_L(P_x)&=
				\prod_{v}\sup_{\substack{i\in I_{t-1}\\j\in I_{r-1}}}\left\lbrace|x_d|_v,|x_{t-1+j}x_i^{a_j}|_v, |x_{t-1+j}|_v,|x_i^{a_r}|_v,1  \right\rbrace ^{\gamma}\prod_{v}\sup_{i\in I_{t-1}} \left\lbrace|x_i|_v, 1 \right\rbrace^{\xi },
			\end{align*} 
   where each product runs over all~$v\in \Val(K)$. Putting
   \begin{eqnarray*}
       V_1 & := & \left\{v\in \Val(K):\inf_{\substack{i\in I_{t-1}\\j\in I_{r-1}}}\left\lbrace v(x_d),v(x_{t-1+j}x_{i}^{a_j}),v(x_{t-1+j}),v(x_i^{a_r})\right\rbrace <0\right\}, \\
       V_2 & := & \left\{v\in \Val(K): \inf_{i\in I_{t-1}}\left\lbrace v(x_{i})\right\rbrace <0\right\},
   \end{eqnarray*}
   we get
   \begin{align*}
				H_L(P_x)&=\prod_{v\in V_1}q^{-f_v\gamma\inf_{\substack{i\in I_{t-1} \\j\in I_{r-1}}}\left\lbrace v(x_d),v(x_{t-1+j}x_{i}^{a_j}),v(x_{t-1+j}),v(x_i^{a_r})\right\rbrace }\prod_{v\in V_2}q^{-f_v\xi \inf_{i\in I_{i-1}}\left\lbrace  v(x_i)\right\rbrace }.
			\end{align*} 
   By taking $\log_q$ on both sides, we obtain
   	 \begin{align*}
				\log _q H_L(P_x)&=\gamma\sum_{v\in V_1}-f_v \inf_{\substack{i\in I_{t-1}\\j\in I_{r-1}}}\left\lbrace v(x_d),v(x_{t-1+j}x_{i}^{a_j}),v(x_{t-1+j}),v(x_i^{a_r})\right\rbrace \\
    & \hspace{3cm} +\xi\sum_{v\in V_2}-f_v\inf_{i\in I_{t-1}}\left\lbrace  v(x_i)\right\rbrace\\
				&=\gamma\deg \left( \sup_{\substack{i\in I_{t-1}\\j\in I_{r-1}}} \big((x_d)_\infty, (x_{t-1+j}x_i^{a_j})_\infty, (x_{t-1+j})_\infty, a_r(x_i)_\infty\big)\right)\\
    & \hspace{3cm}+\xi \deg \left(\sup_{i\in I_{t-1}}\big(  (x_i)_\infty\big)  \right).
			\end{align*}  
		Since $a_j(x_i)_\infty\leq a_r(x_i)_\infty$, we can add $a_j(x_i)_\infty$ inside the first supremum and use Lemma~\ref{lem:sepDivisor} to get
  		 \begin{align*}
		\log_q H_L(P_x)&=\gamma\deg \left( \sup_{\substack{i\in I_{t-1}\\j\in I_{r-1}}} \big((x_d)_\infty, (x_{t-1+j})_\infty +a_j(x_i)_\infty, a_r(x_i)_\infty\big) \right)\\
    & \hspace{3cm}+ \xi\deg \left(\sup_{i\in I_{t-1}}\big(  (x_i)_\infty\big)  \right) \\
    &=\deg(\mathbf{d}_L(x)).
		\end{align*}  
  It follows that~$H_L(P_x)^{-s}=q^{-s \deg(\mathbf{d}_L(x))}$, hence
  $$\zeta_{U,L}(s)=\sum_{x\in \A^d} H_L(P_x)^{-s}=\sum_{x\in \A^d}q^{-s \deg(\mathbf{d}_L(x))}=\Zet_{U,L}(q^{-s})$$
  by Lemma~\ref{lem:explicit_U}. This completes the proof of the lemma.
	\end{proof}

Analogous to Bourqui in~\cite[p.~351]{BOURQUI2002}, we define the counting functions 
	\begin{equation*}
		\begin{split}
			\tilde{R}_L(D):=&\#\left\lbrace x\in \mathbb{A}^d: 
    \mathbf{d}_L(x)=D \right\rbrace,\\
    R_L(D):=&\#\left\lbrace x\in \mathbb{A}^d: \mathbf{d}_L(x)
      \leq D \right\rbrace.
		\end{split}
	\end{equation*} 
 
	\begin{remark}\label{remark:R-mu-couple}
	    Note that 
	$R_L(D)=\sum_{0\leq D'\leq D}\tilde{R}_L(D')$, so $(R_L,\tilde{R}_L)$ forms a $\mu$-couple. 
	\end{remark}

It follows from the definition of~$\Zet_{U,L}(T)$ in~\eqref{eq def Z(T)} that
	$$\Zet_{U,L}(T)=\sum_{D\geq 0}\tilde{R}_L(D)T ^{\deg (D)}.$$
 
The following result is crucial for the study of the series $\Zet_{U,L}(s)$, as it allows us to relate this series to the zeta function of the base field~$K$.

	\begin{lem}\label{lem:prodOfZT}
 Assuming~$a_r>0$, the following relation holds:
		$$\Zet_{U,L}(T)=\left( \sum_{D\geq 0}R_L(D)T^{\deg (D)}\right)\frac{1}{\Zet_K(T)}.$$ 
	\end{lem}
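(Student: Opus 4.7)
The plan is a direct application of the M\"obius inversion machinery recalled in Section~\ref{sec preliminary lemmas}, together with the identity for $1/\Zet_K(T)$ from Lemma~\ref{lem inversion Z_K(T)}. First I would note that all series involved are well-defined formal power series in $T$: for each $D \in \Div^+(K)$ the counting function $R_L(D)$ is finite because the condition $\mathbf{d}_L(x) \leq D$ forces each coordinate $x_i$ of $x \in \A^d$ into a finite-dimensional $\F_q$-subspace of $K$, and for each integer $n\geq 0$ there are only finitely many $D\in \Div^+(K)$ with $\deg(D)=n$.

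Next, by Remark~\ref{remark:R-mu-couple} the pair $(R_L,\tilde{R}_L)$ is a $\mu$-couple, so Lemma~\ref{lem:muCouple} yields
\[
\tilde{R}_L(D)=\sum_{0\leq D'\leq D}\mu(D-D')R_L(D') \quad \text{for all } D\in \Div^+(K).
\]
I would substitute this into the defining expansion $\Zet_{U,L}(T)=\sum_{D\geq 0}\tilde{R}_L(D)T^{\deg(D)}$ and reindex via the change of variables $D=D'+D''$ with $D',D''\geq 0$, using the additivity of $\deg$. This gives the factorization
\[
\Zet_{U,L}(T)=\left(\sum_{D'\geq 0}R_L(D')T^{\deg(D')}\right)\left(\sum_{D''\geq 0}\mu(D'')T^{\deg(D'')}\right).
\]

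Finally, Lemma~\ref{lem inversion Z_K(T)} identifies the second factor with $1/\Zet_K(T)$, completing the proof. There is no real obstacle here: the only point requiring a minimum of care is justifying the reindexing, which is legitimate at the level of formal power series because every coefficient of $T^n$ on either side is a finite sum.
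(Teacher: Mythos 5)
Your proposal is correct and follows essentially the same route as the paper's own proof: invoke the $\mu$-couple relation for $(R_L,\tilde{R}_L)$, substitute into the series for $\Zet_{U,L}(T)$, reindex the double sum over $D=D'+D''$, and apply Lemma~\ref{lem inversion Z_K(T)}. Your preliminary remark justifying finiteness of the coefficients is a sensible extra sanity check, but it does not change the substance of the argument.
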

\begin{proof}Since $(R_L, \tilde{R}_L)$ is a $\mu$-couple, using Lemma \ref{lem:muCouple} we get
		\begin{equation*}
			\begin{split}
				\Zet_{U,L}(T)&=\sum_{D\geq 0}\tilde{R}_L(D)T ^{\deg(D)}= \sum_{D\geq 0}\sum_{0\leq D'\leq D}\mu(D-D')R_L(D')T^{\deg(D)}\\
				&=\sum_{D\geq0}\sum_{D'\geq 0}\mu(D)R_L(D')T^{\deg(D+D')}=\left( \sum_{D\geq 0}R_L(D)T^{\deg(D)}\right)\left( \sum_{D'\geq 0}\mu(D')T^{\deg(D')}\right) \\
				&=\left( \sum_{D\geq 0}R_L(D)T ^{\deg(D)}\right) \frac{1}{\Zet_K(T)},
			\end{split}
		\end{equation*}
  where in the last equality we used Lemma~\ref{lem inversion Z_K(T)}. This proves the desired identity.
	\end{proof}

\subsection{Proof of Theorem~\ref{thm:ThmGeneralversion}}\label{sec proof of general thm}

In this section we give the proof of Theorem~\ref{thm:ThmGeneralversion}, using the preliminary lemmas from Section~\ref{sec preliminary lemmas}.

For the sake of the reader's convenience, we divide the proof into four independent steps.

\noindent  {\bf Step 1. Rewriting the function $R_L(D)$.} For positive integers $n,m$ and~$D\in \Div^+(K)$, we define
    \begin{equation}\label{eq def of Nmn(D)}
      N_m^n(D):=\#\left\{(x_i)\in \mathbb{A}^m:n\sup_{i\in I_m}(x_i)_\infty\leq D\right\},  
    \end{equation}
	and
 \begin{equation}\label{eq def of tildeNm(D)}
	\tilde{N}_m(D):=\#\left\{(x_i)\in \mathbb{A}^m:\sup_{i\in I_m}(x_i)_\infty=D\right\}.
 \end{equation}
     Note that $(N_m^1, \tilde{N}_m)$ forms a $\mu$-couple and $N_m^1(D)=(N_1^1(D))^m=q^{m\ell(D)}$ (see Section~\ref{subsection arithmetic tools}).
     
	For each integer~$c\geq 1$, we also define the set 
 $$\Div_c:=\{D\in \Div^+(K):v(D)\leq c \text{ for all }v\in \Val(K)\},$$
 and  note that by the division algorithm all divisor~$D\geq 0$ can be written uniquely as $D=(c+1)D_1+D_2$, where $D_1\geq 0$ and $D_2\in \Div_c$. Observe also that 
 \begin{equation}\label{eq prop of N1}
 N_1^{c+1}((c+1)D_1+D_2)=N_1^1(D_1) \quad \text{for all }D_1\geq 0,D_2\in \Div_c.
 \end{equation}
 In particular, choosing~$c=c_L-1$ where
 $$c_L:=\gamma a_r+\xi,$$
 we have that 
  every~$D\geq 0$ may be written uniquely as 
 \begin{equation}\label{eq D decomposition wrt c_L}
 D=c_LD_1+D_2, \quad \text{with }D_1\geq 0,D_2\in \Div_{c_L-1},     
 \end{equation}
 and for every~$D'\geq 0$, we have
 \begin{equation}\label{eq criterio ineq D'leq D}
 c_L D'\leq D \Leftrightarrow D'\leq D_1.     
 \end{equation}

Now, recalling that~$d=r+t-1$, it follows from the definition of~$R_L(D)$ and~$ \mathbf{d}_L(x)$ that
	\begin{equation*}
		\begin{split}
			R_L(D) &= \#\left\lbrace (x_i)\in \mathbb{A}^d:\gamma \sup_{\substack{i\in I_{t-1}\\j\in I_{r-1}}} \big((x_d)_\infty,(x_{t-1+j})_\infty +a_j (x_i)_\infty , a_r(x_i)_\infty\big) +\xi \sup_{i\in I_{t-1}} \big( (x_i)_\infty\big) \leq D \right\rbrace \\
   &=\sum_{0\leq D'}\tilde{N}_{t-1}(D')\# \big\lbrace (x_i)\in \mathbb{A}^{r}:\gamma \sup_{j\in I_{r-1}}((x_r)_\infty,(x_{j})_\infty +a_jD', a_rD') +\xi D'\leq D\big\rbrace.
		\end{split}
	\end{equation*}
 Since
 $$c_LD'\leq \gamma \sup_{j\in I_{r-1}}((x_r)_\infty,(x_{j})_\infty +a_jD', a_rD') +\xi D',$$
 writing~$D$ as in~\eqref{eq D decomposition wrt c_L} we  get (using~\eqref{eq criterio ineq D'leq D})
 	\begin{equation*}
 \label{eq:newRD}
		\begin{split}
			R_L(D) 
   &=\sum_{0\leq D'\leq D_1}\tilde{N}_{t-1}(D')\# \big\lbrace (x_i)\in \mathbb{A}^{r}:\gamma \sup_{j\in I_{r-1}}((x_r)_\infty,(x_{j})_\infty +a_jD', a_rD') +\xi D'\leq D\big\rbrace.
		\end{split}
	\end{equation*}
Furthermore, using~\eqref{eq criterio ineq D'leq D} we see that for $D'$ satisfying~$0\leq D'\leq D_1$, 
the condition
	$$\gamma \sup_{j\in I_{r-1}}((x_r)_\infty,(x_{j})_\infty +a_jD', a_rD') +\xi D'\leq D,$$
	is equivalent to the system of inequalities
	$$\left\lbrace \begin{array}{ll}
		\gamma  (x_r)_\infty\leq D-\xi D',	&  \\
		\gamma  (x_{j})_\infty \leq D-\left( \xi+\gamma a_j\right)D',	& \text{for all } j\in I_{r-1}.
	\end{array}\right. $$
	Therefore, we obtain
	\begin{equation*}
      R_L(D)=\sum_{0\leq D'\leq D_1}\tilde{N}_{t-1}(D')
		N_{1}^{\gamma }\left( D-\xi D'\right)\prod_{j=1}^{r-1}N_1^{\gamma}\left( D-\left(\xi+\gamma a_j\right)D'\right).
	\end{equation*}
 Putting~$a_0:=0$, we rewrite this identity as
 \begin{equation}\label{eq:expresionForR(D)}
		R_L(D)=\sum_{0\leq D'\leq D_1}\tilde{N}_{t-1}(D')\prod_{j=0}^{r-1}N_1^{\gamma}\left( D-\left(\xi+\gamma a_j\right)D'\right).
 \end{equation}


\noindent    {\bf Step 2. Analysis of the formal series $\sum_{D\geq 0}R_L(D)T^{\deg (D)}$.}
    Let us define
    $$\Zet_1(T):=\sum_{D\geq 0}R_L(D)T^{\deg (D)}.$$
    Using~\eqref{eq D decomposition wrt c_L} we have
	\begin{equation*}
		\Zet_1(T)=\sum_{D_1\geq 0, D_2\in \Div_{c_L-1}}R_L\left( c_L D_1+D_2\right) T^{\deg\left( c_L  D_1+D_2\right) }.
	\end{equation*}
	Fixing now $D_1\geq 0,D_2\in  \Div_{c_L-1}$ and using (\ref{eq:expresionForR(D)}) we have
 	\begin{align*}
			&R_L\left( c_L D_1+D_2\right) T^{\deg(c_LD_1+D_2)}\\
			=&\sum_{0\leq D'\leq D_1}\tilde{N}_{t-1}(D')\prod_{j=0}^{r-1} N_1^{\gamma }\left( c_L D_1+D_2-\left(\xi+\gamma a_j\right)D'\right) T^{\deg\left( c_LD_1+D_2\right) }\\
			=&\sum_{0\leq D'\leq D_1}\tilde{N}_{t-1}(D')\prod_{j=0}^{r-1} N_1^{\gamma}\left( c_L (D_1-D')+D_2+\gamma (a_r-a_j)D' \right) T^{\deg\left( c_L(D_1-D')+D_2+c_LD' \right)}.
	\end{align*}   
  Writing~$D_1-D'=D''$ with~$D''\geq 0$ we get (using~\eqref{eq prop of N1})
   	\begin{align*}
			\Zet_1(T)&=\sum_{\substack{D'\geq 0,D''\geq 0\\D_2\in \Div_{c_L-1}}}\tilde{N}_{t-1}(D')\prod_{j=0}^{r-1} N_1^{\gamma }\left( c_L D''+D_2+\gamma (a_r-a_j)D' \right)T^{\deg \left( c_LD''+D_2+c_LD' \right) }\\
			&=\sum_{D\geq 0, D'\geq 0}\tilde{N}_{t-1}(D')\prod_{j=0}^{r-1} N_1^{\gamma }\left( D+\gamma (a_r-a_j)D' \right)T^{\deg \left( D+c_LD' \right) }\\
			&=\sum_{\substack{D\geq 0,D'\geq 0\\D_1\in \Div_{\gamma-1}}}\tilde{N}_{t-1}(D')\prod_{j=0}^{r-1} N_1^{\gamma }\left( \gamma D+D_1+\gamma (a_r-a_j)D' \right)T^{\deg \left( \gamma D+D_1+c_LD' \right) }\\
			&=\sum_{\substack{D\geq 0,D'\geq 0\\D_1\in \Div_{\gamma-1}}}\tilde{N}_{t-1}(D')\prod_{j=0}^{r-1} N_1^{1}\left( D+(a_r-a_j)D' \right)T^{\deg \left( \gamma D+D_1+c_LD' \right)}.
   \end{align*}    
   Recall that~$N_X=\#\{j\in \{1,\ldots,r\}:a_j=a_r\}$ and note that~$j=r-N_X$ is the largest non-negative index satisfying~$a_j<a_r$. Hence, we have
   \begin{align*}
			\Zet_1(T)
			&=\sum_{\substack{D\geq 0,D'\geq 0\\D_1\in \Div_{\gamma-1}}}\tilde{N}_{t-1}(D')N_1^1(D)^{N_X-1}\prod_{j=0}^{r-N_X} N_1^{1}\left( D+(a_r-a_j)D' \right) T^{\deg \left( \gamma D+D_1+c_LD' \right)}.
   \end{align*} 
Now, define
   \begin{align*}
			\Zet_2(T)		&:=\sum_{\substack{D\geq 0,D'\geq 0\\D_1\in \Div_{\gamma-1}}}\tilde{N}_{t-1}(D')N_1^1(D)^{N_X-1}\prod_{j=0}^{r-N_X}q^{1-g}q^{\deg (D)+(a_r-a_j)\deg (D')} T^{\deg \left( \gamma D+D_1+c_LD' \right)}.
	\end{align*}     
Putting~$A:=\min\{a_r-a_j:j\in \{0,\ldots,r-N_X\}\}=a_{r}-a_{r-N_X}$, we note by the Riemann--Roch theorem (see Section~\ref{subsection:Notation}) that
$$ N_1^{1}\left( D+(a_r-a_j)D' \right)=q^{1-g}q^{\deg (D)+(a_r-a_j)\deg (D')} \quad \text{for all }j\in \{0,\ldots,r-N_X\},$$
provided~$\deg(D+AD')> 2g-2$. This implies
   \begin{eqnarray}
       		\Zet_1(T)-\Zet_2(T)		&=&\sum_{\substack{D\geq 0,D'\geq 0\\ \deg(D)+A\deg(D')\leq 2g-2 \\D_1\in \Div_{\gamma-1}}}\tilde{N}_{t-1}(D')N_1^1(D)^{N_X-1}\left(\prod_{j=0}^{r-N_X} N_1^{1}( D+(a_r-a_j)D') \right. \notag  \\
			&&\hspace{2cm} \left.- q^{(r+1-N_X)(1-g)}\prod_{j=0}^{r-N_X}q^{\deg (D)+(a_r-a_j)\deg (D')}\right) T^{\deg \left( \gamma D+D_1+c_LD' \right)} \notag \\
   &=& P_1(T)\sum_{D_1\in \Div_{\gamma-1}}T^{\deg(D_1)}, \label{eq Z1=Z2+P1}
   \end{eqnarray}  %
   where~$P_1(T)\in \QQ[T]$, since~$A>0$ and there are only finitely many pairs of divisors~$D\geq 0,D'\geq 0$ satisfying~$\deg(D+AD')\leq 2g-2$.
Next, we further compute
	\begin{align*}
			\Zet_2(T)=&q^{(r+1-N_X)(1-g)}\left( \sum_{D'\geq 0}\tilde{N}_{t-1}(D')q^{ \left( (r+1) a_r-|\mathbf{a}| \right) \deg \left(D'\right)}T^{c_L\deg (D')}\right) \\
			&\hspace{2cm}\times \left(\sum_{D\geq 0}N_1^1(D)^{N_X-1}\left( q^{r+1-N_X}T^{\gamma} \right)^{\deg (D)}  \right) \left( \sum_{D_1\in \Div_{\gamma-1}}T^{\deg (D_1)}\right).
	\end{align*}
Putting
	\begin{align*}
\Zet_3(T):=&q^{(r+1-N_X)(1-g)}\left( \sum_{D'\geq 0}\tilde{N}_{t-1}(D')q^{ \left( (r+1) a_r-|\mathbf{a}| \right) \deg \left(D'\right)}T^{c_L\deg (D')}\right) \\
			&\hspace{2cm}\times \left(\sum_{D\geq 0}\left(q^{1-g+\deg(D)}\right)^{N_X-1}\left( q^{r+1-N_X}T^{\gamma} \right)^{\deg (D)}  \right) \left( \sum_{D_1\in \Div_{\gamma-1}}T^{\deg (D_1)}\right)\\
   =&q^{r(1-g)}\left( \sum_{D'\geq 0}\tilde{N}_{t-1}(D')q^{ \left( (r+1) a_r-|\mathbf{a}| \right) \deg \left(D'\right)}T^{c_L\deg (D')}\right) \\
			&\hspace{2cm}\times \left(\sum_{D\geq 0}\left( q^{r}T^{\gamma} \right)^{\deg (D)}  \right) \left( \sum_{D_1\in \Div_{\gamma-1}}T^{\deg (D_1)}\right)\\
\end{align*}
we have, using the Riemann--Roch theorem once more, that
\begin{equation}\label{eq Z2-Z3}
\begin{split}
 \Zet_2(T)-\Zet_3(T)=&q^{(r+1-N_X)(1-g)}\left( \sum_{D'\geq 0}\tilde{N}_{t-1}(D')q^{ \left( (r+1) a_r-|\mathbf{a}| \right) \deg \left(D'\right)}T^{c_L\deg (D')}\right)\\
&\hspace{2cm}\times  P_2(T) \left( \sum_{D_1\in \Div_{\gamma-1}}T^{\deg (D_1)}\right),   
\end{split}
\end{equation}
   with
$$P_2(T):=\sum_{\substack{D\geq 0\\ \deg(D)\leq 2g-2}}\left(N_1^1(D)^{N_X-1}-\big(q^{1-g+\deg(D)}\big)^{N_X-1}\right) \left(q^{r+1-N_X}T^{\gamma} \right)^{\deg (D)}\in \QQ[T].$$

\noindent   {\bf Step 3. Rewriting the formal series~$\Zet_3(T)$.} 
	Taking into account that $(N_{t-1}^1,\tilde{N}_{t-1})$ is a $\mu$-couple, and using Lemma \ref{lem:prodOfZT}, we have
 	\begin{equation*}
		\begin{split}
			&\sum_{D'\geq 0}\tilde{N}_{t-1}(D') q^{\left( (r+1) a_r-|\mathbf{a}| \right) \deg\left(D'\right)}T^{c_L\deg(D')}\\
			=&\sum_{D\geq 0}\sum_{0\leq D'\leq D}\mu(D-D')N_{t-1}^1(D') q^{\left( (r+1) a_r-|\mathbf{a}| \right) \deg\left(D\right)}T^{c_L\deg(D)}\\
			=&\sum_{D\geq 0}\sum_{D'\geq 0}\mu(D)N_{t-1}^1(D')q^{ \left( (r+1) a_r-|\mathbf{a}| \right) \deg (D+D')+}T^{c_L \deg (D+D')}\\
			=&\left( \sum_{D'\geq 0}N_{1}^1(D')^{t-1}q^{ \left( (r+1) a_r-|\mathbf{a}| \right) \deg (D')}T^{c_L\deg (D')}\right)\left(\sum_{D\geq 0}\mu(D)q^{ \left( (r+1) a_r-|\mathbf{a}| \right) \deg (D)}T^{c_L\deg (D)} \right) \\
   =& \left( \sum_{D'\geq 0}N_{1}^1(D')^{t-1}q^{ \left( (r+1) a_r-|\mathbf{a}| \right) \deg (D')}T^{c_L\deg (D')}\right)\frac{1}{\Zet_K\left( q^{  (r+1) a_r-|\mathbf{a}| }T^{ c_L}\right)}.
		\end{split}
	\end{equation*}
 Using again the Riemann--Roch theorem, we obtain
\begin{equation*}
		\begin{split}
			&\sum_{D'\geq 0}\tilde{N}_{t-1}(D') q^{\left( (r+1) a_r-|\mathbf{a}| \right) \deg\left(D'\right)}T^{c_L\deg(D')}\\
			=&\left( \sum_{D'\geq 0}q^{(t-1)(1-g)}q^{(t-1)\deg (D')}q^{ \left( (r+1) a_r-|\mathbf{a}| \right) \deg (D')}T^{c_L\deg (D') }+P_3(T)\right)\frac{1}{\Zet_K\left( q^{  (r+1) a_r-|\mathbf{a}| }T^{ c_L  }\right) }\\
			=&\left( q^{(t-1)(1-g)}\Zet_K\left(q^{  (r+1) a_r-|\mathbf{a}|  +t-1}T^{c_L } \right) +P_3(T)\right)\frac{1}{\Zet_K\left( q^{  (r+1) a_r-|\mathbf{a}|}T^{c_L }\right) },
   \end{split}
   \end{equation*}
   where
 \begin{equation*}
		\begin{split}
			P_3(T):=&\sum_{\substack{D'\geq 0\\\deg(D')\leq 2g-2}}\left(N_{1}^1(D')^{t-1}- q^{(t-1)(1-g+\deg (D'))}\right)q^{ \left( (r+1) a_r-|\mathbf{a}| \right) \deg (D')}T^{c_L\deg (D')}\in \QQ[T].
   \end{split}
   \end{equation*}
 We conclude
 \begin{equation}\label{eq Z3}
		\begin{split}
			\Zet_3(T)&=\left( q^{(t-1)(1-g)}\Zet_K\left(q^{  (r+1) a_r-|\mathbf{a}|  +t-1}T^{c_L } \right) +P_3(T)\right)
    \\
& \hspace{3cm} \times \frac{q^{r(1-g)}\Zet_K\left( q^rT^{\gamma}\right)}{\Zet_K\left( q^{  (r+1) a_r-|\mathbf{a}|}T^{ c_L  }\right)}\left( \sum_{D_1\in \Div_{\gamma-1}}T^{\deg (D_1)}\right).
   \end{split}
   \end{equation}
Note that, by~\eqref{eq Z2-Z3}, we also get
\begin{equation}\label{eq Z2 and Z3}
\begin{split}
        \Zet_2(T)-\Zet_3(T)=&q^{(r+1-N_X)(1-g)}\left( q^{(t-1)(1-g)}\Zet_K\left(q^{  (r+1) a_r-|\mathbf{a}|  +t-1}T^{c_L } \right) +P_3(T)\right)\\
&\hspace{2cm}\times  \frac{P_2(T)}{\Zet_K\left( q^{  (r+1) a_r-|\mathbf{a}|}T^{c_L }\right) }\left( \sum_{D_1\in \Div_{\gamma-1}}T^{\deg (D_1)}\right).
\end{split}
\end{equation}

\noindent    {\bf Step 4. Analytic behaviour of $\zeta_{U,D}(s)$.}  Since all $D\geq 0$ can be written as $D=\gamma D_1+D_2,$
	with $D_1\geq 0$ and $D_2\in \Div_{\gamma-1}$, we have
 \[\Zet_K(T)=\left( \sum_{D\geq 0}T^{\gamma \deg(D)}\right) \left(\sum_{D\in \Div_{\gamma-1}}T^{\deg (D)}\right).
 \]
	This implies
	\[
 \sum_{D\in \Div_{\gamma-1}}T^{\deg(D)}=\frac{\Zet_K(T)}{\Zet_K(T^{\gamma})},
 \]
 and by Lemma~\ref{lem:prodOfZT} together with~\eqref{eq Z1=Z2+P1} we get
 $$\Zet_{U,L}(T)=\frac{\Zet_1(T)}{\Zet_K(T)}=\frac{\Zet_2(T)}{\Zet_K(T)}+\frac{P_1(T)}{\Zet_K(T^{\gamma})}.$$
 Then, using~\eqref{eq Z2 and Z3} we have
 \begin{equation*}
     \begin{split}
  \Zet_{U,L}(T)=&\frac{\Zet_3(T)}{\Zet_K(T)}+ \frac{q^{(d+1-N_X)(1-g)}P_2(T)\Zet_K\left(q^{  (r+1) a_r-|\mathbf{a}|  +t-1}T^{c_L } \right)}{\Zet_K\left( q^{  (r+1) a_r-|\mathbf{a}|}T^{c_L }\right) \Zet_K(T^{\gamma})} \\
&\hspace{2cm}   +\frac{q^{(r+1-N_X)(1-g)}P_3(T)P_2(T)}{\Zet_K\left( q^{  (r+1) a_r-|\mathbf{a}|}T^{c_L }\right) \Zet_K(T^{\gamma})}+\frac{P_1(T)}{\Zet_K(T^{\gamma})} 
     \end{split}
 \end{equation*}
 Finally, using~\eqref{eq Z3} we get
 \begin{equation*}
     \begin{split}
  \Zet_{U,L}(T)=& \frac{q^{d(1-g)}\Zet_K\left(q^{  (r+1) a_r-|\mathbf{a}|  +t-1}T^{c_L } \right)\Zet_K\left( q^rT^{\gamma}\right)}{\Zet_K\left( q^{  (r+1) a_r-|\mathbf{a}|}T^{ c_L  }\right)\Zet_K(T^{\gamma})}+
\frac{q^{r(1-g)}P_3(T)\Zet_K\left( q^rT^{\gamma}\right)}{\Zet_K\left( q^{  (r+1) a_r-|\mathbf{a}|}T^{ c_L  }\right)\Zet_K(T^{\gamma})}\\
&+ \frac{q^{(d+1-N_X)(1-g)}P_2(T)\Zet_K\left(q^{  (r+1) a_r-|\mathbf{a}|  +t-1}T^{c_L } \right)}{\Zet_K\left( q^{  (r+1) a_r-|\mathbf{a}|}T^{c_L }\right) \Zet_K(T^{\gamma})}   +\frac{q^{(r+1-N_X)(1-g)}P_3(T)P_2(T)}{\Zet_K\left( q^{  (r+1) a_r-|\mathbf{a}|}T^{c_L }\right) \Zet_K(T^{\gamma})}\\
&\hspace{0.5cm}+\frac{P_1(T)}{\Zet_K(T^{\gamma})}.
     \end{split}
 \end{equation*}
	By Lemma~\ref{lem zetaUL=ZetaUL} we conclude
	\begin{align*}
	   \zeta_{U,L}(s)
			&= \frac{q^{d(1-g)}\zeta_K\left( \left( \gamma a_r+\xi \right) s-\left( (r+1)a_r-|\mathbf{a}|  +t-1\right) \right)\zeta_K(\gamma s-r)}{\zeta_K\left(\left( \gamma a_r+\xi \right)s-\left( (r+1)a_r-|\mathbf{a}|\right)  \right)\zeta_K(\gamma s) } \\
			&\hspace{.5cm}+\frac{q^{r(1-g)}P_3(q^{-s})\zeta_K(\gamma s-r)}{\zeta_K\left(\left( \gamma a_r+\xi \right)s-\left( (r+1) a_r-|\mathbf{a}|\right)   \right)\zeta_K(\gamma s)}\\
   &\hspace{1cm}+\frac{q^{(d+1-N_X)(1-g)}P_2(q^{-s})\zeta_K\left( \left( \gamma a_r+\xi \right) s-\left( (r+1)a_r-|\mathbf{a}|  +t-1\right) \right)}{\zeta_K\left(\left( \gamma a_r+\xi \right)s-\left( (r+1) a_r-|\mathbf{a}|\right)   \right)\zeta_K(\gamma s)}    \\
   &\hspace{1.5cm}+\frac{q^{(r+1-N_X)(1-g)}P_3(q^{-s})P_2(q^{-s})}{\zeta_K\left(\left( \gamma a_r+\xi \right)s-\left( (r+1) a_r-|\mathbf{a}|\right)   \right)\zeta_K(\gamma s)}+\frac{P_1(q^{-s})}{\zeta_K(\gamma s)}.    
	\end{align*}
It follows from the properties of the zeta function~$\zeta_K(s)$ (see Section~\ref{subsection arithmetic tools}) that~$\zeta_{U,L}(s)$ is a rational function in~$q^{-s}$. Moreover, using Corollary~\ref{cor inverse of zetaK converges absolutely} and~\eqref{eq a(L) and b(L)} we see that~$\zeta_{U,L}(s)$ converges absolutely for
$$\Re(s)>\max\{A_L,B_L\}=a(L),$$
with~$A_L,B_L$ defined in~\eqref{eq def of AL and BL}. Moreover, noting that the sets~$\mathcal{A}_L$ and~$\mathcal{B}_L$ are disjoint (because~$\mathrm{g.c.d.}(\gamma,\gamma a_r+\xi)=\mathrm{g.c.d.}(\gamma,\xi)=1$), we get the following properties:
\begin{enumerate}
    \item If~$A_L=B_L$, then~$\zeta_{U,L}(s)$ has 
    a pole of order 2 at~$s=a(L)$ with
    \begin{equation*}
        \begin{split}
        \lim_{s\to a(L)}(s-a(L))^2\zeta_{U,L}(s)&=\frac{q^{d(1-g)}}{\zeta_K(t)\zeta_K(r+1)}\frac{\big(\textup{Res}_{s=1}\zeta_K(s)\big)^2}{(\gamma a_r+\xi)\gamma}\\
        &=\frac{q^{(d+2)(1-g)} h_K^2}{\zeta_K(t)\zeta_K(r+1)(\gamma a_r+\xi)\gamma (q-1)^2\log(q)^2}.    
        \end{split}
    \end{equation*}
    Moreover,  $\zeta_{U,L}(s)$ is holomorphic for
    $$\Re(s)>\max\left\{A_L-\frac{1}{\gamma},B_L-\frac{1}{\gamma a_r+\xi}\right\},s\not \in \left\{w+\frac{2\pi im}{\log(q)}:w\in  \{a(L)\}\cup \mathcal{A}_L\cup\mathcal{B}_L, m\in \ZZ\right\},$$
    and satisfies~$\lim_{s\to w}(s-w)^2\zeta_{U,L}(s)=0$ for every~$w\in \mathcal{A}_L\cup\mathcal{B}_L$.
\item If~$A_L<B_L$, then $\zeta_{U,L}(s)$ is holomorphic for
    $$\Re(s)>\max\left\{A_L,B_L-\frac{1}{\gamma a_r+\xi}\right\},s\not \in \left\{w+\frac{2\pi im}{\log(q)}:w\in  \{a(L)\}\cup \mathcal{B}_L, m\in \ZZ\right\},$$
and for every~$w\in \{a(L)\}\cup \mathcal{B}_L$ it satisfies
 \begin{equation*}
        \begin{split}
\lim_{s\to w}(s-w)\zeta_{U,L}(s)
&= \left(q^{d(1-g)}\zeta_K(\gamma w-r)+ q^{(d+1-N_X)(1-g)}P_2(q^{-w})\right) \\
& \hspace{2cm} \times \frac{\textup{Res}_{s=1}\zeta_K(s)}{\zeta_K\left(t  \right)\zeta_K(\gamma w)(\gamma a_r+\xi)}\\
&= \left(q^{d(1-g)}\zeta_K(\gamma w-r)+ q^{(d+1-N_X)(1-g)}P_2(q^{-w})\right)\\
&\hspace{2cm} \times\frac{h_Kq^{1-g}}{\zeta_K\left(t  \right)\zeta_K(\gamma w)(\gamma a_r+\xi)(q-1)\log(q)}.
\end{split}
\end{equation*}
Moreover, from the definition of~$P_2(T)$ and~\eqref{eq identity Rk} we have
\begin{equation*}
\begin{split}
    & q^{d(1-g)}\zeta_K(\gamma w-r)+ q^{(d+1-N_X)(1-g)}P_2(q^{-w})\\
    =&
     q^{(d+1-N_X)(1-g)}\sum_{\substack{D\geq 0 \\ \deg(D)\leq 2g-2}}q^{(N_X-1)\ell(D)+(r+1-N_X-\gamma w)\deg(D)}+q^{d(1-g)}\sum_{\substack{D\geq 0 \\ \deg(D)> 2g-2}}q^{-(\gamma w-r)\deg(D)}\\
     =&q^{(d+1-N_X)(1-g)}\R_K(1-N_X,\gamma w-r+N_X-1).
\end{split}
\end{equation*}
In particular, since~$\R_K(1-N_X,\gamma B_L-r+N_X-1)$ is positive, we see that~$\zeta_{U,L}(s)$ has a pole of order~$b(L)=1$ at~$s=a(L)$. 
\item If~$A_L>B_L$, then~$\zeta_{U,L}(s)$ is holomorphic for
    $$\Re(s)>\max\left\{A_L-\frac{1}{\gamma},B_L\right\},s\not \in \left\{w+\frac{2\pi im}{\log(q)}:w\in  \{a(L)\}\cup \mathcal{A}_L, m\in \ZZ\right\},$$
and for every~$w\in \{a(L)\}\cup \mathcal{A}_L$ it satisfies
  \begin{equation*}
        \begin{split}
\lim_{s\to w}(s-w) \zeta_{U,L}(s)&=\left( q^{d(1-g)}\zeta_K\left( \left( \gamma a_r+\xi \right)w-\left( (r+1)a_r-|\mathbf{a}|  +t-1\right)\right)+q^{r(1-g)}P_3(q^{-w})\right)\\
& \hspace{2cm} \times \frac{\textup{Res}_{s=1}\zeta_K(s)}{\zeta_K\left(\left( \gamma a_r+\xi \right)w-\left( (r+1)a_r-|\mathbf{a}|\right)  \right)\zeta_K(r+1)\gamma}\\
&=\left( q^{d(1-g)}\zeta_K\left( \left( \gamma a_r+\xi \right)w-\left( (r+1)a_r-|\mathbf{a}|  +t-1\right)\right)+q^{r(1-g)}P_3(q^{-w})\right)\\
& \hspace{2cm} \times \frac{h_K q^{1-g}}{\zeta_K\left(\left( \gamma a_r+\xi \right)w-\left( (r+1)a_r-|\mathbf{a}|\right)  \right)\zeta_K(r+1)\gamma (q-1)\log(q)}.
        \end{split}
        \end{equation*}
        Using the definition of~$P_3(T)$ and~\eqref{eq identity Rk}, we get
    \begin{equation*}
    \begin{split}
        & q^{d(1-g)}\zeta_K((\gamma a_r+\xi )w-( (r+1)a_r-|\mathbf{a}|  +t-1))+q^{r(1-g)}P_3(q^{-w})\\
        =& q^{r(1-g)}\sum_{\substack{D\geq 0 \\ \deg(D)\leq 2g-2}}q^{\ell(D)(t-1)-\deg(D)((\gamma a_r+\xi )w-((r+1)a_r-|\mathbf{a}|))} \\
        & \hspace{6cm}+q^{d(1-g)}\sum_{\substack{D\geq 0 \\ \deg(D)> 2g-2}}q^{- (( \gamma a_r+\xi)w-((r+1)a_r-|\mathbf{a}|  +t-1))\deg(D)}\\
        =& q^{r(1-g)}\R_K\left(1-t, \left( \gamma a_r+\xi \right)w- ((r+1)a_r-|\mathbf{a}|) \right).
    \end{split}      
    \end{equation*}
    As in the previous case, using that~$\R_K\left(1-t,\left( \gamma a_r+\xi \right)A_L- ((r+1)a_r-|\mathbf{a}|) \right)$ is positive, we conclude that~$\zeta_{U,L}(s)$ has a pole of order~$b(L)=1$ at~$s=a(L)$.
\end{enumerate}
This completes the proof of the Theorem~\ref{thm:ThmGeneralversion}.


\subsection{Counting rational points of large height}\label{sec counting rationa points large height}

In this section we use Theorems~\ref{thm:ThmGeneralversion} and~\ref{thm:ThmGeneral when ar=0} to deduce, via Theorem~\ref{thm:tauberianThmFF}, asymptotic formulas for the number of points of large height on a Hirzebruch--Kleinschmidt variety~$X$, with respect to height functions~$H_L$ associated to~$L\in \Pic(X)$ big. We start by introducing the following useful definition.

\begin{defi}
Given a Hirzebruch--Kleinschmidt variety~$X$ and~$L\in \Pic(X)$ big, we define
$$\eta_L:=\max\{\eta\in \ZZ^+:\eta^{-1}L\in \Pic(X)\}.$$
In other words, $\eta_L$ is the unique positive integer such that~$\eta_L^{-1}L\in \Pic(X)$ and is primitive.
\end{defi}
Note that, if we write~$L=\gamma h+\xi f$ with~$\{h,f\}$ the basis of~$\Pic(X)$ given in Proposition~\ref{prop effective cone}, then~$\eta_L=\mathrm{g.c.d.}(\gamma,\xi)$. Moreover, it follows from~\eqref{eq height function HL} that the height function~$H_L$ takes values in~$q^{\eta_L\ZZ}$, hence the only interesting values of the counting function~$\tilde{N}(q^M,H_L,X)$ occur when~$M$ is divisible by~$\eta_L$.

In order to state the following theorem, 
we also define, for~$L=\gamma h+\xi f\in \Pic(X)$ big, the number
$$a'(L):=\left\{
\begin{array}{ll}
   \max\left\{A_L-\frac{1}{\gamma},B_L-\frac{1}{\gamma a_r+\xi}\right\},  & \text{if }A_L=B_L,  \\
  \max\left\{A_L,B_L-\frac{1}{\gamma a_r+\xi}\right\},  & \text{if }A_L<B_L,  \\ 
  \max\left\{A_L-\frac{1}{\gamma},B_L\right\}, & \text{if }A_L>B_L. 
\end{array}
\right.$$
Moreover, recalling that~$u$ denotes the unit for the convolution product~$\star$ on functions~$\Div^+(K)\to \RR$ (see Definition~\ref{def convolution}), we extend the definitions of~$N^1_m(D)$ and~$\tilde{N}_m(D)$ given in~\eqref{eq def of Nmn(D)} and~\eqref{eq def of tildeNm(D)}, in order to include the case~$m=0$, by putting~$N^1_0(D):=1$ and~$\tilde{N}_0(D):=u(D)$. Then, for every integer~$m\geq 0$ we have that $(N^1_m,\tilde{N}_m)$ is a $\mu$-couple and~$N_m^1(D)=q^{m\ell(D)}$. Finally, for every integer~$m\geq 0$ we define~$F_m:\Div^+(K)\to \RR$ as
$$F_{m}(D):=\prod_{\substack{v\in \Val(K)\\ v(D)>0}}(1-q^{-mf_v}).$$
In particular, $F_0=u$.

The first main theorem of this section gives an asymptotic formula for the number
$$\tilde{N}(U,H_L,q^M):=\#\{P\in U(K):H_L(P)=q^M\},$$
for~$M\in \eta_L\ZZ$ large, $U$ the good open subset of a Hirzebruch--Kleinschmidt variety~$X=X_d(a_1,\ldots,a_r)$ with~$a_r>0$ and~$L\in \Pic(X)$ big. The case~$a_r=0$ is treated later in this section.

\begin{thm}\label{thm:AsymptoticFormulaGeneralversion}
 Let~$X:=X_d(a_1,\ldots,a_r)$ be a Hirzebruch--Kleinschmidt variety over the global function field~$K=\F_q(\mathcal{C})$ with~$a_r>0$, $U:=U_d(a_1,\ldots,a_r)$ the  good open subset of~$X$ and~$L=\gamma h+\xi f\in \Pic(X)$ big. For every integer~$n\in \eta_L\ZZ$ we put~$n_0:=\frac{n}{\eta_L}$. Then, 
 for every~$\delta>a'(L)$ we have the estimate
 $$\tilde{N}(U,H_L,q^M)=Q_L(M)q^{a(L)M}+O\left(q^{\delta M}\right) \quad \text{as }M\to \infty,M\in \eta_L\ZZ,$$
 where~$Q_L(M)$ is given as follows:
 \begin{enumerate}
     \item If~$A_L=B_L$, then~$Q_L(M)=C_{L}M+\tilde{C}_L(M)$ with
     $$C_{L}=\frac{q^{(d+2)(1-g)} \eta_L h_K^2}{\zeta_K(t)\zeta_K(r+1)(\gamma a_r+\xi)\gamma (q-1)^2}>0,$$
     and~$\tilde{C}_L(M)\in \RR$ depending only on~$L$ and on~$M_0$ mod~$\gamma_0(\gamma a_r+\xi)_0$.
     \item If~$A_L<B_L$, then
     $$Q_L(M)=\frac{q^{(d+2-N_X)(1-g)}  h_K}{\zeta_K\left(t  \right)(q-1)}\sum\limits_{\substack{D\geq 0\\ \deg(D)\equiv M_0\gamma_0'\, (\gamma a_r+\xi)_0}}(\tilde{N}_{N_X-1}\star F_{r+1-N_X})(D) q^{-\deg(D)\left(\gamma B_{L}-r+N_X-1\right)},$$
     where~$\gamma_0'$ denotes the multiplicative inverse of~$\gamma_0$ mod~$(\gamma a_r+\xi)_0$ and the sum is over all divisors~$D\geq 0$ satisfying~$\deg(D)\equiv M_0\gamma_0'\, \text{mod }(\gamma a_r+\xi)_0$. In particular, $Q_L(M)>0$ and it depends only on~$L$ and on~$M_0$ mod~$(\gamma a_r+\xi)_0$.
     \item If~$A_L>B_L$, then
     $$Q_L(M)=\frac{ q^{(r+1)(1-g)} h_K}{\zeta_K(r+1)(q-1)}
     \sum_{\substack{D\geq 0\\ \deg(D)\equiv M_0(\gamma a_r+\xi)_0'\, (\gamma_0) }}\tilde{N}_{t-1}(D)q^{-\deg(D)(( \gamma a_r+\xi)A_{L}- ((r+1)a_r-|\mathbf{a}|))},$$
     where~$(\gamma a_r+\xi)_0'$ denotes the multiplicative inverse of~$(\gamma a_r+\xi)_0$ mod~$\gamma_0$ and the sum is over all divisors~$D\geq 0$ satisfying~$\deg(D)\equiv M_0(\gamma a_r+\xi)_0'\, \text{mod }\gamma_0$. In particular, $Q_L(M)>0$ and it depends only on~$L$ and on~$M_0$ mod~$\gamma_0$.
 \end{enumerate}
\end{thm}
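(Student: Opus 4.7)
The plan is to reduce to the primitive case and then apply the Tauberian Theorem~\ref{thm:tauberianThmFF}. Writing $L=\eta_L L^{(0)}$ with $L^{(0)}:=\eta_L^{-1}L$ primitive, the relation $H_L=H_{L^{(0)}}^{\eta_L}$ yields
\[
\tilde{N}(U,H_L,q^M)=\tilde{N}(U,H_{L^{(0)}},q^{M_0})\quad\text{for every }M=\eta_L M_0\in \eta_L\ZZ,
\]
so it suffices to apply the Tauberian theorem to $\zeta_{U,L^{(0)}}(s)$ and then substitute back. Theorem~\ref{thm:ThmGeneralversion} applied to $L^{(0)}$ provides all of the required analytic input: convergence abscissa $a(L^{(0)})=\eta_L a(L)$, pole order $b(L^{(0)})=b(L)$ at $s=a(L^{(0)})$, and a holomorphy region that, after the rescaling $M\mapsto \eta_L M_0$, becomes $\delta>a'(L)$ in the error term.

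In case (1) with $A_L=B_L$, the vanishing condition in Theorem~\ref{thm:ThmGeneralversion}(1) forces the critical-line poles outside $a(L^{(0)})$ to have order at most $1=b(L)-1$, so in the Tauberian expansion only the double pole at $a(L^{(0)})$ contributes to the $M_0$-coefficient. Inserting the limit $r_0$ from Theorem~\ref{thm:ThmGeneralversion}(1) and using $\gamma_0(\gamma_0 a_r+\xi_0)\eta_L^2=\gamma(\gamma a_r+\xi)$ gives the claimed constant $C_L$. The constant-in-$M_0$ contributions of the remaining critical-line poles assemble into $\tilde{C}_L(M)$, whose period in $M_0$ equals $\mathrm{lcm}(\gamma_0,(\gamma a_r+\xi)_0)=\gamma_0(\gamma a_r+\xi)_0$, using $\gcd(\gamma_0,(\gamma a_r+\xi)_0)=\gcd(\gamma_0,\xi_0)=1$.

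In cases (2) and (3) every critical-line pole of $\zeta_{U,L^{(0)}}(s)$ is simple, and the Tauberian formula produces $Q_L(M)=\log(q)\sum_{k=0}^{N-1}e^{2\pi i k M_0/N}r_k$ with $N=(\gamma a_r+\xi)_0$ or $\gamma_0$, respectively. The core calculation is to identify this character sum with the stated divisor sum. For case (2), I would expand the residue $r_k$ from Theorem~\ref{thm:ThmGeneralversion}(2) by writing both $\R_K(1-N_X,\gamma_0 \tilde{s}_k-r+N_X-1)$ and $1/\zeta_K(\gamma_0\tilde{s}_k)$ as Dirichlet series over divisors (using Corollary~\ref{cor inverse of zetaK converges absolutely} for the second factor), and then apply orthogonality of characters modulo $N$ to force the congruence $\deg(D_1)+\deg(D_2)\equiv M_0\gamma_0'\pmod{N}$ on the total degree. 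The resulting sum is then matched with the stated form using the Dirichlet-series identity $\sum_D F_m(D)q^{-s\deg D}=\zeta_K(s)/\zeta_K(s+m)$ (with $m=r+1-N_X$), which recognizes the convolution $\tilde{N}_{N_X-1}\star F_{r+1-N_X}$ from its Dirichlet series $\R_K(1-N_X,s)/\zeta_K(s+r+1-N_X)$. Case (3) is entirely symmetric and uses $\sum_D \tilde{N}_{t-1}(D)q^{-s\deg D}=\R_K(1-t,s)/\zeta_K(s)$ in place of the previous identity. The main technical obstacle is this character-sum-to-convolution identification, and in particular recognizing that the function $F_m$ is the natural object appearing in the answer; positivity of $Q_L(M)$ then follows from the non-negativity of $\tilde{N}_m$ and $F_m$ together with the non-vanishing of the main-pole residue.
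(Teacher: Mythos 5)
Your reduction to the primitive class $L^{(0)}=\eta_L^{-1}L$, application of Theorem~\ref{thm:tauberianThmFF} to $\zeta_{U,L^{(0)}}(s)$ with the analytic input from Theorem~\ref{thm:ThmGeneralversion}, and subsequent rescaling $M=\eta_L M_0$ all match the paper's proof, and the character-sum/Dirichlet-series translation you describe in cases (2) and (3) — expanding each residue $r_k$ via Corollary~\ref{cor inverse of zetaK converges absolutely} and the identity $\sum_D F_m(D)q^{-s\deg D}=\zeta_K(s)/\zeta_K(s+m)$, then applying orthogonality modulo $N$ to obtain a sum over a degree congruence class — is exactly Lemma~\ref{lem positivity of exponential sums}(1) in the paper. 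The period computations for $\tilde{C}_L(M)$ in case (1) are also correct.

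The one place your argument would fail as written is the positivity of $Q_L(M)$ in cases (2) and (3). You claim this ``follows from the non-negativity of $\tilde{N}_m$ and $F_m$ together with the non-vanishing of the main-pole residue,'' but that is not enough: non-negativity of all coefficients plus positivity of the total sum only shows that $Q_L(M)>0$ for at least one residue class $M_0\bmod N$, not for every one. In case (3) the relevant coefficients are $\tilde{N}_{t-1}(D)$ (here $b=0$, so $F_0=u$), and $\tilde{N}_{t-1}(D)$ vanishes for many $D$; to show the restricted sum over $\deg(D)\equiv M_0(\gamma a_r+\xi)_0'\pmod{\gamma_0}$ is nonzero one must produce an effective divisor in that degree class with a strictly positive coefficient. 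The paper's Lemma~\ref{lem positivity of exponential sums}(2) does this via a genuine number-theoretic input: for $b>0$ it uses that places of every sufficiently large degree exist (so that $c_n(a,b)\geq\#\{v:f_v=n\}>0$ eventually), and for $b=0,a<0$ it uses the Riemann--Roch bound $\ell(v)>1$ when $f_v>\max\{g,2g-2\}$ to get $\tilde{N}_{|a|}(v)\geq q^{\ell(v)}-q>0$. Without this refinement your proof of the strict positivity statement in parts (2) and (3) is incomplete.
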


The proof of Theorem~\ref{thm:AsymptoticFormulaGeneralversion} is given below, after the following lemma.

\begin{lem}\label{lem positivity of exponential sums}
    Let~$a,b\in \ZZ$ with~$a\leq 0\leq b$ and~$s\in \CC$ with~$\Re(s)>1+b-a$. Then, the following properties hold:
    \begin{enumerate}
        \item We have
    \begin{equation}\label{eq computation RK divided by zetaK}
      \frac{\R_K(a,s-b)}{\zeta_K(s)}=\sum_{D\geq 0}(\tilde{N}_{|a|}\star F_{b})(D)q^{-(s-b)\deg(D)}.
    \end{equation}
    \item If~$(a,b)\neq (0,0)$ then for every choice of integers~$M,N$ with~$N\geq 1$ we have
    \begin{equation}\label{eq sum over arithmetic progression is positive}
      \sum_{\substack{D\geq 0\\ \deg(D)\equiv M \, (N)}}(\tilde{N}_{|a|}\star F_{b})(D)q^{-(s-b)\deg(D)}>0,  
    \end{equation}
    where the sum is over all divisors~$D\geq 0$ satisfying~$\deg(D)\equiv M$ mod~$N$.
    \end{enumerate}
\end{lem}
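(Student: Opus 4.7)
For part (1) I would compute separately the Dirichlet series of $\tilde{N}_{|a|}$ and of $F_b$, and then multiply. Since $N_{|a|}^1(D)=q^{|a|\ell(D)}$ and $(N_{|a|}^1,\tilde{N}_{|a|})$ forms a $\mu$-couple, the same manipulation as in Lemma~\ref{lem inversion Z_K(T)} gives
$$\sum_{D\geq 0}\tilde{N}_{|a|}(D)q^{-(s-b)\deg(D)}=\frac{\R_K(a,s-b)}{\zeta_K(s-b)}.$$
For $F_b$, its multiplicative structure (the value $F_b(D)$ factors into local contributions indexed by the support of $D$) leads to the Euler product
$$\sum_{D\geq 0}F_b(D)T^{\deg(D)}=\prod_{v\in\Val(K)}\left(1+(1-q^{-bf_v})\frac{T^{f_v}}{1-T^{f_v}}\right)=\prod_{v\in\Val(K)}\frac{1-q^{-bf_v}T^{f_v}}{1-T^{f_v}}=\frac{\Zet_K(T)}{\Zet_K(q^{-b}T)}.$$
Evaluating at $T=q^{-(s-b)}$ gives $\zeta_K(s-b)/\zeta_K(s)$, and since the Dirichlet series of a convolution is the product of Dirichlet series, multiplying the two identities proves \eqref{eq computation RK divided by zetaK}.

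For part (2) I would split on the sign of $b$. If $b\geq 1$, then every local factor $1-q^{-bf_v}$ lies in $(0,1)$, so $F_b(D)>0$ for every $D\geq 0$; combined with $(\tilde{N}_{|a|}\star F_b)(D)\geq \tilde{N}_{|a|}(0)F_b(D)=q^{|a|}F_b(D)$, this makes each term in the restricted sum strictly positive. The sum is therefore positive as soon as the arithmetic progression contains at least one degree of an effective divisor, which is immediate from Riemann--Roch since every integer $\geq g$ is such a degree (because $\ell(D_0)\geq 1$ whenever $\deg(D_0)\geq g$).

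The remaining case $b=0$ forces $a\leq -1$, and then $F_0=u$ collapses the convolution to $\tilde{N}_{|a|}$, so one needs to exhibit a single $D$ with $\deg(D)\equiv M\pmod{N}$ and $\tilde{N}_{|a|}(D)>0$. Since $\tilde{N}_{|a|}(D)\geq \tilde{N}_1(D)$ (setting all but one coordinate to zero), the problem reduces to $\tilde{N}_1$. A M\"obius inversion applied to the $\mu$-couple identity $N_1^1(D)=q^{\ell(D)}$, combined with Riemann--Roch, yields the explicit formula
$$\tilde{N}_1(D)=q^{\deg(D)+1-g}\prod_{v:v(D)>0}(1-q^{-f_v})>0$$
whenever $\deg(D)-\sum_{v:v(D)>0}f_v\geq 2g-1$. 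By F.K.~Schmidt's theorem, the degrees $\{f_v\}_{v\in\Val(K)}$ have greatest common divisor one, so one can fix a finite set of places $v_1,\ldots,v_k$ whose degrees already generate $\ZZ$, and a Sylvester--Frobenius argument then provides integers $n_1,\ldots,n_k\geq 1$ with $\sum_i n_if_{v_i}\equiv M\pmod{N}$ and each $n_i$ arbitrarily large. Taking $D=\sum_i n_iv_i$ meets the support bound and yields $\tilde{N}_1(D)>0$. I expect the main obstacle to be this last step: ensuring simultaneously the arithmetic congruence on $\deg(D)$ and the lower bound on $\deg(D)-\sum_{v:v(D)>0}f_v$ needed for the M\"obius formula to give strict positivity.
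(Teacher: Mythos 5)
Your part (1) is essentially the paper's proof: factor the Dirichlet series of $N^1_{|a|}=1\star\tilde{N}_{|a|}$ to get $\R_K(a,s-b)/\zeta_K(s-b)$, and compute the Euler product of $\sum_D F_b(D)T^{\deg D}$ to get $\zeta_K(s-b)/\zeta_K(s)$; the product identity for convolution then gives \eqref{eq computation RK divided by zetaK}.

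Part (2) is correct but takes a genuinely different route from the paper. The paper rewrites the restricted sum as $\sum_{n\equiv M\,(N)} c_n(a,b)q^{-ns}$ with $c_n(a,b):=q^{nb}\sum_{\deg D=n}(\tilde{N}_{|a|}\star F_b)(D)\geq 0$, and handles both cases at once by appealing to the function-field prime number theorem (Rosen, Thm.~5.12): for all sufficiently large $n$ there is a place of degree $n$, and for $b>0$ one shows $c_n(a,b)\geq\#\{v:f_v=n\}$ (via $q^{nb}F_b(D)\geq 1$ for $\deg D\leq n$), while for $b=0$ one uses $\tilde{N}_1(v)=q^{\ell(v)}-q>0$ for $f_v$ large. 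You instead split: for $b\geq 1$ you observe every term is positive and only need one effective divisor of degree in the progression, which follows from Riemann--Roch (any $n\geq g$ works); for $b=0$, $a\leq -1$ you reduce to $\tilde{N}_1$, derive the explicit formula $\tilde{N}_1(D)=q^{\deg D+1-g}\prod_{v\in\operatorname{supp}(D)}(1-q^{-f_v})$ valid when $\deg(D)-\sum_{\operatorname{supp}(D)}f_v\geq 2g-1$ (this formula and its validity condition check out), and then construct $D=\sum n_iv_i$ with the right degree residue and large $n_i$ via F.K.~Schmidt plus translating a Bezout solution by multiples of $N$. Both routes work and reach the same conclusion; the paper's is more uniform across the two cases, while yours is more elementary in the $b\geq1$ case and more explicit (but also more elaborate) in the $b=0$ case, avoiding the prime number theorem entirely. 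One small cosmetic remark that applies equally to both proofs: the positivity claim \eqref{eq sum over arithmetic progression is positive} only makes literal sense for real $s$, which is how it is used downstream.
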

\begin{proof}
By definition of~$\R_K$ and Corollary~\ref{cor inverse of zetaK converges absolutely}, we have\footnote{The hypothesis~$\Re(s)>1+b-a$ ensures that all the series appearing in this proof are absolutely convergent.}
\begin{eqnarray*}
\R_K(a,s-b)
&=& \sum_{D\geq 0}q^{-a\ell(D)}q^{-(s-b)\deg(D)}\\
&=& \left(\sum_{D\geq 0}N^1_{|a|}(D)q^{-(s-b)\deg(D)}\right)\left(\sum_{D\geq 0}\mu(D)q^{-(s-b)\deg(D)}\right)\zeta_K(s-b).
\end{eqnarray*}
This implies
\begin{equation}\label{eq computation RK over zetaK 1}
\frac{\R_K(a,s-b)}{\zeta_K(s)}=\left(\sum_{D\geq 0}\tilde{N}_{|a|}(D)q^{-(s-b)\deg(D)}\right)\frac{\zeta_K(s-b)}{\zeta_K(s)}.
\end{equation}
Now, using the Euler product expansion of~$\zeta_K(s)$, we have
$$\frac{\zeta_K(s-b)}{\zeta_K(s)}=\prod_{v\in \Val(K)}\frac{1-q^{-sf_v}}{1-q^{-(s-b)f_v}}=\prod_{v\in \Val(K)}\left(1+(1-q^{-bf_v})\sum_{n=1}^{\infty}q^{-(s-b)f_vn}\right).$$
This implies
\begin{equation}\label{eq computation zetaKs-b over zetaKs}
\frac{\zeta_K(s-b)}{\zeta_K(s)}=\sum_{D\geq 0}F_b(D)q^{-(s-b)\deg(D)}.    
\end{equation}
Then, identity~\eqref{eq computation RK divided by zetaK} follows from~\eqref{eq computation RK over zetaK 1} together with~\eqref{eq computation zetaKs-b over zetaKs}.
Now, we rewrite the sum in~$\eqref{eq sum over arithmetic progression is positive}$ as
$$\sum_{\substack{n\geq 0 \\ n\equiv M (N)}}c_n(a,b)q^{-ns}, \text{ where } c_n(a,b):=q^{nb}\sum_{\substack{D\geq 0 \\ \deg(D)=n}}(\tilde{N}_{|a|}\star F_b)(D).$$
By definition of~$\tilde{N}_{|a|}$ and~$F_b$, we have~$\tilde{N}_{|a|}(D), F_b(D)\geq 0$ for all~$D\in \Pic^+(K)$. This shows that~$c_n(a,b)\geq 0$. Now, assume~$b>0$. Then~$q^{nb}F_b(D)\geq 1$ for all~$D\in \Pic^+(K)$ with~$\deg(D)\leq n$, hence using that~$\tilde{N}_{|a|}(0)\geq 1$ we get
$$c_n(a,b)\geq q^{nb}\sum_{\substack{D\geq 0 \\ \deg(D)=n}}F_b(D)\geq \#\{D\in \Div^+(K):\deg(D)=n\}\geq \#\{v\in \Val(K):f_v=n\}.$$
Similarly, if~$b=0$ and~$a<0$ then, recalling that~$F_0=u$ and~$\tilde{N}_{1}=N_1^1\star \mu$, we get
$$c_n(a,b)=\sum_{\substack{D\geq 0 \\ \deg(D)=n}}\tilde{N}_{|a|}(D)\geq \sum_{\substack{D\geq 0 \\ \deg(D)=n}}\tilde{N}_{1}(D) \geq \sum_{\substack{v\in \Val(K) \\ f_v=n}}\tilde{N}_{1}(v)\geq  \sum_{\substack{v\in \Val(K) \\ f_v=n}}(q^{\ell(v)}-q).$$
Then, since the number of valuations~$v\in \Val(K)$ with~$f_v=n$ is positive for large enough~$n$ depending only on~$K$ (see, e.g., \cite[Theorem~5.12]{Ros02}), and~$\ell(v)>1$ if~$f_v>\max\{g,2g-2\}$ by the Riemann--Roch theorem, we conclude that~$c_n(a,b)>0$ for large enough~$n$ provided~$(a,b)\neq (0,0)$. This completes the proof of the lemma.
\end{proof}

\begin{remark}
When~$(a,b)=(0,0)$ we have~$(\tilde{N}_{|a|}\star F_{b})(D)=u(D)$ and~\eqref{eq sum over arithmetic progression is positive} holds only when~$M\equiv 0$ mod~$N$.
\end{remark}

\begin{proof}[Proof of Theorem~\ref{thm:AsymptoticFormulaGeneralversion}]
Put~$L_0:=\eta_L^{-1}L$ and let~$M\in \eta_L\ZZ$. Then, we have~$\tilde{N}(U,H_L,q^M)=\tilde{N}(U,H_{L_0},q^{M_0})$. Now, applying Theorems~\ref{thm:ThmGeneralversion} and~\ref{thm:tauberianThmFF} to~$\zeta_{U,H_{L_0}}(s)$ we get that, for every~$\delta_0\in \, ]a'(L_0),a(L_0)[$ we have the estimate
$$\tilde{N}(U,H_{L_0},q^{M_0})=\log(q)^{b(L_0)}Q_0(M_0)q^{a(L_0)M_0}+O\left(q^{\delta_0 M_0}\right) \quad \text{as }M_0\to \infty,M_0\in \ZZ,$$
where~$Q_{0}(M_0)$ is as follows:
\begin{enumerate}
    \item If~$A_{L_0}=B_{L_0}$, then~$Q_0(M_0)=C_1M_0+C_2(M_0)$ with
    $$C_1=\lim_{s\to 2}(s-2)^2\zeta_{U,H_{L_0}}(s)=\frac{q^{(d+2)(1-g)}  h_K^2}{\zeta_K(t)\zeta_K(r+1)(\gamma a_r+\xi)_0\gamma_0 (q-1)^2\log(q)^2},$$
    and~$C_2(M_0)\in \RR$ depending on~$M_0$ mod~$\gamma_0(\gamma a_r+\xi)_0$.
    \item If~$A_{L_0}<B_{L_0}$, then
     $$Q_0(M_0)=\frac{q^{(d+2-N_X)(1-g)} h_K}{\zeta_K\left(t  \right)(\gamma a_r+\xi)_0(q-1)\log(q)}\sum_{k=0}^{(\gamma a_r+\xi)_0-1} e^{\frac{2\pi i M_0 k}{(\gamma a_r+\xi)_0}} \frac{\R_K\left(1-N_X,\gamma_0 w_k-r+N_X-1\right)}{\zeta_K\left(\gamma_0 w_k\right)}$$
     where~$w_k:=B_{L_0}+\frac{2\pi i k}{(\gamma a_r+\xi)_0\log(q)}$. Moreover, using Lemma~\ref{lem positivity of exponential sums}(1) we have
     \begin{eqnarray*}
      \frac{\R_K\left(1-N_X,\gamma_0 w_k-r+N_X-1\right)}{\zeta_K\left(\gamma_0 w_k\right)}
      &=&\sum_{D\geq 0}(\tilde{N}_{N_X-1}\star F_{r+1-N_X})(D)e^{-\deg(D)\frac{2\pi i \gamma_0 k}{(\gamma a_r+\xi)_0}}\\
      & & \hspace{1cm} \times q^{-\deg(D)\left(\gamma_0B_{L_0}-r+N_X-1\right)}.   
     \end{eqnarray*}
     This implies
     $$Q_0(M_0)=\frac{q^{(d+2-N_X)(1-g)} h_K}{\zeta_K\left(t  \right)(q-1)\log(q)}\sum_{\substack{D\geq 0\\ \deg(D)\equiv M_0\gamma_0'\, ((\gamma a_r+\xi)_0)}}(\tilde{N}_{N_X-1}\star F_{r+1-N_X})(D) q^{-\deg(D)\left(\gamma_0B_{L_0}-r+N_X-1\right)},$$
     and this value is positive by Lemma~\ref{lem positivity of exponential sums}(2).
     \item If~$A_{L_0}>B_{L_0}$, then
     $$Q_0(M_0)=\frac{ q^{(r+1)(1-g)} h_K}{\zeta_K(r+1)\gamma_0 (q-1)\log(q)}
     \sum_{k=0}^{\gamma_0-1} e^{\frac{2\pi i M_0 k}{\gamma_0}} \frac{\R_K\left(1-t, ( \gamma a_r+\xi)_0w_k- ((r+1)a_r-|\mathbf{a}|) \right)}{\zeta_K((\gamma a_r+\xi )_0w_k-((r+1)a_r-|\mathbf{a}|))}$$
     where~$w_k:=A_{L_0}+\frac{2\pi i k}{ \gamma_0 \log(q)}$ in this case. As in the previous case, we use Lemma~\ref{lem positivity of exponential sums}(1) to write
     \begin{eqnarray*}
     \frac{\R_K\left(1-t, ( \gamma a_r+\xi)_0w_k- ((r+1)a_r-|\mathbf{a}|) \right)}{\zeta_K((\gamma a_r+\xi )_0w_k-((r+1)a_r-|\mathbf{a}|))}&=&\sum_{D\geq 0}\tilde{N}_{t-1}(D)e^{-\deg(D)\frac{2\pi i ( \gamma a_r+\xi)_0 k}{\gamma_0}}\\
      & & \hspace{1cm} \times q^{-\deg(D)(( \gamma a_r+\xi)_0A_{L_0}- ((r+1)a_r-|\mathbf{a}|))}
     \end{eqnarray*}
     and get
     $$Q_0(M_0)=\frac{ q^{(r+1)(1-g)} h_K}{\zeta_K(r+1)(q-1)\log(q)}
     \sum_{\substack{D\geq 0\\\deg(D)\equiv M_0(\gamma a_r+\xi)_0'\, (\gamma_0) }}\tilde{N}_{t-1}(D)q^{-\deg(D)(( \gamma a_r+\xi)_0A_{L_0}- ((r+1)a_r-|\mathbf{a}|))}.$$
     By Lemma~\ref{lem positivity of exponential sums}(2) this value is positive.     
\end{enumerate}
Then, the desired results follow from these computations and the fact that~$A_{L_0}=\eta_L A_L,B_{L_0}=\eta_L B_L, b(L_0)=b(L),a(L_0)=\eta_L a(L), a'(L_0)=\eta_L a'(L)$ and putting 
$Q_L(M):=\log(q)^{b(L)}Q_0(M_0)$. This completes the proof of the theorem.
\end{proof}

We now treat the easier case where~$a_r=0$. In this setting, for~$L=\gamma h+\xi f\in \Pic(X)$ big, we define
$$a''(L):=\left\{
\begin{array}{ll}
   \max\left\{\frac{1}{2\gamma},\frac{1}{2\xi}\right\},  & \text{if }A_L=B_L,  \\
  \max\left\{A_L,\frac{1}{2\xi}\right\},  & \text{if }A_L<B_L,  \\ 
  \max\left\{\frac{1}{2\gamma},B_L\right\}, & \text{if }A_L>B_L. 
\end{array}
\right.$$
Also, recall that~$\tilde{N}(\PP^n,q^d):=\{P\in \PP^n(K):H_{\PP^n}(P)=q^d\}$ (see Section~\ref{sec height zeta function of projective spaces}).

 \begin{thm}\label{thm:AsymptoticFormulaGeneralversion when ar=0}
  Let~$X\simeq \PP^r\times \PP^{t-1}$ be a Hirzebruch--Kleinschmidt variety over the global function field~$K=\F_q(\mathcal{C})$ with~$a_r=0$ and let~$L=\gamma h+\xi f\in \Pic(X)$ big. Then, for every~$\delta\in \,]a''(L),a(L)[$ we have the estimate
 $$\tilde{N}(X,H_L,q^M)=Q_L(M)q^{a(L)M}+O\left(q^{\delta M}\right) \quad \text{as }M\to \infty,M\in \eta_L\ZZ,$$
 where, using the same notation as in Theorem~\ref{thm:AsymptoticFormulaGeneralversion}, the function~$Q_L(M)$ is given as follows:
 \begin{enumerate}
     \item If~$A_L=B_L$, then~$Q_L(M)=C_LM+\tilde{C}_L(M)$ with
     $$C_{L}=\frac{q^{(d+2)(1-g)} \eta_L h_K^2}{\zeta_K(t)\zeta_K(r+1)\xi\gamma (q-1)^2},$$
     and~$\tilde{C}_L(M)\in \RR$ depending only on~$L$ and on~$M_0$ mod~$\gamma_0 \xi_0$.
     \item If~$A_L<B_L$, then
     $$Q_L(M)=\frac{q^{t(1-g)} h_K}{\zeta_K\left(t  \right)(q-1)}\sum\limits_{\substack{d\geq 0\\ d\equiv M_0\gamma_0'\, (\xi_0)}}\tilde{N}(\PP^r,q^d) q^{-d\gamma B_L}.$$
     In particular, $Q_L(M)>0$ and it depends only on~$L$ and on~$M_0$ mod~$\xi_0$.
     \item If~$A_L>B_L$, then
     $$Q_L(M)=\frac{ q^{(r+1)(1-g)}h_K}{\zeta_K(r+1)(q-1)}
     \sum\limits_{\substack{d\geq 0\\ d\equiv M_0\xi_0'\, (\gamma_0)}}\tilde{N}(\PP^{t-1},q^d) q^{-d\xi A_L}.$$
     In particular, $Q_L(M)>0$ and it depends only on~$L$ and on~$M_0$ mod~$\gamma_0$.
 \end{enumerate}
\end{thm}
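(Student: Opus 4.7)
The plan is to mimic the proof of Theorem~\ref{thm:AsymptoticFormulaGeneralversion}, but using the simpler product-structure description of the zeta function in Theorem~\ref{thm:ThmGeneral when ar=0} in place of the more delicate analysis needed when $a_r>0$. The first step is to reduce to the primitive case: setting $L_0:=\eta_L^{-1}L$, we have $H_L=H_{L_0}^{\eta_L}$, hence $\tilde{N}(X,H_L,q^M)=\tilde{N}(X,H_{L_0},q^{M_0})$ for $M\in \eta_L\ZZ$ with $M_0=M/\eta_L$, and moreover $a(L_0)=\eta_L a(L)$, $b(L_0)=b(L)$, and $a''(L_0)=\eta_L a''(L)$. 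Since $L_0$ is primitive we have $\gcd(\gamma_0,\xi_0)=1$, so the sets $\mathcal{A}_{L_0}$ and $\mathcal{B}_{L_0}$ are disjoint modulo $(2\pi i/\log q)\ZZ$, and all relevant poles fit into an arithmetic progression with denominator $N\in \{\gamma_0,\xi_0,\gamma_0\xi_0\}$ depending on the case.

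In case~(1), $A_{L_0}=B_{L_0}$ and by Theorem~\ref{thm:ThmGeneral when ar=0}(1) the function $\zeta_{X,L_0}(s)$ has a double pole at $s=a(L_0)$, vanishing double residues at the finitely many points of $\mathcal{A}_{L_0}\cup\mathcal{B}_{L_0}$, and holomorphic continuation past $a''(L_0)$. Applying Theorem~\ref{thm:tauberianThmFF} with $b=2$ and $N=\gamma_0\xi_0$ yields
\[
\tilde{N}(X,H_{L_0},q^{M_0})=\log(q)^{2}\bigl(C_1'M_0+C_0'(M_0)\bigr)q^{a(L_0)M_0}+O(q^{\delta_0 M_0}),
\]
with $C_1'$ equal to the double residue computed in Theorem~\ref{thm:ThmGeneral when ar=0}(1) and $C_0'(M_0)\in\RR$ depending only on $M_0\bmod \gamma_0\xi_0$. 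Translating back via $M_0=M/\eta_L$ and using the relation $\gamma_0\xi_0\eta_L^{2}=\gamma\xi$ yields the stated constant $C_L=\eta_L\log(q)^2 C_1'$, which is positive.

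In cases~(2) and~(3) we argue symmetrically with $b=1$ and $N=\xi_0$ or $N=\gamma_0$ respectively. Focusing on case~(2), the Tauberian theorem together with Theorem~\ref{thm:ThmGeneral when ar=0}(2) gives as leading coefficient of the asymptotic
\[
\log(q)\sum_{k=0}^{\xi_0-1}e^{2\pi i k M_0/\xi_0}\cdot\frac{\zeta_{\PP^r}(\gamma_0 w_k)\,h_Kq^{t(1-g)}}{\zeta_K(t)\xi_0(q-1)\log q},\qquad w_k:=B_{L_0}+\frac{2\pi i k}{\xi_0\log q}.
\]
Expanding $\zeta_{\PP^r}(\gamma_0 s)=\sum_{d\geq 0}\tilde{N}(\PP^r,q^d)q^{-\gamma_0 sd}$ (absolutely convergent at each $w_k$), exchanging the finite sum over $k$ with the sum over $d$, and applying the orthogonality relation
\[
\frac{1}{\xi_0}\sum_{k=0}^{\xi_0-1}e^{2\pi i k(M_0-\gamma_0 d)/\xi_0}=\begin{cases}1&\text{if }d\equiv M_0\gamma_0'\pmod{\xi_0},\\0&\text{otherwise},\end{cases}
\]
collapses the expression to the stated sum; the identity $\gamma_0 B_{L_0}=\gamma B_L$ and the positivity of every $\tilde{N}(\PP^r,q^d)$ conclude the case. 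Case~(3) is identical after exchanging the roles of $(\PP^r,\gamma_0)$ and $(\PP^{t-1},\xi_0)$.

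The only nontrivial point is the bookkeeping in cases~(2) and~(3): one must choose $N$ correctly so that all poles on $\Re(s)=a(L_0)$ lie in the arithmetic progression required by Theorem~\ref{thm:tauberianThmFF}, and then use orthogonality of additive characters modulo $N$ to rewrite the exponential sum as a sum over a residue class. No further geometry is needed, since the entire analysis rides on the factorization $\zeta_{X,L_0}(s)=\zeta_{\PP^r}(\gamma_0 s)\zeta_{\PP^{t-1}}(\xi_0 s)$ already established in the proof of Theorem~\ref{thm:ThmGeneral when ar=0}.
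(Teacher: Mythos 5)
Your approach matches the paper's: reduce to the primitive class $L_0=\eta_L^{-1}L$, feed the analytic data from Theorem~\ref{thm:ThmGeneral when ar=0} into the Tauberian Theorem~\ref{thm:tauberianThmFF} with $N=\gamma_0\xi_0$, $\xi_0$, or $\gamma_0$ according to the case, and in cases (2) and (3) unfold the finite exponential sum over residues against the Dirichlet series $\zeta_{\PP^n}(s)=\sum_{d\geq 0}\tilde N(\PP^n,q^d)q^{-ds}$ by orthogonality of additive characters. This is precisely the structure of the paper's proof of Theorem~\ref{thm:AsymptoticFormulaGeneralversion}, to which the paper appeals for this statement without repeating the computation; your orthogonality step and the identity $\gamma_0 B_{L_0}=\gamma B_L$ are correct, as is the appeal to Wan's estimate \eqref{eq Wan's estimate} for the positivity of $Q_L(M)$.

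There is one computational slip in case~(1). From $Q_L(M)=\log(q)^2\,Q_0(M_0)=\log(q)^2(C_1'M_0+C_0'(M_0))$ with $M_0=M/\eta_L$, the coefficient of $M$ is $C_L=\log(q)^2 C_1'/\eta_L$, not $\eta_L\log(q)^2 C_1'$. Since $C_1'=q^{(d+2)(1-g)}h_K^2/(\zeta_K(t)\zeta_K(r+1)\gamma_0\xi_0(q-1)^2\log(q)^2)$ and $\eta_L\gamma_0\xi_0=\gamma\xi/\eta_L$, the corrected formula yields exactly the stated $C_L=\eta_L q^{(d+2)(1-g)}h_K^2/(\zeta_K(t)\zeta_K(r+1)\gamma\xi(q-1)^2)$, whereas your version would carry a spurious factor of $\eta_L^2$.
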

\begin{proof}
    The result follows from Theorems~\ref{thm:ThmGeneral when ar=0} and~\ref{thm:tauberianThmFF}, and is analogous to the proof of Theorem~\ref{thm:AsymptoticFormulaGeneralversion}. Hence, we omit the details for brevity. Here, we only mention that, in the case~$A_L\neq B_L$, the non-vanishing of~$Q_L(M)$ follows from the fact that~$\tilde{N}(\PP^{n},q^d)>0$ for large~$d\in \ZZ$ depending only on~$n$ and~$K$, because of~\eqref{eq Wan's estimate}. This proves the theorem.
\end{proof}

\subsection{Decomposition of Hirzebruch--Kleinschmidt varieties}\label{sec decomposition of HK}

In this section we explain in detail the decomposition~\eqref{eq decomp HK intro} of Hirzebruch--Kleinschmidt varieties mentioned in the introduction.

First, note that for~$1\leq r'\leq r$ and~$2\leq t'\leq t$, there is a natural embedding 
\begin{equation}\label{eq embedding smaller HK in X}
 X_{t'+r'-1}(a_1,\ldots,a_{r'})\hookrightarrow X_{d}(a_1,\ldots,a_{r})   
\end{equation}
given in homogeneous coordinates by
$$([x_0: (x_{ij})_{i\in I_{t'},j\in I_{r'}}],[y_1:\ldots:y_{t'}])\mapsto ([x_0: (\tilde{x}_{ij})_{i\in I_t,j\in I_r}],[y_1:\ldots:y_{t'}:0:\ldots:0]),$$
where
$$\tilde{x}_{ij}:=\left\{
\begin{array}{ll}
x_{1j}     & \textup{if }j\leq r', a_j=0, i\leq t,  \\
x_{ij} &  \textup{if }j\leq r',a_j>0,i\leq r', \\
0     & \textup{otherwise}. 
\end{array}
\right.$$

We also consider the natural embeddings
\begin{equation}\label{eq embedding P(r') in X}
\PP^{r'} \hookrightarrow X_{d}(a_1,\ldots,a_r), \quad [z_0:\ldots:z_{r'}]\mapsto ([z_0:(\tilde{z}_{ij})_{i\in I_t,j\in I_r}], [1:0:\ldots:0]),
\end{equation}
for~$1\leq r'\leq r$, where
$$\tilde{z}_{ij}:=\left\{\begin{array}{ll}
  z_j   & \textup{if } j\leq r', a_j=0, i\leq r, \textup{ or } j\leq r', a_j>0, i=1, \\
  0  & \textup{otherwise},
\end{array}\right.$$
and
\begin{equation}\label{eq embedding P(t-1) in X}
  \PP^{t-1}\hookrightarrow X_{d}(a_1,\ldots,a_r), \quad [y_1:\ldots:y_t]\mapsto ([1:(0)_{i\in I_t,j\in I_r}], [y_1:\ldots:y_t]).
\end{equation}
Note that the images of~$\PP^{t-1}$ and~$\PP^{r}$ inside~$X_{d}(a_1,\ldots,a_r)$ under these embeddings intersect exactly at the point~$P_0:=([1:(0)_{i\in I_t,j\in I_r}], [1:0:\ldots:0])$.

For~$1\leq r'\leq r$ and~$2\leq t'\leq t$, let us identify~$ U_{t'+r'-1}(a_1,\ldots,a_{r'})$ and~$X_{t'+r'-1}(a_1,\ldots,a_{r'})$ with their images under~\eqref{eq embedding smaller HK in X}. Similarly, let us identify~$\PP^{r'}$ and~$\PP^{t-1}$ with their images under~\eqref{eq embedding P(r') in X} and~\eqref{eq embedding P(t-1) in X}, which we denote by~$\PP^{r'}_1$ and~$\PP^{t-1}_2$, respectively.

\begin{lem}\label{lem decomposition HK over FF}
Let~$X_d(a_1,\ldots,a_r)$ be a Hirzebruch--Kleinschmidt variety with~$a_r>0$. If $r\geq 2$, then we have the disjoint union decomposition
\begin{equation}\label{eq decomposition2 HK}
 X_d(a_1,\ldots,a_r)=X_{d-1}(a_1,\ldots,a_{r-1})\sqcup (\PP^r_1\setminus\PP^{r-1}_1 ) \sqcup \bigg(\bigsqcup_{2\leq t'\leq t}U_{t'+r-1}(a_1,\ldots,a_{r})\bigg).
\end{equation}
If $r=1$, then we have the  disjoint union decomposition
\begin{equation}\label{eq decomposition2 HK r=1}
     X_d(a_1)=\PP^{t-1}_2\sqcup  (\PP^1_1\setminus \{P_0\} ) \sqcup  \bigg(\bigsqcup_{2\leq t'\leq t}U_{t'}(a_1)\bigg),
\end{equation}
where~$P_0=([1:(0)_{i\in I_t,j\in I_r}], [1:0:\ldots:0])$.
\end{lem}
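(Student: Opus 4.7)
The plan is to stratify $X_d(a_1,\ldots,a_r)$ using two discrete invariants of a point $P = ([x_0:(x_{ij})],[y_1:\ldots:y_t])$: the integer $t' := \max\{i \in I_t : y_i \neq 0\}$, and the vanishing or non-vanishing of $x_{t',r}$.

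The key technical input comes from the defining equations~\eqref{equations HK}. Since $a_r > 0$, for every $j \in I_r$ with $a_j > 0$, choosing $n = t'$ in $x_{m,j}\, y_n^{a_j} = x_{n,j}\, y_m^{a_j}$ and letting $m > t'$ (so $y_m = 0$) yields $x_{m,j}\, y_{t'}^{a_j} = 0$, hence $x_{m,j} = 0$ for all $m > t'$. If moreover $x_{t',r} = 0$, then setting $m = t'$ in the same relation for $j = r$ forces $x_{n,r}\, y_{t'}^{a_r} = 0$ for every $n \in I_t$, so the entire $r$-th column of $(x_{ij})$ vanishes. These two observations are the core of the argument.

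For $r \geq 2$, I would distinguish four mutually exclusive cases: (i) $t' \geq 2$ and $x_{t',r} \neq 0$, in which case the coordinate description of $P$ matches the recipe of the embedding~\eqref{eq embedding smaller HK in X} with parameters $(t', r' = r)$, placing $P$ inside $U_{t'+r-1}(a_1,\ldots,a_r)$; (ii) $t' \geq 2$ and $x_{t',r} = 0$, where the vanishing of the whole $r$-th column places $P$ in the embedded $X_{d-1}(a_1,\ldots,a_{r-1})$; (iii) $t' = 1$ and $x_{1,r} \neq 0$, which puts $P$ into $\PP^r_1$ via~\eqref{eq embedding P(r') in X} but outside $\PP^{r-1}_1$; and (iv) $t' = 1$ and $x_{1,r} = 0$, which by the column-vanishing observation again lands $P$ inside $X_{d-1}(a_1,\ldots,a_{r-1})$. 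The decomposition for $r = 1$ follows the same logic, except that the locus where the unique column vanishes is not a copy of a lower HK variety but consists of points of the form $([x_0:0:\ldots:0],[y_1:\ldots:y_t])$, which is precisely the embedded $\PP^{t-1}_2$ of~\eqref{eq embedding P(t-1) in X}; and since $\PP^0_1 = \{P_0\}$, the subcase $t' = 1$ with $x_{1,1} \neq 0$ yields $\PP^1_1 \setminus \{P_0\}$.

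Disjointness of the union is immediate from the case distinction on $t'$ and on the vanishing of $x_{t',r}$. The main (though essentially bookkeeping) obstacle is to verify that in each case the coordinate form of $P$ extracted from the equations~\eqref{equations HK} matches exactly the image of the relevant embedding; particular care is needed for the columns indexed by $j$ with $a_j = 0$, for which the equations force all entries $x_{1,j}, \ldots, x_{t,j}$ of that column to coincide, a condition that must be squared with the definitions of $\tilde{x}_{ij}$ and $\tilde{z}_{ij}$ in~\eqref{eq embedding smaller HK in X} and~\eqref{eq embedding P(r') in X}.
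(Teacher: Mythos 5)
Your proposal is correct and follows essentially the same approach as the paper's proof. The only cosmetic difference is that you take the last nonvanishing $y$-coordinate as the primary invariant $t'$ and then split on the vanishing of $x_{t'r}$, whereas the paper first splits on whether the entire $r$-th column of $(x_{ij})$ vanishes and then defines $t'$ as the largest $i$ with $x_{ir}\neq 0$ (deducing from the equations that this coincides with your $y$-based index); the key manipulations of the defining equations~\eqref{equations HK} and the resulting strata are identical.
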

\begin{proof}
Assume $r\geq 2$ and let~$x=([x_0: (x_{ij})_{i\in I_{t},j\in I_{r}}],[y_1:\ldots:y_{t}])\in X_d(a_1,\ldots,a_r)$. If~$x_{ir}=0$ for all~$i\in I_t$, then~$x\in X_{d-1}(a_1,\ldots,a_{r-1})$. Assume this is not the case, and let~$t':=\max\{i\in I_t:x_{ir}\neq 0\}$. From equation~\eqref{equations HK} with~$j=r$ and~$m=t'$ we see that~$y_n=0$ for all~$n>t'$ (because~$a_r>0$). At the same time we see that having~$y_{t'}=0$ would imply~$y_n=0$ also for all~$n\neq t'$, which is impossible since~$[y_1:\ldots:y_{t}]\in \PP^{t-1}$. Hence, $y_{t'}\neq 0$. Now, from equation~\eqref{equations HK} with~$n=t'$ and~$m=i>t'$ we see that
$$x_{ij}=0 \text{ for all }i\in I_t \text{ with }i>t' \text{ and all }j\in I_r \text{ with }a_j>0.$$
If~$t'\geq 2$, then this shows that~$x\in U_{t'+r-1}(a_1,\ldots,a_{r})$. If~$t'=1$, then we conclude that~$x\in \PP^r_1\setminus \PP^{r-1}_1$. 
This proves that
\begin{equation*}
 X_d(a_1,\ldots,a_r)=X_{d-1}(a_1,\ldots,a_{r-1})\cup  (\PP^r_1\setminus \PP^{r-1}_1) \cup \bigg(\bigcup_{2\leq t'\leq t}U_{t'+r-1}(a_1,\ldots,a_{r})\bigg).
\end{equation*}
Moreover, these unions are disjoint by construction.  This proves~\eqref{eq decomposition2 HK}. The proof of the decomposition~\eqref{eq decomposition2 HK r=1} in the case~$r=1$ is completely analogous. This proves the lemma. 
\end{proof}

\begin{remark}
    In practice, we identify the components~$\PP^r_1\setminus \PP^{r-1}_1$ and~$\PP^1_1\setminus \{P_0\}$ in~\eqref{eq decomposition2 HK} and~\eqref{eq decomposition2 HK r=1} with~$\A^r$ and~$\A^1$, respectively, as done in Example~\ref{ex intro HK threefold} in the introduction.
\end{remark}

In the case~$a_r=0$ we have the following simpler decomposition.

\begin{lem}\label{lem decomposition HK over FF case a_r=0}
Let~$X_d(a_1,\ldots,a_r)\simeq \PP^{r}\times \PP^{t-1}$ be a Hirzebruch--Kleinschmidt variety with~$a_r=0$. If $r\geq 2$, then we have the disjoint union decomposition
\begin{equation*}
 X_d(a_1,\ldots,a_r)=X_{d-1}(a_1,\ldots,a_{r-1})\sqcup U_{d}(a_1,\ldots,a_{r}).
\end{equation*}
If $r=1$, then we have the  disjoint union decomposition
\begin{equation*}
     X_d(a_1)=\PP^{t-1}_2\sqcup  U_d(a_1).
\end{equation*}
\end{lem}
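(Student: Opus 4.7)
The plan is to exploit the hypothesis $a_r=0$ to collapse the defining equations \eqref{equations HK}, exhibit $X_d(a_1,\ldots,a_r)$ explicitly as $\PP^r\times \PP^{t-1}$, and then read off both cases of the decomposition as the locus $\{z_r=0\}$ and its complement inside this product.

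First, since $0\leq a_1\leq \cdots \leq a_r=0$, every $a_j$ vanishes, and so the equations $x_{mj}y_n^{a_j}=x_{nj}y_m^{a_j}$ reduce to $x_{mj}=x_{nj}$ for all $j\in I_r$ and all $m,n\in I_t$. Hence every row of the matrix $(x_{ij})_{i\in I_t,j\in I_r}$ coincides, and setting $z_j:=x_{1j}=\cdots=x_{tj}$ one obtains an isomorphism
$$\Phi: X_d(a_1,\ldots,a_r)\xrightarrow{\sim} \PP^r\times \PP^{t-1},\quad ([x_0:(x_{ij})],[y_1:\cdots:y_t])\mapsto ([x_0:z_1:\cdots:z_r],[y_1:\cdots:y_t]).$$
Under $\Phi$, the good open subset $U_d(a_1,\ldots,a_r)=\{x_{tr}\neq 0\}$ corresponds to $\{z_r\neq 0\}$, and its complement is the closed subvariety $\{z_r=0\}$.

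Next I treat the two cases separately. For $r\geq 2$, the closed subset $\{z_r=0\}\subset \PP^r\times \PP^{t-1}$ is naturally $\PP^{r-1}\times \PP^{t-1}$, while the same argument applied to $X_{d-1}(a_1,\ldots,a_{r-1})$ identifies it with $\PP^{r-1}\times \PP^{t-1}$ as well. A direct comparison shows that the embedding \eqref{eq embedding smaller HK in X}, taken with $t'=t$ and $r'=r-1$, realises precisely this identification: for $j\leq r-1$ one has $a_j=0$, so $\tilde{x}_{ij}=x_{1j}$, and $\tilde{x}_{ir}=0$ by the ``otherwise'' clause. For $r=1$ we have $X_d(a_1)\simeq \PP^1\times \PP^{t-1}$, and $\{z_1=0\}$ is the fibre $\{[1:0]\}\times \PP^{t-1}$; unwinding \eqref{eq embedding P(t-1) in X} shows this is exactly the image of $\PP^{t-1}_2$. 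In both cases, the two pieces are disjoint by construction, yielding the claimed decompositions.

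There is no substantive obstacle here: once one observes that $a_r=0$ forces every $a_j$ to vanish, everything reduces to two routine set-theoretic identifications in $\PP^r\times \PP^{t-1}$, obtained by unwinding the definitions of the embeddings from Section~\ref{sec decomposition of HK}.
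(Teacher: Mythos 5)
Your proof is correct and takes the same approach as the paper's, which simply writes a point of $X_d(a_1,\ldots,a_r)\simeq \PP^r\times\PP^{t-1}$ as $([x_0:\cdots:x_r],[y_1:\cdots:y_t])$ and splits according to $x_r=0$ versus $x_r\neq 0$. You spell out the isomorphism $\Phi$ explicitly (via $z_j:=x_{1j}=\cdots=x_{tj}$, using that $a_r=0$ forces every $a_j=0$) and verify that the embeddings \eqref{eq embedding smaller HK in X} and \eqref{eq embedding P(t-1) in X} carve out exactly the locus $\{z_r=0\}$; the paper leaves these verifications implicit, so your write-up is a more detailed rendering of the same argument.
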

\begin{proof}
Write~$x\in X_d(a_1,\ldots,a_r)\simeq \PP^{r}\times \PP^{t-1}$ as~$x=([x_0:x_1:\ldots :x_r],[y_1,\ldots,y_t])$. Then, the components in the above decompositions correspond to the cases~$x_r=0$ and~$x_r\neq 0$. This proves the lemma.
\end{proof}

Finally, we state the following lemma regarding the restriction of a height function on a Hirzebruch--Kleinschmidt variety, to each component in the above decompositions.

\begin{lem}\label{lem restriction of heights}
    Given~$L=\gamma h+\xi f\in \Pic(X_d(a_1,\ldots,a_r))$ big, we have the following properties:
\begin{enumerate}
\item For all~$1\leq r'\leq r$, the restriction of~$H_L$ to~$\PP^{r'}_1\simeq \PP^{r'}$ corresponds to~$H_{\PP^{r'}}^\gamma$.
    \item The restriction of~$H_L$ to~$\PP^{t-1}_2\simeq \PP^{t-1}$ corresponds to~$H_{\PP^{t-1}}^\xi$.
    \item For~$1\leq r'\leq r$ and~$2\leq t'\leq t$, the restriction of~$H_L$ to~$ U_{t'+r'-1}(a_1,\ldots,a_{r'})$ corresponds to~$H_{L'}$ where~$L':=\gamma h'+\xi f'$ with~$\{h',f'\}$ the basis of~$\Pic(X_{t'+r'-1}(a_1,\ldots,a_{r'}))$ given in Proposition~\ref{prop effective cone}.
\end{enumerate}
\end{lem}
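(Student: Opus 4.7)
The plan is to perform a direct substitution of each of the three embeddings~\eqref{eq embedding smaller HK in X},~\eqref{eq embedding P(r') in X},~\eqref{eq embedding P(t-1) in X} into the explicit formula~\eqref{eq height function HL} defining $H_L$, and to exploit the elementary fact that $|0|_v = 0$, so that padding a finite set of values with zeros does not affect any local supremum $\sup_i\{|\cdot|_v\}$. Once the two local factors of $H_L$ are matched place-by-place for each $v \in \Val(K)$, the claimed identities follow by taking the product over $\Val(K)$.

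More concretely, for item~(2), under~\eqref{eq embedding P(t-1) in X} the first block of coordinates is $[1:(0)_{i,j}]$, so each local factor corresponding to $\gamma$ collapses to $\sup\{|1|_v,|0|_v\}=1$, while the local factors for $\xi$ are exactly those of $H_{\PP^{t-1}}$; the product yields $H_{\PP^{t-1}}^{\xi}$. For item~(1), under~\eqref{eq embedding P(r') in X} the second block is $[1:0:\ldots:0]$ (contributing $1$ locally), and by inspection the nonzero values among $\{z_0\}\cup \{\tilde{z}_{ij}\}$ are precisely $\{z_0, z_1, \ldots, z_{r'}\}$ (each possibly appearing with repetitions), so the first local supremum equals $\sup_{0\leq k\leq r'}\{|z_k|_v\}$ and the product gives $H_{\PP^{r'}}^{\gamma}$. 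For item~(3), the same reasoning applies to~\eqref{eq embedding smaller HK in X}: every nonzero original coordinate of a point of $X_{t'+r'-1}(a_1,\ldots,a_{r'})$ reappears among the coordinates of the image (with harmless repetitions coming from the indices $j$ having $a_j = 0$, where $\tilde{x}_{ij}=x_{1j}$ for every $i$), and the remaining entries are zero; hence both local suprema in~\eqref{eq height function HL} agree with the suprema defining $H_{L'}$ on the smaller variety for $L' = \gamma h' + \xi f'$.

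There is no substantial obstacle: the proof reduces, for each embedding, to verifying that the multiset of nonzero coordinate values is preserved (up to harmless repetitions), after which the desired identities follow immediately from~\eqref{eq height function HL} and the product formula~\eqref{eq product formula}. The only piece of bookkeeping is a careful reading of the piecewise definition of $\tilde{x}_{ij}$ (respectively $\tilde{z}_{ij}$), which I would record as a short preliminary observation before the case-by-case argument.
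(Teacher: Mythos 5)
Your proof is correct and takes the same route the paper does: direct substitution of the three embeddings into the explicit formula~\eqref{eq height function HL} for $H_L$, noting that padding with zero coordinates leaves the local suprema unchanged. The paper dispatches the lemma with the single sentence that it is a direct consequence of~\eqref{eq height function HL}; you simply spell out the coordinate bookkeeping that that sentence leaves implicit.
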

\begin{proof}
    These statements are a direct consequence of~\eqref{eq height function HL}.
\end{proof}

\subsection{The anticanonical height zeta function - Proof of Theorem~\ref{thm:MainThmAnticanonicalCaseoverFF} and Corollary~\ref{cor:MainThmAnticanonicalCaseoverFF}}\label{sec proof of main thm for anticanonical class}

We now prove Theorem~\ref{thm:MainThmAnticanonicalCaseoverFF} and Corollary~\ref{cor:MainThmAnticanonicalCaseoverFF} from the introduction. We choose~$L=\eta_X^{-1}(-K_X)=\gamma h+\xi f$ with~$\gamma=\frac{r+1}{\eta_X}$ and~$\xi=\frac{t-|\mathbf{a}|}{\eta_X}$ (see Proposition~\ref{prop effective cone}(2)). Hence, $a(L)=A_L=B_L=\eta_X$ and~$b(L)=2$. Assume~$a_r>0$. Then, the absolute convergence, rationality and holomorphicity statements about~$\zeta_{U,-K_X}(s)=\zeta_{U,L}(\eta_X s)$ 
follow directly from Theorem~\ref{thm:ThmGeneralversion}. We also have
$$\lim_{s\to 1}(s-1)^2\zeta_{U,-K_X}(s)=\eta_X^{-2}\lim_{s\to a(L)}(s-a(L))^2\zeta_{U,L}(s)$$
and this value is exactly the one given in~\eqref{eq constant main thm anticanonical}. Moreover, since~$\zeta_{U,L}(s)$ has at most simple poles at each~$s\in \mathcal{A}_L\cup \mathcal{B}_L$, we conclude that~$\zeta_{U,-K_X}(s)$ has at most simple poles on~$\Re(s)=1,s\not \in  \left\{1+\frac{2\pi i m}{\eta_X \log(q)}:m\in \ZZ\right\}$. This proves the result in the case $a_r>0$. Now, assume~$a_r=0$. Then, by Lemmas~\ref{lem decomposition HK over FF case a_r=0} and~\ref{lem restriction of heights} we have
$$\zeta_{U,-K_X}(s)= \left\{ 
\begin{array}{ll}
 \zeta_{X,L}(\eta_X s)-\zeta_{X_{d-1}(a_1,\ldots,a_{r-1}),L'}(\eta_X s)    &  \text{if }r>1,\\
 \zeta_{X,L}(\eta_X s)-\zeta_{\PP^{t-1}}\left(t s\right)   &  \text{if }r=1,
\end{array}
\right.$$
where in the case~$r>1$ we put~$L':=\gamma h'+\xi f'$ with~$\{h',f'\}$ the basis of~$\Pic(X_{d-1}(a_1,\ldots,a_{r-1}))$ given in Proposition~\ref{prop effective cone}. In that case, we apply Theorem~\ref{thm:ThmGeneral when ar=0} to both~$\zeta_{X,L}(s)$ and~$\zeta_{X_{d-1}(a_1,\ldots,a_{r-1}),L'}(s)$ in order to deduce the desired properties of~$\zeta_{U,-K_X}(s)$. Since~$A_{L'}=\frac{r}{r+1}\eta_X$ and~$B_{L'}=\eta_X$, we get~$a(L')=\eta_X$ and~$b(L')=1$. We deduce that both~$\zeta_{X,L}(\eta_X s)$ and~$\zeta_{X_{d-1}(a_1,\ldots,a_{r-1}),L'}(\eta_X s)$ converge absolutely for~$\Re(s)>1$, are rational functions in~$q^{-\eta_X s}$ and~$\zeta_{X_{d-1}(a_1,\ldots,a_{r-1}),L'}(\eta_X s)$ has only simple poles for~$\Re(s)=1$ while~$\zeta_{X,L}(\eta_X s)$ has a double pole at~$s=1$ and at most simple poles for~$\Re(s)=1,s\not \in \left\{1+\frac{2\pi i m}{\eta_X \log(q)}:m\in \ZZ\right\}$. Moreover, we see that~$\zeta_{U,-K_X}(s)$ is holomorphic for
$$\Re(s)>\frac{r}{r+1},\Re(s)\neq 1.$$
Since~$\frac{r}{r+1}\leq 1-\min\left\{\frac{1}{r+1},\frac{1}{t}\right\}$, we conclude that~$\zeta_{U,-K_X}(s)$ satisfies all the desired analytic properties. Additionally, we have
$$\lim_{s\to 1}(s-1)^2\zeta_{U,-K_X}(s)=\lim_{s\to 1}(s-1)^2\zeta_{X,-K_X}(s)=\eta_X^{-2}\lim_{s\to a(L)}(s-a(L))^2\zeta_{X,L}(s)$$
and this values is the one given in~\eqref{eq constant main thm anticanonical}. This proves Theorem~\ref{thm:MainThmAnticanonicalCaseoverFF} in the case~$r\geq 1$. The case~$r=1$ follows from Theorems~\ref{thm:ThmGeneral when ar=0} and~\ref{thm height zeta function projective} in an analogous way. We omit the details for brevity. This completes the proof of Theorem~\ref{thm:MainThmAnticanonicalCaseoverFF}. 

Finally, Corollary~\ref{cor:MainThmAnticanonicalCaseoverFF} is a direct consequence of Theorems~\ref{thm:MainThmAnticanonicalCaseoverFF} and~\ref{thm:tauberianThmFF}.


\section{Hirzebruch surfaces}\label{sec ex Hirzebruch}

In this last section we apply our results to the case of Hirzebruch surfaces~$X:=X_2(a)$ with~$a>0$ an integer, and general~$L=\gamma h+\xi f\in \Pic(X)$ big. In this context, Lemmas~\ref{lem decomposition HK over FF} and~\ref{lem restriction of heights} give
\begin{equation}\label{eq decomposition Hirzebruch}
    X\simeq \PP^1 \sqcup \A^1\sqcup U_2(a),
\end{equation}
with associated height functions~$H_{\PP^1}^{\xi}, H_{\PP^1}^{\gamma}$ and~$H_L$. Moreover, we have
$$A_L=\frac{2}{\gamma}, \quad B_L=\frac{a+2}{\gamma a+\xi}.$$
Using the same notation as in Section~\ref{sec counting rationa points large height}, we get
$$\tilde{N}(U,H_L,q^M)=Q_L(M)q^{a(L)M}+O\left(q^{\delta_1 M}\right) \quad \text{as }M\to \infty,M\in \eta_L\ZZ,$$
for every~$\delta_1>a'(L)$, where~$Q_L(M)=C_1M+C_0(M)$ with~$C_0(M)\in \RR$ depending only on~$L$ and~$M_0$ mod~$\gamma_0(\gamma a +\xi)_0$ and
$$C_1=\frac{q^{4(1-g)} \eta_L h_K^2}{\zeta_K(2)^2(\gamma a+\xi)\gamma (q-1)^2}$$
if~$A_L=B_L$, or~$Q_L(M)>0$ depending only on~$L$ and~$M_0$ mod~$(\gamma a +\xi)_0$ (resp.~mod~$\gamma_0$) if~$A_L<B_L$ (resp.~$A_L>B_L$). 

Now, if~$\xi\leq 0$ then the component~$\PP^1$ in~\eqref{eq decomposition Hirzebruch} has no rational points of large height, and if~$\xi>0$ then we have
$$\tilde{N}(\PP^1,H_{\PP^1}^\xi,q^M)=C_2q^{\frac{2M}{\xi}}+O\left(q^{\delta_2 M}\right) \quad \text{as }M\to \infty,M\in \xi \ZZ,$$
for every~$\delta_2>\frac{1}{2\xi}$, where
$$C_2=\frac{h_Kq^{2(1-g)}}{\zeta_K(2)(q-1)}.$$

Finally, the component~$\A^1$ in~\eqref{eq decomposition Hirzebruch} satisfies
$$\tilde{N}(\A^1,H_{\PP^1}^\gamma,q^M)=C_2q^{\frac{2M}{\gamma}}+O\left(q^{\delta_3 M}\right) \quad \text{as }M\to \infty,M\in \gamma \ZZ,$$
for every~$\delta_3>\frac{1}{2\gamma}$.


In the case~$L=-K_X$, we have~$\gamma=2,\xi=2-a,a(L)=1$ and~$\eta_L=\mathrm{g.c.d.}(2,a)$. As in Example~\ref{ex intro HK threefold}, we can  distinguish two cases:
\begin{enumerate}
    \item If~$a=1$ then~$\eta_L=1$ and
   $$\tilde{N}(X,H_{-K_X},q^M)\sim \tilde{N}(\PP^1,H_{\PP^1},q^M)\sim C_2q^{2M}  \quad \text{as }M\to \infty,M\in \ZZ.$$
   \item If~$a\geq 2$ then
   $$\tilde{N}(X,H_{-K_X},q^M)\sim \tilde{N}(U,H_{-K_X},q^M)\sim C_1Mq^{M}  \quad \text{as }M\to \infty,M\in \eta_L\ZZ.$$
\end{enumerate}


Finally, in order to show an example different from the anticanonical height, let us choose~$a=\gamma=\xi=1$. Then, since~$A_L=2,B_L=\frac{3}{2},a(L)=2,\eta_L=1$, we get
$$\tilde{N}(U,H_L,q^M)\sim C_3q^{2M} \text{ and } \tilde{N}(\PP^1,H_{\PP^1},q^M)\sim\tilde{N}(\A^1,H_{\PP^1},q^M)\sim C_2q^{2M}  \quad \text{as }M\to \infty,M\in \ZZ,$$
where
$$C_3=\frac{ q^{2(1-g)} h_K}{\zeta_K(2)(q-1)}
     \sum_{D\geq 0}\tilde{N}_{1}(D)q^{-3\deg(D)}=\frac{ q^{2(1-g)} h_K \R_K(-1,3)}{\zeta_K(2)\zeta_K(3)(q-1)}.$$
In the special case~$K=\FF_q(T)$ we can use that~$g=0,h_K=1,\R_K(-1,3)=q\zeta_K(2)$ and
$$\zeta_K(s)=\frac{1}{(1-q^{1-s})(1-q^{-s})}$$
(see, e.g., \cite[p.~52]{Ros02}) to compute
\begin{eqnarray*}
    C_2 &=& \frac{q^2-1}{q}, \\
    C_3&=& \frac{(q^2+q+1)(q^2-1)}{q^2}.
\end{eqnarray*}
In particular, $C_3>2C_2$. Hence, in the case of the Hirzebruch surface~$X_2(1)$ over~$K=\FF_q(T)$, with respect to the height function~$H_L$ associated with~$L=h+f\in \Pic(X_2(1))$, our computations show that the component~$U_2(1)$ in~\eqref{eq decomposition Hirzebruch} has asymptotically more rational points of large height than the union~$\PP^1\sqcup \A^1$ of the other two components.

 \bibliographystyle{alpha}
\bibliography{biblio}
\end{document}